\DeclareRobustCommand{\SkipTocEntry}[5]{}
\newtheorem{theorem}{Theorem}[section]
\newtheorem{corollary}[theorem]{Corollary}
\newtheorem{lemma}[theorem]{Lemma}
\newtheorem{proposition}[theorem]{Proposition}
\newtheorem{conjecture}[theorem]{Conjecture}
\newtheorem*{parity}{Parity Conjecture}
\newtheorem*{paritytwists}{Parity Conjecture for Twists}
\theoremstyle{definition}
\newtheorem{definition}[theorem]{Definition}
\newtheorem{example}[theorem]{Example}
\newtheorem{remark}[theorem]{Remark}
\newtheorem{notation}[theorem]{Notation}
\tikzset{join/.code=\tikzset{after node path={%
			\ifx\tikzchainprevious\pgfutil@empty\else(\tikzchainprevious)%
			edge[every join]#1(\tikzchaincurrent)\fi}}}
\tikzset{>=stealth',every on chain/.append style={join},
	every join/.style={->}}
\DeclareSymbolFont{cyrletters}{OT2}{wncyr}{m}{n}
\DeclareMathSymbol{\Sha}{\mathalpha}{cyrletters}{"58}
\title{Root numbers and parity phenomena}
\author{Lilybelle Cowland Kellock}
\address{University College London, London WC1H 0AY, UK}
\email{lilybelle.kellock.20@ucl.ac.uk}
\author{Vladimir Dokchitser}
\address{University College London, London WC1H 0AY, UK}
\email{v.dokchitser@ucl.ac.uk}
\subjclass[2020]{Primary: 11g05, Secondary: 11g10, 11g40}
\begin{document}
	
	\begin{abstract}
		The parity conjecture has a long and distinguished history. It gives a way of predicting the existence of points of infinite order on elliptic curves without having to construct them, and is responsible for a wide range of unexplained arithmetic phenomena. It is one of the main consequences of the Birch and Swinnerton-Dyer conjecture and lets one calculate the parity of the rank of an elliptic curve using root numbers. In this handbook, we explain how to use local root numbers of elliptic curves to realise some of these phenomena, with an emphasis on explicit calculations. The text is aimed at a `user' and, as such, we will not be concerned with the proofs of known cases of the parity conjecture, but instead we will demonstrate the use of the theory by means of examples.
	\end{abstract}

	\maketitle
	
	\tableofcontents
	
	\section{Introduction}
	
	\subsection{The parity conjecture}
	
	The Mordell--Weil rank of an elliptic curve $E$ over a number field $K$ remains mysterious. Calculating it is generally either hard or impossible, and there is currently no method that will provably work for every curve. All we have is ways of finding upper bounds by calculating the rank of Selmer groups, lower bounds by searching for $K$-rational points, and fingers to cross that these coincide. In general, it can be hard to find out if an elliptic curve has even one $K$-rational point of infinite order.
	
	The parity conjecture predicts points of infinite order and most known phenomena about elliptic curves effortlessly, after a simple calculation. It quantifies how the behaviour of $E$ over completions of $K$ should control the parity of the rank of $E/K$. It does this using the \textit{global root number of $E/K$}, $w(E/K)\in\{\pm 1\}$, which is defined as the product of easily computable local factors at each place $v$ of $K$, \textit{local root numbers}. They are straightforward to compute since, despite having a non-constructive definition, they have been classified for all places (see \S \ref{rootnossection}). 
	
	\begin{parity}
		Let $E$ be an elliptic curve over a number field $K$. Then 
		\begin{equation}
			(-1)^{\emph{\text{rk}}(E/K)}=w(E/K),
		\end{equation}
		where $w(E/K)$ is the global root number of $E/K$. 
	\end{parity}
	
	Note that if $w(E/K)=-1$, the parity conjecture implies that the rank is odd and, in particular, greater than $0$. The parity conjecture is often the only way of predicting that a point of infinite order exists on an arbitrary elliptic curve. For now, take for granted that if $E$ is a semistable elliptic curve defined over $K$, then 
	\begin{equation}
		w(E/K)=(-1)^{m+u},
	\end{equation}
	where $m$ is the number of primes of $K$ where $E$ has split multiplicative reduction and $u$ is the number of infinite places of $K$ (see Corollary \ref{semistable}).

	\begin{example}\label{999}
		Let us take
		\begin{equation}
			E:y^2 - 23y = x^3 - 99997x^2 - 17x + 42. 
		\end{equation}
		Magma \cite{magma} tells us that the rank of $E/\mathbb{Q}$ is $0$, so that $E$ does not have a rational point of infinite order. However, it also returns \texttt{false}, indicating that it has not proved that $0$ is the rank of the curve\footnote{Of course, we could have done more extensive calculations; Magma only searches for points which have height below $15$ and performs only $2$- and $4$-descent by default.}. With a simple calculation, the parity conjecture tells us that this curve should have a point of infinite order. Indeed, we have $\Delta_E=17\cdot 655943686625481101$ and non-split multiplicative reduction at both primes. Hence, since $\mathbb{Q}$ has one infinite place, the above result tells us that $w(E/\mathbb{Q})=(-1)^1=-1$ and so the parity conjecture implies that the rank of $E/\mathbb{Q}$ is odd and that $E$ has a point of infinite order.
	\end{example}

	One could use the conjecture of Birch and Swinnerton-Dyer to calculate Mordell--Weil ranks of elliptic curves. However, utilising the Birch--Swinnerton-Dyer conjecture requires computing $L$-functions, and, not only is this a rather lengthy calculation, we cannot provably compute $\text{ord}_{s=1}L(E/\mathbb{Q},s)$ unless it is $0$, $1$, $2$ or $3$, nor do we know the analytic continuation of $L$-functions of elliptic curves over general number fields. The beauty of the parity conjecture is that it is free of the conjectural machinery of $L$-functions and is an entirely arithmetical statement, yet it still allows us to predict the existence of points of infinite order. 
	
	In terms of applications, parity-related arguments have many notable consequences. For example, a known case of the $p$-parity conjecture\footnote{Namely that $\dim_{\mathbb{F}_p}\textup{Sel}_p(E/\mathbb{Q})-\dim_{\mathbb{F}_p}E(\mathbb{Q})[p]$ is even if and only if $w(E/\mathbb{Q})=+1$.} was used by Bhargava and Shankar to prove that a positive proportion of elliptic curves over $\mathbb{Q}$ satisfy the Birch--Swinnerton-Dyer conjecture \cite{bhargava}. The parity conjecture settles most cases of the congruent number problem (see \S \ref{congnumberprob}) and has been used to prove many results about the ranks of families of elliptic curves \cite{desjardins,helfgott,rizzo,rohrlichfamilies}. A variant on the $2$-parity conjecture was used to prove that Hilbert's Tenth Problem has a negative answer over rings of integers of number fields \cite{hilbert10} and results on the parity of Selmer ranks were used to prove that certain Kummer surfaces satisfy the Hasse principle \cite{skoro}. Similar arguments have also been used to prove results on class numbers of e.g. simplest cubic fields \cite{washington}. Root numbers also seem to have played a role in the early investigation of Heegner points, see e.g. \cite{gross} Chapter 5, Corollary 21.3 and Conjecture 23.2.

	The parity conjecture is easily deduced from the Birch--Swinnerton-Dyer conjecture together with the Hasse--Weil conjecture. The latter states that the completed $L$-function has analytic continuation to the whole of the complex plane and satisfies the functional equation $\Hat{L}(E/K,s)=w(E/K)\Hat{L}(E/K,2-s)$, where $w(E/K)$ is the global root number. Assuming the Hasse-Weil conjecture, $w(E/K)=+1$ if and only if the completed $L$-function is symmetric around $s=1$, which happens if and only if its order of vanishing at $s=1$ is even. This tells us that
	\begin{equation}
		w(E/K)=(-1)^{\text{ord}_{s=1}\Hat{L}(E/K,s)},
	\end{equation}
	which, assuming the Birch--Swinnerton-Dyer conjecture, gives the parity conjecture.

	\subsection{A historical note}\label{selmerstuff}
	The investigation of parity-type questions appears to have begun with calculations of the parity of ranks of Selmer groups. In his 1954 paper `A conjecture concerning rational points on cubic curves' \cite{selmer}, Selmer gave an explicit definition of the first and second descent for $E/\mathbb{Q}$ with a rational $2$-isogeny by writing out the equations they lead to. Selmer wrote that he had performed numerical calculations that suggest ``when the first descent indicated at most three generators, then none or two of these seem to be excluded by the second descent'' (at most three as this was the case for all curves he numerically tested). Based on this, he conjectured that
	
	\begin{center}
		1. \textit{``The second descent excludes an even number of generators''}.
	\end{center}
	
	In his 1962 paper `Arithmetic on curves of genus 1, III, The Tate–Šafarevič
	and Selmer groups' \cite{cassels2}, where he coined the term `Selmer group', Cassels extended Selmer's conjecture and proved that the dimension of the image of $\text{Sel}_{p^n}(E/K)$ under the map induced by multiplication by $p^{n-1}$ differs from that of $\text{Sel}_p(E/K)$, considered as a vector space over $\mathbb{F}_p$, by an even integer. This also treats the case of third descent, fourth descent, and so on. In \cite{selmer}, Selmer also conjectured the stronger statement, that
	
	\begin{center}
		2. \textit{``The number of generators indicated by a first descent differs from the true number of generators by an even number''}.  
	\end{center}
	
	Extending Selmer's second conjecture to general $p$, as Cassels did for Selmer's first conjecture, it says that the rank of the $p$-Selmer group determines the parity of the Mordell--Weil rank and is equivalent to saying that $\delta_p$ is even, where $\delta_p$ is the multiplicity of $\mathbb{Q}_p/\mathbb{Z}_p$ in $\Sha(E/K)[p^\infty]$ and is conjecturally $0$. In light of the parity conjecture and Selmer's second conjecture, we would expect that 
	\begin{equation}
		(-1)^{\text{rk}_p(E/K)}=w(E/K),
	\end{equation}
	where $\text{rk}_p(E/K)=\text{rk}(E/K)+\delta_p$ is the $\mathbb{Z}_p$-corank of the $p$-infinity Selmer group $\lim_{n\to \infty} \text{Sel}_{p^n}(E/K)$. This is known as the \textit{$p$-parity conjecture}. Although we will not discuss it or its applications in this handbook, tackling the $p$-parity conjecture has proved to be more fruitful than tackling the parity conjecture itself and there have been numerous works on the subject (see \S \ref{using} for references). In this context, one can prove parity-related statements without assuming the finiteness of the Tate--Shafarevich group (the fabled Shafarevich--Tate conjecture), while the parity conjecture itself appears to be, as of now, unapproachable without this assumption.

	\subsection{Using this handbook}\label{using}
	
	This exposition is designed to explain how to calculate local and global root numbers of elliptic curves, and give a taste of what sorts of phenomena are predicted by the parity conjecture. The latter are concerned with the existence of rational points on elliptic curves for which there is as yet no construction, and that remain open problems without assuming the parity conjecture. As of now, the only known systematic construction of points of infinite order are Heegner points and self-points \cite{self1,self2}, both using the modular parameterisation. We hope that our examples might inspire some new ideas for the construction of rational points. 
	
	In \S \ref{rootnossection}, we describe how to calculate local and global root numbers of elliptic curves, along with root numbers of quadratic twists of elliptic curves and twists of elliptic curves by self-dual Artin representations. We also give a first introduction as to how to use these to predict the existence of points of infinite order on elliptic curves. In \S \ref{examples}, we illustrate how the parity conjecture predicts many unexplained arithmetic phenomena about rational points on elliptic curves. For a quick overview of these, see the table of contents. In \S \ref{minconjtwists}, we propose an analogue of the minimalist conjecture for Artin twists of elliptic curves over $\mathbb{Q}$, and discuss its consequences. The conjecture gives the expected Galois module structure of $E(F)\otimes_\mathbb{Z}\mathbb{C}$, for a fixed Galois extension $F/\mathbb{Q}$ and a generic elliptic curve $E/\mathbb{Q}$.
	
	The parity conjecture is known assuming the finiteness of the Tate--Shafarevich group \cite{bsdmodsq,monsky}, and it is also known that these approaches to controlling the parity of the rank do not extend to controlling the Mordell--Weil rank modulo $n$ for $n>2$ \cite{modn}. We will not be concerned with approaches to the proof of the parity conjecture; for a self-contained exposition on the proof of the parity conjecture assuming the finiteness of the Tate--Shafarevich group (and of the proof of $p$-parity conjecture for elliptic curves over $\mathbb{Q}$) see \cite{timparity}. We will confine ourselves to the setting of elliptic curves, although the parity conjecture is formulated for abelian varieties. For some recent results on these conjectures see \cite{cesna,coates,pparity,finsha,regconst,bsdmodsq,abelianvar,green,kim,adam,nekovar1,nekovar2,nekovar3,nekovar4}; for results on root numbers of abelian varieties see \cite{bisatt,sabitova}.

	\subsection{Notation}
	We will use the following notation.
	\begin{align}
		& E && \text{ an elliptic curve}; \\
		& K && \text{ a number field}; \\
		& \Delta_K && \text{ the discriminant of $K$} ; \\
		& v && \text{ a place of $K$}; \\
		& K_{v} && \text{ the completion of $K$ at a place $v$}; \\
		& \langle\cdot,\cdot\rangle && \text{ the usual inner product of (characters of) representations}; \\
		& \Delta_E && \text{ the discriminant of a given Weierstrass equation for $E$}; \\
		& c_4,c_6 && \text{ the usual invariants of an elliptic curve (see \cite{silverman} p.46)}; \\
		& N_E && \text{ the conductor of $E$}; \\
		& E_d && \text{ the quadratic twist of $E$ by } d\in {K^\times}/{K^ {\times 2}}; \\
		& w(E/\mathcal{K}) && \text{ the local root number of $E$ over a local field $\mathcal{K}$ (see \S \ref{localrootnos})}; \\
		& w(E/K) && \text{ the global root number of $E/K$ (see \S \ref{globalrootnos})}; \\
		& w(E/K,\rho) && \text{ the global root number of $E/K$ twisted by an Artin representation $\rho$ (see \S \ref{paritytwists})}.
	\end{align}
	Most curves will be labelled with their Cremona label, if they have one. 
	
	\addtocontents{toc}{\SkipTocEntry}
	\section*{Acknowledgements}
	
The authors would like to thank James Bell, Kestutis Česnavičius, John Cremona, Julie Desjardins, Tim Dokchitser, Zev Klagsbrun, Chao Li, Rachel Newton, Ariel Pacetti, Ross Paterson, Ottavio Rizzo, David Rohrlich, Mohammad Sadek and Alexei Skorobogatov for useful discussions and comments. The second author is very grateful to the late John Coates for his interest, encouragement and discussions on the topic of explicit arithmetic phenomena of elliptic curves.

The first author was supported by the Engineering and Physical Sciences Research Council [EP/L015234/1], the EPSRC Centre for Doctoral Training in Geometry and Number Theory (The London School of Geometry and Number Theory) and University College London. The second author was supported by a Royal Society University Research Fellowship.
	
		\section{Root numbers}\label{rootnossection}
	
	\subsection{The global root number}\label{globalrootnos}
	
	\begin{definition}
		Let $E$ be an elliptic curve over a number field $K$. The \textit{global root number} $w(E/K)\in \{\pm 1\}$ is defined as the product of \textit{local root numbers} $w(E/K_v)\in\{\pm 1\}$,
		\begin{equation}
			w(E/K)=\prod_{v} w(E/K_v),
		\end{equation}
		where the product runs over all places $v$ of $K$, including infinite ones.
	\end{definition}
	
	Local root numbers of elliptic curves are defined using epsilon-factors of Weil--Deligne representations (see \cite{deligne} for the original definitions and \cite{timparity} Section 3.3 for a more down-to-earth introduction). For a finite place $v$, the local root number is computed by looking at the action of the absolute Galois group of $K_v$ on the $\ell$-adic Tate module. For the purpose of this exposition, we will not concern ourselves with formal definitions, but will focus on how to calculate local root numbers in practice.
	
	\begin{remark}
		For an elliptic curve defined over $\mathbb{Q}$, the local root number at a prime $p$ agrees with the eigenvalue of the associated Atkin-Lehner involution for the associated modular form. This follows from the corresponding statement for modular forms (see \cite{schmidt} Theorem 3.2.2, for example) together with the local Langlands conjecture for $\text{GL}_2$ and the modularity of elliptic curves over $\mathbb{Q}$.
	\end{remark}

	\subsection{How to calculate local root numbers}\label{localrootnos}
	Local root numbers of elliptic curves $E/K$ have been classified for all places $v$ of $K$, so, for most purposes, one can avoid the technical definitions. 
	
	\begin{theorem}\label{rootnos}
		Let $E$ be an elliptic curve over a local field $\mathcal{K}$ of characteristic zero. When $\mathcal{K}$ is non-Archimedean, let $k$ be its residue field and let $v:\mathcal{K}^\times\twoheadrightarrow \mathbb{Z}$ denote the normalised valuation with respect to~$\mathcal{K}$. Let $\left(\frac{*}{k}\right)$ denote the quadratic residue symbol on $k^\times$ and $(a,b)_\mathcal{K}$ denote the Hilbert symbol in $\mathcal{K}$. 
		\begin{enumerate}[(i)]
			\item If $\mathcal{K}$ is Archimedean then $w(E/\mathcal{K})=-1$;
			\item If $E/\mathcal{K}$ has good reduction then $w(E/\mathcal{K})=+1$;
			\item If $E/\mathcal{K}$ has split multiplicative reduction then $w(E/\mathcal{K})=-1$;
			\item If $E/\mathcal{K}$ has non-split multiplicative reduction then $w(E/\mathcal{K})=+1$;
			\item If $E/\mathcal{K}$ has additive, potentially multiplicative reduction and $\textup{char}(k)\geq 3$ then $w(E/\mathcal{K})=\big(\frac{-1}{k}\big)$. 
			\item If $E/\mathcal{K}$ has additive, potentially multiplicative reduction and $\textup{char}(k)=2$ then $w(E/\mathcal{K})=(-1,-c_6)_\mathcal{K}$. In particular, if $\mathcal{K}=\mathbb{Q}_2$ then
			\begin{equation}
				w(E/\mathbb{Q}_2)=\begin{cases}
					-1 &\text{ if } c_6'\equiv 1\pmod{4}; \\
					+1 &\text{ if } c_6'\equiv 3\pmod{4},
				\end{cases}
			\end{equation}
			where $c_6'=c_6/2^{v(c_6)}$.
			\item If $E/\mathcal{K}$ has additive, potentially good reduction and $\textup{char}(k)\geq 5$ then $w(E/\mathcal{K})=(-1)^{\big\lfloor \frac{v(\Delta) \mid k\mid}{12}\big\rfloor}$, where $\Delta$ is the minimal discriminant of $E$. 
			\item If $E/\mathcal{K}$ has additive, potentially good reduction and $\textup{char}(k)\geq 3$, let $y^2=x^3+ax^2+bx+c$ be a Weierstrass equation for $E$ and let $\Delta_E$ be the discriminant of this model.
			\begin{itemize}
				\item If the Kodaira type\footnote{Recall that the Kodaira type of $E/\mathcal{K}$ can be calculated using Tate's algorithm (see \cite{adsilverman}, Ch. IV \S 9).} is $\rm{I}_0^*$, then
				\begin{equation}
					w(E/\mathcal{K})=\Big(\frac{-1}{k}\Big);
				\end{equation}
				\item If the Kodaira type is $\rm{III}$ or $\rm{III}^*$, then
				\begin{equation}
					w(E/\mathcal{K})=\Big(\frac{-2}{k}\Big);
				\end{equation}
				\item If the Kodaira type is $\rm{II}$, $\rm{IV}$, $\rm{II}^*$ or $\rm{IV}^*$, then there exists a Weierstrass equation for which $3\nmid v(c)$. For such an equation, we have
				\begin{equation}
					w(E/\mathcal{K})=\delta\cdot (\Delta_E,c)_\mathcal{K}\cdot\Big(\frac{v(c)}{k}\Big)^{v(\Delta_E)}\cdot\Big(\frac{-1}{k}\Big)^{\frac{v(\Delta_E)(v(\Delta_E)-1)}{2}}, 
				\end{equation}
				where $\delta\in\{\pm 1\}$ and $\delta=1$ if and only if $\sqrt{\Delta_E}\in \mathcal{K}$. 
			\end{itemize}
		\end{enumerate}
	\end{theorem}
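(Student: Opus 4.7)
The plan is to unravel the definition of the local root number as the $\varepsilon$-factor of the Weil--Deligne representation $\sigma_E$ attached to $E$ on the dual of the $\ell$-adic Tate module (for $\ell\ne\operatorname{char} k$), and then chase through the standard classification of the reduction types. The structure of $\sigma_E$ in each case is well known from the theory of N\'eron models (good reduction $\Leftrightarrow$ $\sigma_E$ unramified; multiplicative $\Leftrightarrow$ $\sigma_E$ is a Steinberg-type representation with nontrivial monodromy; additive potentially good $\Leftrightarrow$ $\sigma_E$ has finite image and trivial monodromy). From there, the root number splits into a small inventory of archetypes and one just needs to read off the $\varepsilon$-factor of each, using multiplicativity $w(\sigma\otimes\chi)=w(\sigma)w(\chi)^{\dim\sigma}\chi(\mathfrak{c})$ where convenient.

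The easy strata are (i)--(iv). For (i), the WD representation is the standard $2$-dimensional representation of the Weil group of $\mathbb{R}$ or $\mathbb{C}$ coming from $H^1(E(\mathbb{C}),\mathbb{Z})$, and Deligne's formulas give $w=-1$ in both cases. For (ii), $\sigma_E$ is unramified of dimension $2$ with trivial conductor, hence $w(\sigma_E)=+1$. For (iii) and (iv), $\sigma_E$ is the Steinberg representation ${\rm sp}(2)$ possibly twisted by the unique unramified quadratic character $\eta$; the root numbers $w({\rm sp}(2))=-1$ and $w({\rm sp}(2)\otimes\eta)=-\eta(\varpi)=+1$ are immediate from the definition.

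The additive potentially multiplicative cases (v), (vi) are handled by writing $E$ as a quadratic twist of a curve $E'/\mathcal{K}$ with multiplicative reduction by a \emph{ramified} quadratic character $\chi$. Multiplicativity gives $w(E/\mathcal{K})=w(E'/\mathcal{K})\cdot\chi(N_{E'})$ times signs depending on whether the twist flips split vs.\ non-split, and the claimed formulas reduce to Hilbert symbol identities: for $\operatorname{char}k\ne2$, the quadratic Gauss sum yields $w(\chi)^2=\big(\tfrac{-1}{k}\big)$, while for $\operatorname{char}k=2$ the relevant computation is the local Hilbert symbol $(-1,-c_6)_\mathcal{K}$, specialising over $\mathbb{Q}_2$ to the stated congruence on $c_6'$ by the classical supplementary law.

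The deepest case is (vii)--(viii), where $\sigma_E$ factors through a finite quotient of the inertia group and the root number is essentially that of an induced character. For (vii), the assumption $\operatorname{char}k\geq 5$ makes the inertia image tame cyclic of order dividing $12$, and the root number of the induced character from the totally ramified extension of degree $v(\Delta)|k|/12$ works out to $(-1)^{\lfloor v(\Delta)|k|/12\rfloor}$ by a direct Gauss-sum computation, reproducing Rohrlich's formula. Case (viii) is the real obstacle: in residue characteristic $3$ there is still wild ramification of inertia for Kodaira types ${\rm II},{\rm IV},{\rm II}^*,{\rm IV}^*$ (whose inertia image contains an element of order dividing $12$ with $3$-part). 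Here the strategy is to normalise a Weierstrass model so that $3\nmid v(c)$, use Tate's algorithm to read off the structure of the inertia representation, and then compute $\varepsilon$ by decomposing $\sigma_E$ as an induction from a ramified quadratic extension. The sign ambiguity $\delta=\pm 1$, tracking whether $\sqrt{\Delta_E}\in\mathcal{K}$, arises precisely from the two possible quadratic extensions where $\sigma_E$ becomes principal series, and the Hilbert symbol $(\Delta_E,c)_\mathcal{K}$ together with the residue-symbol factors assembles the final sign. The main obstacle throughout is keeping the normalisations of $\varepsilon$-factors and additive characters consistent; the cleanest route, which I would follow, is to quote Rohrlich's tables in residue characteristic $\geq 5$, Halberstadt and the Dokchitser brothers' explicit computations in residue characteristic $3$, and Kobayashi's $c_6$-formula in residue characteristic $2$.
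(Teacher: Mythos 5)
The paper does not prove Theorem~\ref{rootnos}; immediately after the statement it attributes the pieces to the literature case by case: the semistable cases (ii)--(iv) to Deligne's definitions, parts (ii)--(vii) to Rohrlich (with the $\mathcal{K}=\mathbb{Q}_2$ instance of (vi) also in Connell), and part (viii) to Kobayashi's Theorem~1.1. Your ``cleanest route'' of quoting the literature is therefore exactly what the paper does, and the preceding sketch of the structure of the Weil--Deligne representation in each stratum is essentially the standard roadmap those references follow. So in spirit the approach matches.

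However, your attributions at the end are wrong in ways that matter: a reader who followed them would not find the results this theorem needs. Kobayashi's contribution is part (viii) --- additive potentially good reduction in residue characteristic $\geq 3$, the new content being the wild inertia in residue characteristic $3$ for Kodaira types $\mathrm{II},\mathrm{IV},\mathrm{II}^*,\mathrm{IV}^*$ --- it is not a ``$c_6$-formula in residue characteristic $2$.'' The $(-1,-c_6)_\mathcal{K}$ formula in residue characteristic $2$ is part (vi), additive potentially \emph{multiplicative} reduction, and is due to Rohrlich (Connell for $\mathbb{Q}_2$). Rohrlich's work is not confined to residue characteristic $\geq 5$: it also covers (v) in characteristic $\geq 3$ and (vi) in characteristic $2$. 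And Halberstadt's tables together with the Dokchitser brothers' residue-characteristic-$2$ paper are not ingredients of Theorem~\ref{rootnos} at all; they address the additive potentially good case in residue characteristic $2$, which the theorem deliberately omits (the paper relegates that to Appendix~\ref{rootnosat2table}). There are also smaller imprecisions in the sketch --- in (vii), $v(\Delta)|k|/12$ is in general not an integer and cannot be the degree of an extension; the inertia image is cyclic of order $12/\gcd(v(\Delta),12)$, and $\lfloor v(\Delta)|k|/12\rfloor$ is the \emph{output} of the Gauss-sum computation, not the degree you induce from. None of this changes the overall verdict that your macro-strategy agrees with the paper's, but the reference map needs to be corrected before it could be used.
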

	The semistable case follows from the definitions in \cite{deligne}, Rohrlich proved $(ii)$--$(vii)$ in \cite{rohrlich2} \S19 and \cite{rohrlich} Theorem 2 (the case of $\mathcal{K}=\mathbb{Q}_2$ in $(vi)$ can be found in \cite{connell} \S3), and $(viii)$ is a result of Kobayashi \cite{kobayashi} Theorem 1.1. The statements of $(i)$--$(v)$ and $(vii)$, as they are written, are found in \cite{bsdmodsq} Theorem 3.1. For elliptic curves over local fields of positive characteristic, there is a similar classification of root numbers in \cite{conrad} Theorem 3.1.
	
	Theorem \ref{rootnos} does not tell us how to calculate local root numbers when $E$ has additive potentially good reduction over a local field of residue characteristic $2$. For $E/\mathbb{Q}_2$, one can use a classification of Halberstadt \cite{halberstadt}, which was extended to non-minimal Weierstrass models and potentially multiplicative reduction by Rizzo (see \cite{rizzo} Table III). The tables that classify the local root number $w(E/\mathbb{Q}_2)$ can be found in Appendix~\ref{rootnosat2table}. Generalising Rizzo's and Halberstadt's tables to extensions of $\mathbb{Q}_2$ seems unfeasible as there would be many cases to check for large extensions. However, in the case of potentially good reduction over an extension of $\mathbb{Q}_2$, the local root number can be described in terms of $\text{Gal}(\mathcal{K}(E[3])/\mathcal{K})\subset \text{GL}(2,\mathbb{F}_3)$, see \cite{rootnos2}. There is also a formula (\cite{finsha} Theorem 1.12) for local root numbers in terms of the Tamagawa number of $E$ and the curves that are $2$-isogenous to $E$ over subfields of $\mathcal{K}(E[2])$; the Tamagawa numbers can be read off from Tate's algorithm \cite{tatealgorithm} (or see \cite{adsilverman} p. 365).
	
	\begin{corollary}\label{qweierstrass}
		Let $E$ be an elliptic curve over a local field $\mathcal{K}$ of characteristic zero and residue characteristic $p\geq 5$. Let $k$ denote the residue field of $\mathcal{K}$ and let $\left(\frac{*}{k}\right)$ denote the quadratic residue symbol on $k$. Then 
		\begin{equation}
			w(E/\mathbb{Q}_p) = \begin{cases}
				1 & \text{ if $E/\mathcal{K}$ has Kodaira type $\rm{I}_0$}; \\
				\big(\frac{-1}{k}\big) & \text{ if $E/\mathcal{K}$ has type $\rm{II}$, $\rm{II}^*$, ${\rm{I}}_n^*$ or ${\rm{I}}_0^*$}; \\
				\big(\frac{-2}{k}\big) & \text{ if $E/\mathcal{K}$ has type $\rm{III}$ or $\rm{III}^*$};\\
				\big(\frac{-3}{k}\big) & \text{ if $E/\mathcal{K}$ has type $\rm{IV}$ or $\rm{IV}^*$}; \\
				-\big(\frac{6b}{k}\big) & \text{ if $E/\mathcal{K}$ has type ${\rm{I}}_n$}, 
			\end{cases}
		\end{equation}
		where $E$ is given by a Weierstrass equation $y^2=x^3+ax+b$, with $a,b\in\mathcal{O}_\mathcal{K}$.
	\end{corollary}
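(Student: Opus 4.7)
The plan is to derive the corollary case-by-case from Theorem~\ref{rootnos}, splitting according to the Kodaira type of $E/\mathcal{K}$. Good reduction $\mathrm{I}_0$ is immediate from part (ii), and for additive potentially multiplicative types $\mathrm{I}_n^*$ ($n \geq 1$), part (v) gives $w = (-1/k)$ directly, since $\mathrm{char}(k) \geq 5$.

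The first substantive step is the multiplicative case $\mathrm{I}_n$ ($n \geq 1$), where one must decide split versus non-split from the Weierstrass coefficients. Since $\mathrm{char}(k) \geq 5$, multiplicative reduction forces $v(c_4) = 0$, i.e.\ $a \in \mathcal{O}_\mathcal{K}^\times$, and then $\bar{\Delta} = 0$ together with $\bar a \neq 0$ forces $b \in \mathcal{O}_\mathcal{K}^\times$ as well (otherwise the reduced cubic would have a triple root). Write the reduced cubic $\bar f(x) = x^3 + \bar a x + \bar b$ as $(x - \alpha)^2(x - \beta)$; then $\beta = -2\alpha$ and $\alpha^3 = \bar b/2$. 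The tangent lines to the reduced curve at the node $(\alpha, 0)$ have slopes $\pm \sqrt{\alpha - \beta} = \pm \sqrt{3\alpha}$, so the reduction is split exactly when $3\alpha$ is a square in $k^\times$. Using $(\alpha/k) = (\alpha^3/k) = (\bar b/2/k)$, this is equivalent to $(6b/k) = 1$, and parts (iii)--(iv) of Theorem~\ref{rootnos} give $w(E/\mathcal{K}) = -(6b/k)$.

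The remaining additive potentially good types $\mathrm{II}, \mathrm{III}, \mathrm{IV}, \mathrm{I}_0^*, \mathrm{IV}^*, \mathrm{III}^*, \mathrm{II}^*$, with minimal discriminant valuations $v(\Delta) = 2, 3, 4, 6, 8, 9, 10$ respectively, are all handled by part (vii): $w = (-1)^{\lfloor v(\Delta)|k|/12 \rfloor}$. Since $p \geq 5$, $\gcd(|k|, 12) = 1$, so the parity of $\lfloor v(\Delta)|k|/12 \rfloor$ depends only on $|k| \bmod 12$. The claimed Legendre values $(-1/k)$, $(-2/k)$, $(-3/k)$ likewise depend only on $|k| \bmod 12$ via the supplementary laws of quadratic reciprocity, so matching the two sides for each of the seven types reduces to a finite tabulation over the four residue classes $|k| \equiv 1, 5, 7, 11 \pmod{12}$.

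The proof is conceptually straightforward once Theorem~\ref{rootnos} is in hand; the only computation requiring any thought is the split-vs.-non-split criterion for multiplicative reduction. The remaining work, rather than presenting a genuine obstacle, consists of the seven floor-parity verifications in the potentially good case, all of which are routine once the parity of $\lfloor v(\Delta)|k|/12 \rfloor$ is written as a function of $|k| \bmod 12$.
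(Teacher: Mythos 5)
Your proposal is correct and follows essentially the same strategy as the paper: the types $\mathrm{I}_0$ and $\mathrm{I}_n^*$ are read off directly, the split/non-split criterion settles $\mathrm{I}_n$, and the potentially good types are reduced to the parity of $\lfloor v(\Delta)\lvert k\rvert/12\rfloor$ as a function of $\lvert k\rvert\bmod 12$. The paper differs slightly in that it reads $\mathrm{I}_0^*$, $\mathrm{III}$ and $\mathrm{III}^*$ straight off Theorem~\ref{rootnos}(viii) and reserves (vii) for $\mathrm{II},\mathrm{II}^*,\mathrm{IV},\mathrm{IV}^*$ only, whereas you route all seven potentially good types uniformly through (vii); both are fine, and yours is marginally more uniform. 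Your $\mathrm{I}_n$ computation is a cleaner rearrangement of the paper's: you use $\alpha-\beta=3\alpha$ together with $\left(\frac{\alpha}{k}\right)=\left(\frac{\alpha^3}{k}\right)=\left(\frac{\bar b/2}{k}\right)$, while the paper verifies $4\alpha^2(\alpha-\beta)=6\bar b$ directly. Same content.

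Two small issues worth flagging. First, the parenthetical ``otherwise the reduced cubic would have a triple root'' is not the right reason: if $\bar a\neq 0$ and $\bar b=0$ then $\bar f(x)=x(x^2+\bar a)$ has three distinct roots, so $\bar\Delta\neq 0$; the correct reason $\bar b\neq 0$ holds is simply that $\bar\Delta=-16(4\bar a^3+27\bar b^2)=0$ with $\bar a\neq 0$ forces $\bar b\neq 0$. Your conclusion is unaffected. Second, ``multiplicative reduction forces $v(c_4)=0$'' is only true for a minimal model, which you use tacitly; the paper makes this assumption explicit (``We may assume the Weierstrass equation for $E$ is minimal, since a change of model changes the value of $b$ by a $6$-th power''). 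Since the corollary statement does not itself require minimality, it would be worth stating the reduction-to-minimal step, noting that $b\mapsto u^{-6}b$ under a model change, so $\left(\frac{6b}{k}\right)$ is unchanged provided $u$ is a unit (i.e.\ provided the given model is already minimal up to unit scaling).
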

	
	\begin{proof}
		For $\mathcal{K}=\mathbb{Q}_p$ this was proved in \cite{rohrlichfamilies} Proposition 2. If $E/\mathcal{K}$ has Kodaira type $\textup{I}_0$, $\textup{I}_0^*$, $\textup{I}_n^*$, $\textup{III}$ or $\rm{III}^*$, these expressions follow immediately from the theorem. If $E/\mathcal{K}$ has type $\textup{I}_n$, the root number depends on whether the reduction type is split or non-split. We may assume the Weierstrass equation for $E$ is minimal, since a change of model changes the value of $b$ by a $6$-th power. Write
		\begin{equation}
			\Bar{E}:y^2=(x-\alpha)^2(x-\beta) \pmod{m_{\mathcal{K}}}
		\end{equation}
		for the reduction of $E$ modulo the maximal ideal of $\mathcal{K}$. $E/\mathcal{K}$ has split multiplicative reduction if and only if $\alpha-\beta\in k^{\times 2}$. Since $-\alpha^2\beta=b$ and $2\alpha+\beta=0$ in $k$, we have
		
		\begin{equation}
			4\alpha^2(\alpha-\beta)= 4(\alpha^3+b)=4\left(-\frac{\alpha\cdot b}{ \beta}+b\right)=4\left( \frac{b}{2}+b\right)=6b,
		\end{equation} 
		so that $\left(\frac{\alpha-\beta}{k}\right)=\left(\frac{6b}{k}\right)$ and $w(E/\mathcal{K})=-\left(\frac{6b}{k}\right)$.
		
		For the remaining cases of potentially good reduction,  we show how the corollary follows from Theorem \ref{rootnos}$(vii)$. If $E/\mathcal{K}$ has type $\rm{II}$ then $v(\Delta)=2$ so 
		\begin{equation}
			w(E/\mathcal{K})=(-1)^{\big\lfloor\frac{|k|}{6} \big\rfloor}=\begin{cases}
				+1 &\text{ if } |k|\equiv 1 \text{ or } 5\pmod{12} \\
				-1 &\text{ if } |k|\equiv 7\text{ or } 11\pmod{12}
			\end{cases}=\left( \frac{-1}{k} \right).  
		\end{equation}
		A similar argument shows that if $E/\mathcal{K}$ has type $\rm{II}^*$, $w(E/\mathcal{K})=\left( \frac{-1}{k} \right)$. If $E/\mathcal{K}$ has type $\rm{IV}$, 
		\begin{equation}
			w(E/\mathcal{K})=(-1)^{\big\lfloor\frac{|k|}{3} \big\rfloor}=\begin{cases}
				+1 &\text{ if } |k|\equiv 1 \pmod{6} \\
				-1 &\text{ if } |k|\equiv 5\pmod{6}
			\end{cases}=\left( \frac{-3}{k} \right),
		\end{equation}
		and similarly if $E$ has type $\rm{IV}^*$, $w(E/\mathcal{K})=\left( \frac{-3}{k} \right)$, which completes the proof. 
	\end{proof}

	\begin{corollary}\label{semistable}
		Let $E$ be a semistable elliptic curve defined over a number field $K$. Then 
		\begin{equation}
			w(E/K)=(-1)^{m+u},
		\end{equation}
		where $m$ is the number of primes of $K$ where $E$ has split multiplicative reduction and $u$ is the number of infinite places of $K$.  
	\end{corollary}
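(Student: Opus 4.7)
The plan is to deduce this directly from the definition of the global root number as a product of local root numbers together with Theorem \ref{rootnos}, since the semistability hypothesis eliminates every case of that theorem beyond (i)--(iv).

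First I would unwind the definition: by \S\ref{globalrootnos},
\begin{equation}
w(E/K) = \prod_{v} w(E/K_v),
\end{equation}
where the product is over all places of $K$, finite and infinite. I would then split this product into three pieces according to the reduction type at each place. Since $E/K$ is semistable, every finite place $v$ falls into exactly one of the following: good reduction, split multiplicative reduction, or non-split multiplicative reduction; no place of $K$ contributes an additive-reduction factor, so parts (v)--(viii) of Theorem \ref{rootnos} are irrelevant here.

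Next I would read off each local contribution from Theorem \ref{rootnos}. By (i), each of the $u$ Archimedean places contributes $-1$, giving $(-1)^u$ overall. By (ii), every prime of good reduction contributes $+1$ and so drops out of the product. By (iii), each of the $m$ primes of split multiplicative reduction contributes $-1$, giving $(-1)^m$. By (iv), each prime of non-split multiplicative reduction contributes $+1$ and likewise drops out. Multiplying these together gives
\begin{equation}
w(E/K) = (-1)^{u} \cdot (-1)^{m} = (-1)^{m+u},
\end{equation}
as claimed.

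There is really no obstacle here: the proof is a bookkeeping exercise once one observes that semistability rules out the additive-reduction cases, which are the only technically delicate parts of Theorem \ref{rootnos}. The only thing worth flagging explicitly in the write-up is the finiteness of the relevant products, which is automatic because $E$ has good reduction at all but finitely many places and $K$ has only finitely many Archimedean places.
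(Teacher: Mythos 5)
Your argument is correct and is precisely the intended one: the paper leaves Corollary \ref{semistable} without proof because it is an immediate consequence of Theorem \ref{rootnos}(i)--(iv) once one notes that semistability excludes additive reduction. Your write-up fills in exactly that bookkeeping, including the (automatic) finiteness of the product, so there is nothing to add or correct.
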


	\begin{example}\label{semistableex}
		Let us take the curve
		\begin{equation}
			E:y^2+y=x^3-x^2. \hspace{10pt} (11A3)
		\end{equation}
		This curve has split multiplicative reduction at $11$ and good reduction everywhere else, so
		\begin{equation}
			w(E/\mathbb{Q})=w(E/\mathbb{Q}_{11})\cdot w(E/\mathbb{R})=(-1)(-1)=+1. 
		\end{equation}
		Assuming the parity conjecture, this tells us that the rank of $E/\mathbb{Q}$ is even. In fact, $E$ has rank $0$ over $\mathbb{Q}$. 
	\end{example}

	\begin{example}\label{firstquadtwistex}
		Let us take the curve 11A3 as in the above example. We know that $w(E/\mathbb{Q})=1$. Let us look at what happens to the global root number of $E$ over $K=\mathbb{Q}(\sqrt{-2})$. Since $-2$ is a quadratic residue modulo $11$, $11$ splits in $K$ into, say, $v$ and $\Tilde{v}$. Then $w(E/K_{v})=w(E/K_{\Tilde{v}})=-1$ since the reduction over $K_v$ and $K_{\Tilde{v}}$ is split multiplicative. Hence,
		\begin{equation} 
			w(E/K)=w(E/K_v)\cdot w(E/K_{\Tilde{v}})\cdot w(E/\mathbb{C})=(-1)^2\cdot (-1)=-1.
		\end{equation}
		Assuming the parity conjecture, this tells us that $E$ must acquire a point of infinite order over $K$.
	\end{example}
	
	Asking what happens to local root numbers when we look at the curve over an extension of the base field will inform many of our examples of curious parity phenomena in \S \ref{examples}. The above example illustrates the following key feature of root numbers of elliptic curves.
	
	\begin{lemma}\label{2primes}
		Let $E/K$ be an elliptic curve and let $F$ be a Galois extension of $K$. Let $v$ be a prime of $K$ and let $v_1$ and $v_2$ be primes above $v$ in $F$. Then 
		\begin{equation}
			w(E/F_{v_1})=w(E/F_{v_2}).
		\end{equation}
	\end{lemma}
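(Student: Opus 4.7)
The plan is to exploit the transitivity of the Galois action on primes above $v$. Since $F/K$ is Galois, the group $\mathrm{Gal}(F/K)$ acts transitively on the set of primes of $F$ lying over $v$, so there is some $\sigma \in \mathrm{Gal}(F/K)$ with $\sigma(v_1) = v_2$. This automorphism extends by continuity to a topological $K$-algebra isomorphism $\hat{\sigma}\colon F_{v_1}\xrightarrow{\sim} F_{v_2}$ (this is the standard identification that makes all completions above $v$ abstractly isomorphic as extensions of $K_v$).

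Next I would transport the elliptic curve through this isomorphism. Because $E$ is defined over $K$, its defining Weierstrass equation has coefficients fixed by $\hat{\sigma}$, so $\hat{\sigma}$ induces an isomorphism between $E/F_{v_1}$ and $E/F_{v_2}$ that is compatible with the respective absolute Galois group actions (via the induced isomorphism of absolute Galois groups $G_{F_{v_2}} \to G_{F_{v_1}}$, $\tau \mapsto \hat\sigma^{-1}\tau\hat\sigma$). Consequently the two Weil--Deligne representations $\sigma'_{E/F_{v_1}}$ and $\sigma'_{E/F_{v_2}}$ attached to the $\ell$-adic Tate modules are isomorphic. Since local root numbers are isomorphism invariants of the associated Weil--Deligne representation, the equality $w(E/F_{v_1}) = w(E/F_{v_2})$ follows.

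If one prefers a hands-on verification that avoids appealing to the definition through Weil--Deligne representations, one can instead apply the classification in Theorem \ref{rootnos}: every quantity that appears there (Kodaira type, residue characteristic, residue field, minimal discriminant, $c_6 \bmod \text{squares}$, Hilbert symbols, $v(\Delta)$, etc.) is an invariant of the $K$-isomorphism class of the completed field, and is therefore transported faithfully by $\hat{\sigma}$. A case-by-case check through $(i)$--$(viii)$ then gives the same conclusion.

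The only real subtlety is the conceptual one of checking that the isomorphism $\hat{\sigma}$ genuinely identifies the two local elliptic curves together with their Galois actions. Once this is in place, the result is immediate either from the intrinsic definition or from the formulae of Theorem \ref{rootnos}; there are no delicate calculations to perform.
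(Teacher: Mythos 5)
Your argument is correct and is the same as the paper's proof, just worked out in greater detail: the Galois action gives a $K$-algebra isomorphism $F_{v_1}\cong F_{v_2}$ fixing the defining equation of $E$, and the local root number depends only on the isomorphism class of $E$ over the local field. The paper states this in three sentences; your expansion (including the explicit construction of $\hat\sigma$ and the remark that the Weil--Deligne representations are transported, or alternatively that the invariants in Theorem \ref{rootnos} are) simply makes the same reasoning explicit.
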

	
	\begin{proof}
		Since $F/K$ is Galois, $F_{v_1}$ is isomorphic to $F_{v_2}$. The isomorphism from $F_{v_1}$ to $F_{v_2}$ preserves the equation for $E$ and the root number $w(E/F_{v_i})$ only depends on the isomorphism class of $E/F_{v_i}$. 
	\end{proof}

	\subsection{Root numbers of quadratic twists of elliptic curves}\label{rootnosquadtwists}
	If $E:y^2=f(x)$ is an elliptic curve defined over a number field $K$ and $d\in K^\times$, we call $E_d:dy^2=f(x)$ the quadratic twist of $E$ by $d$. Recall the following well-known relationship between the rank of $E$ and the rank of $E_d$. 
	
	\begin{lemma}\label{quadranks}
		Let $E$ be an elliptic curve defined over a number field $K$ and let $d\in K^\times\setminus K^{{\times 2}}$. Then 
		\begin{equation}
			{\rm{rk}}(E/K(\sqrt{d}))={\rm{rk}}(E/K)+{\rm{rk}}(E_d/K). 
		\end{equation}
	\end{lemma}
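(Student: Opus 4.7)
The plan is to exploit the Galois action of $G=\operatorname{Gal}(F/K)$ on $E(F)$, where $F=K(\sqrt{d})$, and decompose the rationalised Mordell--Weil group into its isotypical components.

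Let $\sigma$ be the nontrivial element of $G$ and consider the $\mathbb{Q}[G]$-module $V=E(F)\otimes_{\mathbb{Z}}\mathbb{Q}$. Since $G$ has order $2$, the action of $\sigma$ splits $V$ as $V=V^+\oplus V^-$, where $V^{\pm}=\{v\in V:\sigma v=\pm v\}$, and
\[
\operatorname{rk}(E/F)=\dim_{\mathbb{Q}}V=\dim_{\mathbb{Q}}V^++\dim_{\mathbb{Q}}V^-.
\]
Thus it suffices to identify $V^+$ with $E(K)\otimes\mathbb{Q}$ and $V^-$ with $E_d(K)\otimes\mathbb{Q}$.

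For the invariant part, the natural inclusion $E(K)\hookrightarrow E(F)$ has image inside $V^+$, and conversely any $\sigma$-fixed element of $V^+$ comes from a point of $E(K)$ after clearing denominators, using that $E(F)^{G}=E(K)$ and that $V^+=(V)^G$. This gives $V^+\cong E(K)\otimes\mathbb{Q}$ and hence $\dim_{\mathbb{Q}} V^+=\operatorname{rk}(E/K)$.

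For the anti-invariant part, I would write $E:y^2=f(x)$ and $E_d:dy^2=f(x)$, and construct an explicit homomorphism
\[
\phi:E_d(K)\longrightarrow E(F),\qquad (x_0,y_0)\longmapsto (x_0,\sqrt{d}\,y_0),
\]
checking that this is a group homomorphism via the defining equations and that $\sigma\phi(P)=-\phi(P)$ for all $P\in E_d(K)$, so the image lies in the $-1$ eigenspace. A symmetric construction produces the inverse map from the $-1$ eigenspace back to $E_d(K)\otimes\mathbb{Q}$ (after clearing $\sqrt{d}$ from coordinates and rescaling, which is harmless after tensoring with $\mathbb{Q}$). This yields $E_d(K)\otimes\mathbb{Q}\cong V^-$ and hence $\dim_{\mathbb{Q}}V^-=\operatorname{rk}(E_d/K)$. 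Adding these dimensions gives the claimed rank formula.

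The only mildly subtle step is the identification of $V^-$ with $E_d(K)\otimes\mathbb{Q}$: one must verify that $\phi$ is well-defined as a group homomorphism (a short verification on the addition formulas, or equivalently noting that $(x,y)\mapsto(x,\sqrt{d}\,y)$ gives an isomorphism $E_d\to E$ over $F$ which intertwines the $G$-action with multiplication by $-1$), and that no anti-invariant points are missed. Everything else is a direct consequence of semisimplicity of $\mathbb{Q}[G]$.
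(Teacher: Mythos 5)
The paper does not actually prove this lemma; it is introduced with ``Recall the following well-known relationship\dots'' and left as a standard fact. Your argument is correct and is the usual way to prove it: since $|G|=2$ is invertible in $\mathbb{Q}$, the $\mathbb{Q}[G]$-module $V=E(F)\otimes_{\mathbb{Z}}\mathbb{Q}$ splits as $V^+\oplus V^-$; exactness of $-\otimes_{\mathbb{Z}}\mathbb{Q}$ gives $V^+=\ker(1-\sigma)\otimes\mathbb{Q}=E(F)^G\otimes\mathbb{Q}=E(K)\otimes\mathbb{Q}$ and likewise $V^-=\ker(1+\sigma)\otimes\mathbb{Q}$; and the $F$-isomorphism $\phi\colon E_d\to E$, $(x,y)\mapsto(x,\sqrt{d}\,y)$, satisfies $\sigma\circ\phi=[-1]\circ\phi\circ\sigma$, so it carries $E_d(K)=E_d(F)^{\sigma}$ isomorphically onto $\ker(1+\sigma)\subset E(F)$. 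Two small points worth being explicit about if you write this up: your ``clearing denominators'' step for $V^+$ is cleanest phrased as the flatness/exactness of $-\otimes\mathbb{Q}$ commuting with $\ker(1\mp\sigma)$, which also sidesteps any torsion bookkeeping; and $\phi$ is automatically a group homomorphism because it is the restriction to points of an isomorphism of elliptic curves over $F$ (sending $\mathcal{O}\mapsto\mathcal{O}$), so no verification on the addition formulas is needed.
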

	
	\begin{lemma}\label{quadrootnos}
		Let $E/K$ and $E_d/K$ be as above. Then
		\begin{equation}
			w(E/K(\sqrt{d}))=w(E/K)\cdot w(E_d/K).
		\end{equation}
	\end{lemma}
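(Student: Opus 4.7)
The plan is to translate the identity into a statement about $\ell$-adic Galois representations and then invoke the inductivity of global root numbers. Fix a prime $\ell$; let $V_\ell(E)$ denote the $\ell$-adic Tate module of $E$, viewed as a representation of the absolute Galois group $G_K$, and let $\chi$ denote the non-trivial character of $\mathrm{Gal}(K(\sqrt{d})/K)$, inflated to a character of $G_K$.

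First I would record two standard identifications. Quadratic twisting of an elliptic curve corresponds on the Galois side to tensoring with the corresponding quadratic character, so $V_\ell(E_d) \cong V_\ell(E) \otimes \chi$ as $G_K$-modules. Moreover, since the regular representation of $\mathrm{Gal}(K(\sqrt{d})/K) \cong C_2$ decomposes as $\mathbf{1} \oplus \chi$, Frobenius reciprocity gives
$$\mathrm{Ind}_{G_{K(\sqrt{d})}}^{G_K} V_\ell(E) \;\cong\; V_\ell(E) \oplus \bigl(V_\ell(E)\otimes\chi\bigr) \;\cong\; V_\ell(E) \oplus V_\ell(E_d),$$
where on the left $V_\ell(E)$ is tacitly viewed as a $G_{K(\sqrt{d})}$-module via restriction.

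Next I would observe that $w(E/K(\sqrt{d}))$ is, by construction, the global root number of the Weil--Deligne representation attached to $V_\ell(E)$ considered over $K(\sqrt{d})$. Invoking the inductivity of global root numbers for Weil--Deligne representations, together with their multiplicativity on direct sums, one concludes
$$w(E/K(\sqrt{d})) \;=\; w\bigl(V_\ell(E) \oplus V_\ell(E_d)\bigr) \;=\; w(E/K)\cdot w(E_d/K).$$

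The main obstacle is the inductivity of global root numbers, a theorem of Deligne: although locally the $\varepsilon$-factor of an induced representation differs from that of the original representation by Langlands' $\lambda$-factor, these local $\lambda$-factors assemble into a global product equal to $+1$. This is why a purely place-by-place check via Theorem \ref{rootnos} cannot work: even at the archimedean places the identity fails in isolation (for instance, when $K=\mathbb{Q}$ and $d<0$, the left-hand side contributes $w(E/\mathbb{C})=-1$ while the right-hand side contributes $(-1)\cdot(-1)=+1$), so a genuinely global input is indispensable.
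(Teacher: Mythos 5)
Your proof is correct, and it takes a genuinely different route from the paper's. The paper (in the remark immediately following the lemma) derives the result from a local identity of Česnavičius, in which the failure of $w(E/\mathcal{K}(\sqrt{d}))=w(E/\mathcal{K})\cdot w(E_d/\mathcal{K})$ at an individual place is quantified by a Hilbert symbol, and then globalises via the product formula $\prod_v (a,b)_{K_v}=1$. You instead lift the question to Weil--Deligne representations: the identifications $V_\ell(E_d)\cong V_\ell(E)\otimes\chi$ and $\operatorname{Ind}_{G_{K(\sqrt d)}}^{G_K}\operatorname{Res}V_\ell(E)\cong V_\ell(E)\oplus V_\ell(E_d)$ are both standard (the latter is the projection formula with $\operatorname{Ind}\mathbf{1}=\mathbf{1}\oplus\chi$), and then inductivity and additivity of \emph{global} root numbers finish it. The global input is comparable in depth in both treatments --- your appeal to Deligne's theorem that the local $\lambda$-factors multiply out to $1$ globally is, for a quadratic extension, essentially equivalent to the Hilbert symbol product formula that the paper uses, since the relevant $\lambda$-factors are normalised Gauss sums. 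What your route buys is conceptual clarity and uniformity: it is exactly the mechanism underlying the paper's later Theorem~\ref{artinformalism}, from which the lemma can be read off as $w(E/K(\sqrt d))=w(E/K,\mathbf{1}\oplus\chi)=w(E/K)\cdot w(E/K,\chi)=w(E/K)\cdot w(E_d/K)$; what the paper's route buys is an explicit, place-by-place error term. Your archimedean sanity check (for $K=\mathbb{Q}$, $d<0$: left side $w(E/\mathbb{C})=-1$, right side $w(E/\mathbb{R})\cdot w(E_d/\mathbb{R})=+1$) is accurate and makes exactly the point the paper's cautionary remark is making --- a purely local comparison via Theorem~\ref{rootnos} cannot succeed, so some genuinely global reciprocity is unavoidable.
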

	
	\begin{remark}
		We caution the reader that the statement of Lemma \ref{quadrootnos} is not true on the level of local root numbers, see \cite{cesna} Proposition 3.11 for the analogous statement in the local setting. The lemma follows from the local statement, along with the product formula for Hilbert symbols. 
	\end{remark}
	
	Lemma \ref{quadrootnos} tells us that the statement of the parity conjecture is consistent with what we know about the ranks of $E$ and $E_d$. One can exploit this relationship between global root numbers and use the parity conjecture to predict the existence of points of infinite order on quadratic twists of elliptic curves.

	\begin{example}
		Let us take $E$ to be the curve 11A3 over $\mathbb{Q}$ and let $K=\mathbb{Q}(\sqrt{-2})$. We saw in Examples~\ref{semistableex} and \ref{firstquadtwistex} that $w(E/\mathbb{Q})=+1$ and $w(E/K)=-1$. So, by Lemma \ref{quadrootnos}, $w(E_{-2}/\mathbb{Q})=-1$ and the parity conjecture implies there is a rational point of infinite order on $E_{-2}:y^2 = x^3-x^2-x-1$ over $\mathbb{Q}$. In fact, $E_{-2}$ has rank $1$ over $\mathbb{Q}$. 
	\end{example}

	\begin{theorem}\label{5050quad}
		Let $E$ be an elliptic curve over $\mathbb{Q}$. 
		\begin{enumerate}[(i)]
			\item The function $d\mapsto w(E_d/\mathbb{Q})$ is periodic on the set of positive (resp. negative) square-free integers. The period divides $\prod_{p|N_E}p^2$ if $N_E$ is odd and $4\prod_{p|N_E}p^2$ if $N_E$ is even; in particular, it divides $4N_E^2$.
			\item Root numbers of quadratic twists of $E$ satisfy
			\begin{equation}
				w(E_d/\mathbb{Q})=\begin{cases}
					+1 \text{ for } 50\% \text{ of square-free } d; \\ 
					-1 \text{ for } 50\% \text{ of square-free } d.
				\end{cases}
			\end{equation}
		\end{enumerate}
	\end{theorem}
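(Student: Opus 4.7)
The plan for (i) is to decompose $w(E_d/\mathbb{Q})=\prod_v w(E_d/\mathbb{Q}_v)$ and isolate how each factor depends on $d$. By Theorem~\ref{rootnos} and Corollary~\ref{qweierstrass}, for every $p\geq 5$ with $p\nmid N_E$ one has $w(E_d/\mathbb{Q}_p)=+1$ when $p\nmid d$ and $w(E_d/\mathbb{Q}_p)=\left(\frac{-1}{p}\right)$ when $p\parallel d$; the latter because $v_p(\Delta_{E_d})=6v_p(d)=6$ forces Kodaira type $\textup{I}_0^*$. Recognising both cases uniformly as the tame Hilbert symbol $(d,-1)_p$ and applying the global product formula $\prod_v(d,-1)_v=1$, I would rewrite the infinite tail over primes of good reduction as a product over the fixed set $S=\{\infty,2\}\cup\{p\mid N_E\}$, arriving at
\[
w(E_d/\mathbb{Q})=-(d,-1)_\infty\,(d,-1)_2\,w(E_d/\mathbb{Q}_2)\prod_{\substack{p\mid N_E\\p\text{ odd}}}(d,-1)_p\,w(E_d/\mathbb{Q}_p).
\]

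Every factor on the right depends only on the class of $d$ in $\mathbb{Q}_v^\times/\mathbb{Q}_v^{\times 2}$; for square-free $d$ this class is determined by $d\bmod p^2$ at odd $p$ and by $d\bmod 16$ at $p=2$. When $2\mid N_E$ I would refine the 2-adic factor using the Halberstadt--Rizzo tables of Appendix~\ref{rootnosat2table} to verify the sharper bound $4\prod_{p\mid N_E}p^2$. Assembled together, $d\mapsto w(E_d/\mathbb{Q})$ is then periodic on square-free integers of each sign with the claimed period.

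For (ii), the periodicity of (i) reduces the statement to a finite balance check: since square-free integers of a fixed sign are equidistributed modulo $M$ among the classes $a$ with $p^2\nmid a$ for every $p\mid M$, it suffices to show that half of these classes yield $w(E_a)=+1$ and half $w(E_a)=-1$. My plan is to produce a density-preserving involution $d\mapsto \tau(d)$ on square-free integers of fixed sign that flips $w$, using an auxiliary prime $\ell$ coprime to $M$: Dirichlet lets me select $\ell$ to be a square in $\mathbb{Q}_p^\times$ for each odd $p\mid N_E$, so that those factors in $S$ are unaffected, while the new $\textup{I}_0^*$-place at $\ell$ contributes $\left(\frac{-1}{\ell}\right)=-1$ as soon as $\ell\equiv 3\pmod 4$. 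Pairing $d$ with $\ell d$ (and $d$ with $d/\ell$ when $\ell\mid d$) would then exchange $(+1)$- and $(-1)$-classes.

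The main obstacle will be the $2$-adic place. No prime $\ell$ can simultaneously satisfy $\ell\equiv 1\pmod 8$ (needed so that $\ell$ is a square in $\mathbb{Q}_2$ and the $2$-adic factor is preserved) and $\ell\equiv 3\pmod 4$ (needed for $\left(\frac{-1}{\ell}\right)=-1$), and the same obstruction persists for any product $\ell_1\ell_2$. My proposed resolution is to allow $(d,-1)_2\,w(E_d/\mathbb{Q}_2)$ to change, choosing $\ell$ so that the combined $2$-adic change is a \emph{constant} sign across all classes of $d$ modulo squares, and absorbing this constant into the desired global flip. Verifying this uniform $2$-adic cancellation via a case analysis of the Halberstadt--Rizzo table (separately for $2\mid N_E$ and $2\nmid N_E$) is the main technical hurdle; once established, the involution delivers the 50--50 split on positive and on negative square-free $d$ separately.
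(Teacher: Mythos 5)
Your approach to (i) is a genuinely different route from the paper's.  The paper computes $w(E/\mathbb{Q}(\sqrt{d_1}))$ vs.\ $w(E/\mathbb{Q}(\sqrt{d_2}))$ directly, using that $E$ (not $E_d$) has good reduction away from $N_E$, so the only local contributions come from $\infty$ and primes of $\mathbb{Q}(\sqrt{d_i})$ over $N_E$; periodicity then drops out of $\mathbb{Q}_p(\sqrt{d_1})=\mathbb{Q}_p(\sqrt{d_2})$ via Hensel.  Your route, collapsing the tail $\prod_{p\nmid N_E}w(E_d/\mathbb{Q}_p)$ into Hilbert symbols and invoking the product formula, is also sound.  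However, note that what you establish directly is a period dividing $16\prod_{\text{odd }p\mid N_E}p^2$ (from $d\bmod 16$ at $2$ and $d\bmod p^2$ at odd $p$).  When $2\mid N_E$, this \emph{is} the stated bound $4\prod_{p\mid N_E}p^2$ (since $4\cdot 2^2=16$) and no table-chase is needed.  The case that actually requires refinement is $2\nmid N_E$, where the claimed period is $\prod_{\text{odd }p\mid N_E}p^2$ with no factor of $16$: you would need to show that $(d,-1)_2\,w(E_d/\mathbb{Q}_2)$ is constant, i.e.\ that $w(E_d/\mathbb{Q}_2)=(d,-1)_2$ whenever $E$ has good reduction at $2$.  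This is true (it is the same computation that gives $w(E_d/\mathbb{Q}_p)=(d,-1)_p$ for good $p\geq 3$), but it is not covered by Theorem~\ref{rootnos}(viii) or Corollary~\ref{qweierstrass}, and you don't address it.  So your write-up has the refinement in the wrong branch.

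Your plan for (ii) has a genuine and unfixable gap: the statement you are trying to prove, ``the involution delivers the 50--50 split on positive and on negative square-free $d$ separately,'' is \emph{false}.  Example~\ref{posquad} in the paper exhibits $E:y^2=x^3-91x+182$ with $w(E_d/\mathbb{Q})=+1$ for \emph{all} positive square-free $d$ and $-1$ for all negative ones, so no sign-preserving involution can flip the root number.  This is exactly why your $2$-adic obstruction cannot be ``absorbed.''  Concretely, when $2\nmid N_E$ the combined $2$-adic change under $d\mapsto \ell d$ is $(\ell,-1)_2=\bigl(\tfrac{-1}{\ell}\bigr)$, which exactly cancels the $\bigl(\tfrac{-1}{\ell}\bigr)$ you gain from the new $\textup{I}_0^*$ place at $\ell$; the net factor is always $+1$.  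The paper's proof avoids this by choosing a \emph{negative} $d_0$ with $(d_0,2N_E)=1$ and all bad primes split in $\mathbb{Q}(\sqrt{d_0})$: then $\mathbb{Q}(\sqrt{d_0})$ is imaginary, $w(E/\mathbb{Q}(\sqrt{d_0}))=-1$, and the biquadratic field $\mathbb{Q}(\sqrt{d},\sqrt{d_0})$ forces $w(E_d/\mathbb{Q})\,w(E_{dd_0}/\mathbb{Q})=-1$.  The sign change at $\infty$ is what supplies the missing $-1$, and it necessarily interchanges the positive and negative square-free integers; the $50/50$ split is therefore over all square-free $d$, not over each sign separately.  To repair your argument you would need to replace the auxiliary prime $\ell>0$ by a negative $d_0$ (or build the sign of $d$ into the involution), at which point you essentially recover the paper's proof.
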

	
	\begin{proof}
		These results are well-known, but we prove them here for lack of a reference. 
		
		$(i)$ Let $D=\prod_{p|N_E}p^2$ if $N_E$ is odd and $D=4\prod_{p|N_E}p^2$ if $N_E$ is even. Suppose $d_1\equiv d_2\pmod{D}$ and $d_1$ and $d_2$ have the same sign, and let $p\mid N_E$. First suppose $p>2$. Since $d_1\equiv d_2\pmod{p^2}$ and $d_1$ and $d_2$ are square-free, it follows that $d_1/d_2\equiv 1\pmod{p}$, so $d_1/d_2$ is a square in $\mathbb{Q}_p$ by Hensel's lemma. So $\mathbb{Q}_p(\sqrt{d_1})=\mathbb{Q}_p(\sqrt{d_2})$ and for a prime $v\mid p$ in $\mathbb{Q}(\sqrt{d_1})$ and $\Tilde{v}$ in $\mathbb{Q}(\sqrt{d_2})$, $w(E/\mathbb{Q}(\sqrt{d_1})_v)=w(E/\mathbb{Q}(\sqrt{d_2})_{\Tilde{v}})$. Since $p$ has the same splitting behaviour in $\mathbb{Q}(\sqrt{d_1})$ and $\mathbb{Q}(\sqrt{d_2})$,
		\begin{equation}
			\prod_{v\mid p} w(E/\mathbb{Q}(\sqrt{d_1})_v)= \prod_{\Tilde{v}\mid p} w(E/\mathbb{Q}(\sqrt{d_2})_{\Tilde{v}}). 
		\end{equation}
		Now, if $N_E$ is even and $p=2$, $d_1\equiv d_2\pmod{16}$ and since $d_1$ and $d_2$ are square-free, it follows that $d_1/d_2\equiv 1\pmod{8}$ and $d_1/d_2$ is a square in $\mathbb{Q}_2$. Thus, $\prod_{v\mid 2} w(E/\mathbb{Q}(\sqrt{d_1})_v)= \prod_{\Tilde{v}\mid 2} w(E/\mathbb{Q}(\sqrt{d_2})_{\Tilde{v}})$ by the same argument as above. Finally, note that $\mathbb{Q}(\sqrt{d_1})$ and $\mathbb{Q}(\sqrt{d_2})$ have the same number of infinite places. Hence
		\begin{equation}
			w(E/\mathbb{Q}(\sqrt{d_1}))=(-1)^{v\mid \infty}\prod_{v\mid N_E} w(E/\mathbb{Q}(\sqrt{d_1})_v) = (-1)^{\Tilde{v}\mid \infty}\prod_{\Tilde{v}\mid N_E} w(E/\mathbb{Q}(\sqrt{d_2})_{\Tilde{v}})=w(E/\mathbb{Q}(\sqrt{d_2})).
		\end{equation}
		Thus $w(E_{d_1}/\mathbb{Q})=w(E_{d_2}/\mathbb{Q})$ by Lemma \ref{quadrootnos}.
		
		$(ii)$ First, we claim that we can find a $d_0<0$ such that $(d_0,2 N_E)=1$ and, for every $d$, we have 
		\begin{equation}
			w(E_{dd_0}/\mathbb{Q})=-w(E_d/\mathbb{Q}). 
		\end{equation}
		Indeed, choose $d_0<0$ such that all the primes of bad reduction of $E$ split in $\mathbb{Q}(\sqrt{d_0})$ and $(d_0,2 N_E)=1$. By Lemma \ref{2primes}, this ensures that $w(E/\mathbb{Q}(\sqrt{d_0}))=-1$. If $d=d_0$, by Lemma \ref{quadrootnos}
		\begin{equation}
			w(E_{d_0^2}/\mathbb{Q})=w(E/\mathbb{Q})=w(E/\mathbb{Q}(\sqrt{d_0}))w(E_{d_0}/\mathbb{Q})=-w(E_{d_0}/\mathbb{Q}). 
		\end{equation}
		If $d\neq d_0$, by Lemma \ref{quadrootnos} and Lemma \ref{2primes},
		\begin{align}
			w(E/\mathbb{Q})\cdot w(E_{d_0}/\mathbb{Q})\cdot w(E_{d}/\mathbb{Q})\cdot w(E_{dd_0}/\mathbb{Q})=w(E/\mathbb{Q}(\sqrt{d},\sqrt{d_0}))=+1
		\end{align}
		since $\mathbb{Q}(\sqrt{d},\sqrt{d_0})$ has two infinite places and all finite places dividing the discriminant of $E$ split into an even number of places in $\mathbb{Q}(\sqrt{d},\sqrt{d_0})$. Now, 
		\begin{align}
			w(E/\mathbb{Q})\cdot w(E_{d_0}/\mathbb{Q}) =w(E/\mathbb{Q}(\sqrt{d_0}))=-1.
		\end{align}
		Hence $w(E_{d}/\mathbb{Q})\cdot w(E_{dd_0}/\mathbb{Q})=-1$ and we have proved the claim.
		
		Now, to finish proving the theorem, take $d_0$ as above and $D$ as in the proof of $(i)$. By part $(i)$ of the theorem, the function $d\mapsto w(E_d/\mathbb{Q})$ is periodic on the set of positive (resp. negative) square-free integers with the period dividing $D$. The involution $d\leftrightarrow d\cdot d_0$ changes the sign of the root number i.e. for any $d$ such that $w(E_d/\mathbb{Q})=+1$, we have $w(E_{d\cdot d_0}/\mathbb{Q})=-1$. Since $d_0$ is coprime to $D$, the density of square-free integers congruent to $d$ modulo $D$ is the same as the density of those congruent to $dd_0$ modulo $D$, see e.g.~\cite{stackexchange}. The result follows. 
	\end{proof} 
	
	Theorem \ref{5050quad} gives a heuristic for Goldfeld's conjecture, which states that there is a $50/50$ distribution of the rank being $0$ and $1$ in a quadratic twist family over $\mathbb{Q}$, see \cite{goldfeld} Conjecture B. Over general number fields, root numbers of quadratic twists are also periodic (by essentially the same proof). However, the distribution of the root numbers can be different. As an extreme case, it is possible for all quadratic twists to have the same root number; see \S\ref{nogoldfeld5} or \cite{posquadtwists} for the original discussion. In \cite{klagsbrun} Example 7.11 and \cite{zev2}, it is shown that the set of proportions that appear for elliptic curves is dense in $[0,1]$. In \cite{klagsbrun} and \cite{zev2} this is phrased in terms of $2$-Selmer ranks --- this is equivalent to the corresponding statement for global root numbers because the $2$-parity conjecture is known in this instance by \cite{regconst} Theorem 1.3.
	
	\begin{remark}
		We expect that for elliptic curves over $\mathbb{Q}$ in a suitable ordering, there is a $50/50$ distribution of the global root number being $+1$ and $-1$ (for the ordering used, see Notation \ref{ordering}). Despite the fact that this is a statement about root numbers rather than about ranks, this remains an open problem.
	\end{remark}
	
	\subsection{Root numbers of Artin twists of elliptic curves}\label{paritytwists}
	
	Let $E/K$ be an elliptic curve and let $F$ be a Galois extension of $K$. Then $E(F)\otimes_\mathbb{Z}\mathbb{C}$ is naturally a $\text{Gal}(F/K)$-representation by letting elements of the Galois group act on the co-ordinates of the $F$-rational points of $E$. There is a parity conjecture for Artin twists of elliptic curves which describes the parity of the multiplicity of an irreducible, self-dual $\text{Gal}(F/K)$-representation $\rho$ in $E(F)\otimes_\mathbb{Z}\mathbb{C}$. Since $\text{rk}(E/F)=\text{dim}E(F)\otimes_\mathbb{Z}\mathbb{C}$, this can tell us about the existence of rational points over $F$. The parity conjecture for twists is especially striking when the usual parity conjecture does not provide us with any information about the existence of rational points (see Example~\ref{twistsexample}). We state the parity conjecture for Artin twists here and present a theorem that allows us to explicitly calculate global root number of twists of elliptic curves over $\mathbb{Q}$ by self-dual Artin representations when the conductor of $\rho$ is coprime to the conductor of $E$. 
	
	\begin{paritytwists}
		Let $E/K$ be an elliptic curve over a number field and let $\rho$ be a self-dual Artin representation of $\textup{Gal}(\bar{K}/K)$ that factors through $\emph{\text{Gal}}(F/K)$, for a finite extension $F/K$. Then 
		\begin{equation}
			(-1)^{\langle \rho,E(F)\otimes_{\mathbb{Z}} \mathbb{C}\rangle}=w(E/K,\rho),
		\end{equation}
		where $w(E/K,\rho)$ is the global root number of the twist of $E$ by $\rho$. 
	\end{paritytwists}
	
	\begin{theorem}[\cite{vladtwists} Corollary 2]\label{rootnostwists}
		Let $\rho$ be a self-dual Artin representation of $\textup{Gal}(\bar{\mathbb{Q}}/\mathbb{Q})$. Let $E$ be an elliptic curve over $\mathbb{Q}$ whose conductor $N_E$ is coprime to the conductor of $\rho$. Then
		\begin{equation}
			w(E/\mathbb{Q},\rho)= w(E/\mathbb{Q})^{\emph{\text{dim }}\rho}\cdot\emph{\text{sign}}(\alpha_\rho)\cdot\left(\frac{\alpha_\rho}{N_E}\right)
		\end{equation}
		where $\left(\frac{*}{*}\right)$ is the Jacobi symbol and $\alpha_\rho=1$ if $\emph{\text{det}}(\rho)=\mathds{1}$ and, otherwise, $\alpha_\rho$ is such that the character $\emph{\text{det}}(\rho)$ factors through $\mathbb{Q}(\sqrt{\alpha_\rho})$. We adopt the convention that $\left(\frac{\alpha_\rho}{2}\right)=-1$ if $\alpha_\rho\equiv 5\pmod{8}$ and $\left(\frac{\alpha_\rho}{2}\right)=+1$ if $\alpha_\rho\equiv 1 \pmod{8}$. 
	\end{theorem}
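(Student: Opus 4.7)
The plan is to compute $w(E/\mathbb{Q},\rho)$ as the product of local root numbers $w(E/\mathbb{Q}_v,\rho_v)$, using at each prime the standard formula for local epsilon factors of a tensor product in the special case when one of the representations is unramified: if $\sigma$ is a Weil--Deligne representation at a finite prime $p$ and $\tau$ is unramified of dimension $n$, then
\begin{equation}
\epsilon(\sigma\otimes\tau,\psi_p)=\det(\tau)(\text{Frob}_p)^{a(\sigma)+\dim(\sigma)\,n(\psi_p)}\cdot\epsilon(\sigma,\psi_p)^{n},
\end{equation}
and symmetrically when the roles are swapped. The coprimality hypothesis $(N_E,\mathfrak{f}_\rho)=1$ ensures that at every finite prime at most one of $\sigma_E$ (the Weil--Deligne representation attached to $E$) and $\rho$ is ramified, so this formula applies throughout.

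Working with the standard self-dual additive character (so $n(\psi_p)=0$ at all finite primes), I would evaluate the local contributions place by place. At primes where both $\sigma_E$ and $\rho$ are unramified, the local factor is $+1$. At a prime $p\mid N_E$, where $\rho$ is unramified, the formula yields
\begin{equation}
w(E/\mathbb{Q}_p,\rho)=\det(\rho)(\text{Frob}_p)^{a_p(E)}\cdot w(E/\mathbb{Q}_p)^{\dim\rho};
\end{equation}
since $\det\rho$ is the quadratic character cutting out $\mathbb{Q}(\sqrt{\alpha_\rho})$, the first factor equals $\left(\frac{\alpha_\rho}{p}\right)^{a_p(E)}$, with the $p=2$ convention in the statement tailored precisely to this identification. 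At a prime $p\mid\mathfrak{f}_\rho$, where $E$ has good reduction and $\sigma_E$ is unramified of dimension $2$, applying the formula with roles swapped and passing to the unitary normalisation of $\sigma_E$ (so that its determinant becomes trivial at Frobenius) gives the local contribution $w(\rho_p)^{2}$. At infinity, $\rho_\infty$ decomposes as $a$ copies of the trivial character and $b$ copies of the sign character with $(-1)^b=\text{sign}(\alpha_\rho)$, and a direct calculation using $w(E/\mathbb{R})=-1$ yields $w(E/\mathbb{R},\rho)=(-1)^{\dim\rho}$.

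Multiplying the local factors, together with the identity $\prod_{p\mid N_E}\det(\rho)(\text{Frob}_p)^{a_p(E)}=\left(\frac{\alpha_\rho}{N_E}\right)$, produces
\begin{equation}
w(E/\mathbb{Q},\rho)=(-1)^{\dim\rho}\cdot\left(\frac{\alpha_\rho}{N_E}\right)\cdot\prod_{p\mid N_E}w(E/\mathbb{Q}_p)^{\dim\rho}\cdot\prod_{p\mid\mathfrak{f}_\rho}w(\rho_p)^{2}.
\end{equation}
The factor $\prod_{p\mid\mathfrak{f}_\rho}w(\rho_p)^2$ collapses to $\text{sign}(\alpha_\rho)$ via the global product formula $\prod_v w(\rho_v)=\pm 1$ for the self-dual Artin representation $\rho$, squared, combined with $w(\rho_v)=+1$ at unramified finite places and the explicit computation $w(\text{sign}_\infty)^{2b}=(-1)^b=\text{sign}(\alpha_\rho)$. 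Finally, rewriting $\prod_{p\mid N_E}w(E/\mathbb{Q}_p)^{\dim\rho}=(-1)^{\dim\rho}w(E/\mathbb{Q})^{\dim\rho}$ (using $w(E/\mathbb{R})=-1$) gives the claimed formula. The main obstacle will be the careful bookkeeping of normalisations: most notably, justifying that the local contribution at $p\mid\mathfrak{f}_\rho$ equals $w(\rho_p)^2$ requires converting between the motivic normalisation of $\sigma_E$ (whose Frobenius eigenvalues have absolute value $\sqrt p$) and a unitary one, and reconciling the $p=2$ conventions of the Jacobi symbol in the statement with the value of $\det(\rho)$ at Frobenius (when $2\mid N_E$) or with the local root number of $\rho$ at $2$ (when $2\mid\mathfrak{f}_\rho$).
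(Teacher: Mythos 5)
This result is stated in the paper only as a citation to \cite{vladtwists} Corollary 2, so there is no in-paper proof to compare against; the comparison must be with the cited reference, where the argument indeed proceeds exactly along the lines you describe. Your sketch is essentially correct: decompose $w(E/\mathbb{Q},\rho)$ into local factors, apply Deligne's inductivity formula for $\epsilon$-factors with one unramified constituent at each finite place (the coprimality hypothesis guaranteeing this is always the case), identify the contribution at $p\mid N_E$ with $\det(\rho)(\textup{Frob}_p)^{a_p(E)}\cdot w(E/\mathbb{Q}_p)^{\dim\rho}$ and hence with $\left(\tfrac{\alpha_\rho}{p}\right)^{a_p(E)}$, identify the contribution at $p\mid\mathfrak{f}_\rho$ with $w(\rho_p)^2$ (the $\det(\sigma_E)(\textup{Frob}_p)$-power is a positive real number and so drops out upon passing to the root number, which is what your ``unitary normalisation'' remark is really doing), compute $w(E/\mathbb{R},\rho)=(-1)^{\dim\rho}$ via the projection formula for $\textup{Ind}_{W_\mathbb{C}}^{W_\mathbb{R}}$, and finally use $W(\rho)^2=1$ for the self-dual Artin representation $\rho$, together with $w(\textup{sgn}_\infty)^2=-1$ and the fact that the parity of the number of sign characters in $\rho_\infty$ is the sign of $\alpha_\rho$, to collapse $\prod_{p\mid\mathfrak{f}_\rho}w(\rho_p)^2$ to $\textup{sign}(\alpha_\rho)$. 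The flagged subtleties (the positivity of $\det(\sigma_E)(\textup{Frob}_p)$, and the $p=2$ Jacobi-symbol convention, which is engineered so that $\left(\tfrac{\alpha_\rho}{2}\right)=\det(\rho)(\textup{Frob}_2)$ when $2\mid N_E$ and $2$ is unramified in $\mathbb{Q}(\sqrt{\alpha_\rho})$) are genuine bookkeeping points but do not create gaps. This is the same route as the cited proof.
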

	
	Note that if $2\mid N_E$ then $2$ does not ramify in $\mathbb{Q}(\sqrt{\alpha_\rho})$ since we assumed the conductor of $\rho$ to be coprime to the conductor of $E$, and hence $\alpha_\rho\equiv 1 \text{ or } 5 \pmod{8}$. For a statement that allows multiplicative reduction at primes dividing the conductor of $\rho$, see \cite{vladtwists} Theorem 1. Local root numbers of Artin twists of elliptic curves have been classified by Rohrlich (with restrictions in residue characteristic $2$ and $3$), see \cite{rohrlich} Theorem 2. 
	
	We present the following important properties of global root numbers of twists, which are also of use when predicting the existence of rational points of infinite order using the parity conjecture for twists (see Example \ref{twistsexample}). For reference, $(i)$ and $(ii)$ follow from the definition of local root numbers (see \cite{deligne} for more details) and $(iii)$ and $(iv)$ can be found in \cite{regconst} Proposition A.2.
	
	\begin{theorem}\label{artinformalism}
		Let $E$ be an elliptic curve over a number field $K$. Let $\rho$ and $\rho'$ be Artin representations of $G_K=\textup{Gal}(\bar{K}/K)$. Let $L/K$ be a finite extension and let $\tau$ be an Artin representation of $G_L=\textup{Gal}(\bar{L}/L)$. Then 
		\begin{enumerate}[(i)]
			\item $w(E/K,\mathds{1})=w(E/K)$;
			\item $w(E/K,\rho\oplus\rho')=w(E/K,\rho)\cdot w(E/K,\rho')$;
			\item $w(E/K,\emph{\text{Ind}}_{G_L}^{G_K}\tau)=w(E/L,\tau)$ and, in particular, $w(E/L)=w(E/K,\emph{\text{Ind}}_{{G_L}}^{G_K} \mathds{1})$;
			\item $w(E/K,\rho\oplus\rho^*)=+1$, where $\rho^*$ is the dual representation.
		\end{enumerate}
	\end{theorem}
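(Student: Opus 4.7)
The plan is to derive all four identities from Deligne's formalism for local epsilon-factors of Weil--Deligne representations, applied at every place of $K$. Denote by $V_v$ the Weil--Deligne representation attached to $E/K_v$; by definition, $w(E/K_v,\rho)$ is the sign of the epsilon-factor of $V_v\otimes\rho|_{G_{K_v}}$ (with the standard additive character and Haar measure normalisations), and $w(E/K,\rho)$ is the product of these local signs over all places $v$.

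Parts (i) and (ii) are formal. For (i), taking $\rho=\mathds{1}$ gives $V_v\otimes\mathds{1}=V_v$ at every place, so the product recovers $w(E/K)$. For (ii), the distributivity
\begin{equation}
V_v\otimes(\rho\oplus\rho')|_{G_{K_v}}\;\cong\;(V_v\otimes\rho|_{G_{K_v}})\oplus(V_v\otimes\rho'|_{G_{K_v}})
\end{equation}
together with the elementary additivity of local epsilon-factors on direct sums gives local multiplicativity, and the product over $v$ yields the global statement.

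The main obstacle is (iii). For a place $w$ of $L$ above $v$ of $K$, the Weil--Deligne representation of $E/L_w$ agrees with $V_v|_{G_{L_w}}$ by base-change compatibility, and combining Mackey decomposition with the projection formula gives
\begin{equation}
V_v\otimes(\textup{Ind}_{G_L}^{G_K}\tau)\big|_{G_{K_v}}\;\cong\;\bigoplus_{w\mid v}\textup{Ind}_{G_{L_w}}^{G_{K_v}}\!\bigl(V_v|_{G_{L_w}}\otimes\tau|_{G_{L_w}}\bigr).
\end{equation}
Individual local epsilon-factors are \emph{not} inductive --- induction introduces Langlands $\lambda$-factors --- so one cannot match place-by-place. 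The crucial input, which I would quote rather than reprove, is Deligne's inductivity in degree zero: the $\lambda$-factors themselves satisfy a product formula and therefore cancel globally. Taking the product over all $v$ then yields $w(E/K,\textup{Ind}_{G_L}^{G_K}\tau)=w(E/L,\tau)$, and the parenthetical statement is the specialisation $\tau=\mathds{1}$.

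For (iv), I would combine (ii) with the duality $w(E/K,\rho^*)=\overline{w(E/K,\rho)}$, which follows place-by-place from the fact that $V_v$ is essentially self-dual (it is a $2$-dimensional symplectic Weil--Deligne representation up to a Tate twist, and the Tate twist contributes trivially to root numbers) together with the standard complex-conjugation identity $\epsilon(\sigma^*)=\overline{\epsilon(\sigma)}$ for unitarily-normalised epsilon-factors. Hence
\begin{equation}
w(E/K,\rho\oplus\rho^*)\;=\;w(E/K,\rho)\cdot\overline{w(E/K,\rho)}\;=\;|w(E/K,\rho)|^2\;=\;+1,
\end{equation}
since the global root number has absolute value $1$.
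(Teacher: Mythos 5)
Your proof is correct. The paper does not itself establish this theorem: it notes that (i) and (ii) follow from the definitions in \cite{deligne} and refers to \cite{regconst}, Proposition A.2, for (iii) and (iv). Your sketch reconstructs, in somewhat more detail than the paper gives, the standard argument from Deligne's epsilon-factor formalism that underlies those references: formal multiplicativity for (i)--(ii); Mackey decomposition with the projection formula together with inductivity in degree zero (so that the Langlands $\lambda$-factors cancel in the global product) for (iii); and duality combined with $|w(E/K,\rho)|=1$ for (iv). You are right to quote inductivity in degree zero rather than attempt it, since that is the one genuinely nontrivial ingredient. The only step that deserves an extra word is the local duality identity in (iv): the general relation is $\epsilon(\sigma^*,\psi,dx)=\det(\sigma)(-1)\,\overline{\epsilon(\sigma,\psi,dx)}$, not $\epsilon(\sigma^*)=\overline{\epsilon(\sigma)}$ outright. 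This does not harm your argument, because with $\sigma=V_v\otimes\rho|_{G_{K_v}}$ one has $\det(\sigma)(-1)=\det(V_v)(-1)^{\dim\rho}\cdot\det(\rho)(-1)^{2}=1$: the determinant of the Weil--Deligne representation of $E$ corresponds under local class field theory to the norm character, which is trivial on the unit $-1$, and $\det(\rho)(-1)=\pm1$ appears squared. Alternatively one may dispense with the place-by-place cancellation and note that the extra signs multiply globally to $\det\rho(-1)$ evaluated on the global element $-1\in K^\times$, hence to $1$.
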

	
	We can relate the rank over subfields of $F$ to the representation $E(F)\otimes_\mathbb{Z}\mathbb{C}$ using the following lemma. 
	
	\begin{lemma}\label{frob}
		Let $E$ be an elliptic curve over $K$ and let $F$ be a Galois extension with $G=\textup{Gal}(F/K)$. Then for every $H\leq G$,
		\begin{equation}
			\textup{rk}(E/F^H)=\langle\mathbb{C}[G/H],E(F)\otimes_\mathbb{Z}\mathbb{C}\rangle_G. 
		\end{equation}
	\end{lemma}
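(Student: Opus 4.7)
The proof reduces to two standard inputs: identifying $E(F^H)$ with the $H$-invariants $E(F)^H$, and then applying Frobenius reciprocity to interpret a dimension of invariants as a multiplicity of an induced representation.

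First, since $F/K$ is Galois with group $G$, the intermediate field $F^H$ is exactly the fixed field of $H$, so a point $P\in E(F)$ is defined over $F^H$ if and only if $\sigma(P)=P$ for every $\sigma\in H$. This gives the equality $E(F^H)=E(F)^H$ as abelian groups. Tensoring with $\mathbb{C}$ is exact, and since $\mathbb{C}$ kills torsion, taking $H$-invariants commutes with $-\otimes_\mathbb{Z}\mathbb{C}$ (equivalently, $|H|$ is a unit in $\mathbb{C}$, so the averaging projector $\frac{1}{|H|}\sum_{h\in H}h$ is available). Hence
\begin{equation}
\textup{rk}(E/F^H)=\dim_{\mathbb{C}}\bigl(E(F^H)\otimes_{\mathbb{Z}}\mathbb{C}\bigr)=\dim_{\mathbb{C}}\bigl(E(F)\otimes_{\mathbb{Z}}\mathbb{C}\bigr)^{H}.
\end{equation}

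Second, for any finite-dimensional $\mathbb{C}[G]$-representation $V$ one has the standard identity $\dim_{\mathbb{C}} V^{H}=\langle \mathds{1}_{H},\textup{Res}^{G}_{H} V\rangle_{H}$, proved by decomposing $V|_{H}$ into irreducibles and counting copies of the trivial representation. Applying this to $V=E(F)\otimes_{\mathbb{Z}}\mathbb{C}$ and then invoking Frobenius reciprocity gives
\begin{equation}
\dim_{\mathbb{C}}\bigl(E(F)\otimes_{\mathbb{Z}}\mathbb{C}\bigr)^{H}=\langle \mathds{1}_{H},\textup{Res}^{G}_{H}(E(F)\otimes_{\mathbb{Z}}\mathbb{C})\rangle_{H}=\langle \textup{Ind}^{G}_{H}\mathds{1}_{H}, E(F)\otimes_{\mathbb{Z}}\mathbb{C}\rangle_{G}.
\end{equation}
Since $\textup{Ind}_{H}^{G}\mathds{1}_{H}\cong \mathbb{C}[G/H]$ as $\mathbb{C}[G]$-modules, combining the two displays yields the claim.

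There is no real obstacle: the only subtle point is checking that invariants commute with tensoring by $\mathbb{C}$ (handled by the averaging idempotent or by noting that torsion is killed), and everything else is the formal character-theoretic machinery of finite groups. If desired, one can make the proof completely self-contained by writing $E(F)\otimes_{\mathbb{Z}}\mathbb{C}\cong \bigoplus_{\rho}\rho^{\oplus\langle\rho,E(F)\otimes\mathbb{C}\rangle}$ and noting that $\dim\rho^{H}=\langle\mathbb{C}[G/H],\rho\rangle_{G}$ for each irreducible $\rho$ by Frobenius reciprocity, then summing.
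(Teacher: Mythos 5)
Your proof is correct and follows essentially the same route as the paper's: identify $\textup{rk}(E/F^H)$ with $\dim_\mathbb{C}(E(F)\otimes_\mathbb{Z}\mathbb{C})^H$, express this as $\langle\mathds{1}_H,\textup{Res}^G_H(E(F)\otimes_\mathbb{Z}\mathbb{C})\rangle_H$, and apply Frobenius reciprocity together with $\textup{Ind}_H^G\mathds{1}\cong\mathbb{C}[G/H]$. You supply slightly more detail (spelling out $E(F^H)=E(F)^H$ and that invariants commute with $-\otimes_\mathbb{Z}\mathbb{C}$), but the argument is the same.
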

	
	\begin{proof}
		For a subgroup $H\leq G$, we have $\textup{rk}(E/F^H)={\textup{{dim}}} (E(F)\otimes_\mathbb{Z}\mathbb{C})^H$ and 
		\begin{equation}
			{\textup{{dim}}} (E(F)\otimes_\mathbb{Z}\mathbb{C})^H =\langle \mathds{1},\textup{Res}_H^G E(F)\otimes_\mathbb{Z}\mathbb{C}\rangle_H 
			\stackrel{(1)}{=}\langle\textup{Ind}_{H}^G\mathds{1},E(F)\otimes_\mathbb{Z}\mathbb{C}\rangle_G=\langle\mathbb{C}[G/H],E(F)\otimes_\mathbb{Z}\mathbb{C}\rangle_G,
		\end{equation}
		where $(1)$ follows from Frobenius reciprocity. 
	\end{proof}
	
	The following lemma tells us that in many cases (for instance symmetric groups) the parity conjecture for twists follows from the usual parity conjecture. 
	
	\begin{lemma}\label{permutation}
		Let $E/K$ be an elliptic curve and let $F/K$ be a Galois extension of number fields. Let $\rho$ be a representation of $G=\textup{Gal}(F/K)$ that can be written as a linear combination of permutation modules i.e.
		\begin{equation}
			\rho\oplus\bigoplus_{i}\textup{Ind}_{H_i}^G\mathds{1}\simeq \bigoplus_{j}\textup{Ind}_{H_j'}^G\mathds{1}
		\end{equation}
		for some $H_i$, $H_j'\leq G$. If the parity conjecture holds for $E$ over $L_i=F^{H_i}$ for all $i$ and $L_j'=F^{H_j'}$ for all $j$, then the twisted parity conjecture holds for $E$ and $\rho$.
	\end{lemma}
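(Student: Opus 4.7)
The plan is to take the given isomorphism of representations and apply two dictionaries to it: one that converts $\text{Ind}_H^G \mathds{1}$-summands into root numbers via Theorem \ref{artinformalism}, and one that converts them into Mordell--Weil ranks via Lemma \ref{frob}. The hypothesised parity conjectures for $E/L_i$ and $E/L_j'$ then bridge the two sides.

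More concretely, first I would take root numbers of the given isomorphism. By parts (ii) and (iii) of Theorem \ref{artinformalism}, the equation
\begin{equation}
\rho \oplus \bigoplus_i \textup{Ind}_{H_i}^G \mathds{1} \simeq \bigoplus_j \textup{Ind}_{H_j'}^G \mathds{1}
\end{equation}
yields
\begin{equation}
w(E/K,\rho) \cdot \prod_i w(E/L_i) = \prod_j w(E/L_j'),
\end{equation}
so, since all root numbers are $\pm 1$,
\begin{equation}
w(E/K,\rho) = \prod_i w(E/L_i) \cdot \prod_j w(E/L_j').
\end{equation}

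Next I would take the inner product of the same isomorphism with $E(F) \otimes_\mathbb{Z} \mathbb{C}$. By Lemma \ref{frob}, $\langle \textup{Ind}_{H_i}^G \mathds{1}, E(F) \otimes_\mathbb{Z} \mathbb{C} \rangle_G = \textup{rk}(E/L_i)$, and similarly for the $H_j'$, so
\begin{equation}
\langle \rho, E(F) \otimes_\mathbb{Z} \mathbb{C} \rangle_G + \sum_i \textup{rk}(E/L_i) = \sum_j \textup{rk}(E/L_j').
\end{equation}
Raising $-1$ to this integer and applying the parity conjecture (assumed for $E$ over each $L_i$ and each $L_j'$) gives
\begin{equation}
(-1)^{\langle \rho, E(F) \otimes_\mathbb{Z} \mathbb{C} \rangle_G} = \prod_i (-1)^{\textup{rk}(E/L_i)} \cdot \prod_j (-1)^{\textup{rk}(E/L_j')} = \prod_i w(E/L_i) \cdot \prod_j w(E/L_j'),
\end{equation}
which matches the expression for $w(E/K,\rho)$ obtained above. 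Comparing the two gives the twisted parity conjecture for $E$ and $\rho$.

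There is no real obstacle here — the argument is essentially a bookkeeping exercise combining Theorem \ref{artinformalism}(ii)(iii) with Lemma \ref{frob}. The only thing worth noting is that $\rho$ is automatically self-dual: each $\textup{Ind}_{H}^G \mathds{1}$ is self-dual, so by Brauer-style cancellation in the representation ring one sees that $\rho^* \simeq \rho$, which is needed for $w(E/K,\rho)$ to be a well-defined sign in the first place.
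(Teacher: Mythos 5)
Your argument is correct and is essentially the same as the paper's: both combine Theorem \ref{artinformalism}(ii)(iii) to turn the permutation-module identity into an identity of root numbers, and Lemma \ref{frob} to turn it into an identity of ranks, with the assumed parity conjecture over the $L_i$ and $L_j'$ bridging the two. The observation that $\rho$ is automatically self-dual is a nice sanity check the paper leaves implicit, but otherwise the two proofs differ only in bookkeeping order.
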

	
	\begin{proof}
		By Theorem \ref{artinformalism} and assuming the usual parity conjecture for $L_i$ and $L_j$,
		\begin{equation}
			w(E/K,\rho)=\frac{\prod_j w(E/K,\textup{Ind}_{H_j'}^G\mathds{1})}{\prod_i w(E/K,\textup{Ind}_{H_i}^G\mathds{1})}=\frac{\prod_jw(E/L_j)}{\prod_i w(E/L_i)}=(-1)^{\sum_j \text{rk}(E/L_j)-\sum_i\text{rk}(E/L_i)}.
		\end{equation}
		By Lemma \ref{frob},
		\begin{align}
			\sum_j \text{rk}(E/L_j)-\sum_i\text{rk}(E/L_i)
			=\sum_j \langle E(F)\otimes_\mathbb{Z}\mathbb{C},\textup{Ind}_{H_j'}^G\mathds{1}\rangle-\sum_i \langle E(F)\otimes_\mathbb{Z}\mathbb{C},\textup{Ind}_{H_i}^G\mathds{1}\rangle = \langle E(F)\otimes_\mathbb{Z}\mathbb{C},\rho \rangle. 
		\end{align}
		Thus the parity conjecture for twists holds for $E$ and $\rho$.
	\end{proof}
	
	When the condition of Lemma \ref{permutation} is not satisfied, the parity conjecture for twists gives us more information about the arithmetic of elliptic curves than the usual parity conjecture, as we now illustrate. 
	
	\begin{example}\label{twistsexample}
		Let us take the elliptic curve of rank $1$ over $\mathbb{Q}$ 
		\begin{equation}
			E:y^2+y=x^3-x. \hspace{10pt} (37A1)
		\end{equation}
		Let $F$ be the splitting field of the polynomial
		\begin{equation}
			x^{10} - x^9 + 6x^8 - 3x^7 + 11x^6 - 3x^5 + 11x^4 - 3x^3 + 6x^2 - x + 1,
		\end{equation}
		which has discriminant $-47^5$. Then $G=\text{Gal}(F/\mathbb{Q})=D_{10}$, the dihedral group with $10$ elements. We can predict the existence of points of infinite order on $E/F$ using the parity conjecture for twists as follows. 
		
		The subgroups of $D_{10}$, up to conjugacy, are the trivial group, $C_2$, $C_5$ and $D_{10}$. The irreducible representations of $D_{10}$ are the trivial representation $\mathds{1}$, sign representation $\epsilon$, and two $2$-dimensional representations $\rho_1$ and $\rho_2$. Now let $V=E(F)\otimes_\mathbb{Z}\mathbb{C}$. This is a $G$-representation so we can decompose it as
		\begin{equation}
			V=\mathds{1}^{\oplus a}\oplus\epsilon^{\oplus b}\oplus\rho_1^{\oplus c}\oplus\rho_2^{\oplus d}.
		\end{equation}
		Since the character of this representation is rational, $\rho_1$ and $\rho_2$ must appear with the same multiplicity as their characters are $G_\mathbb{Q}$-conjugate. So we have $V=\mathds{1}^{\oplus a}\oplus\epsilon^{\oplus b}\oplus\rho_1^{\oplus c}\oplus \rho_2^{\oplus c}$. By Lemma \ref{frob}, for any subgroup $H\leq \text{Gal}(F/\mathbb{Q})$,
		\begin{equation}
			\text{rk}(E/F^H)=\text{dim} V^H=\langle\mathbb{C}[G/H],V\rangle_G.
		\end{equation}
		Observe that 
		\begin{equation}
			\mathbb{C}[D_{10}/D_{10}]=\mathds{1}, \hspace{20pt}
			\mathbb{C}[D_{10}/C_5]=\mathds{1}\oplus\epsilon,\hspace{20pt}
			\mathbb{C}[D_{10}/C_2]=\mathds{1}\oplus\rho_1\oplus\rho_2,\hspace{20pt}
			\mathbb{C}[D_{10}]=\mathds{1}\oplus\epsilon\oplus\rho_1^{\oplus2}\oplus\rho_2^{\oplus2}. 
		\end{equation}
		Hence,
		\begin{equation}
			\text{rk}(E/F^H)=\begin{cases}
				a & \text{ when } H=D_{10}; \\
				a+b & \text{ when } H=C_5; \\
				a+2c & \text{ when } H=C_2; \\
				a+b+4c & \text{ when } H=\{1\}.
			\end{cases}
		\end{equation}
		$E$ has non-split multiplicative reduction at $37$ and good reduction everywhere else. Since $37$ splits in $F^{C_5}=\mathbb{Q}(\sqrt{-47})$, by Corollary \ref{semistable} we find that
		\begin{equation}
			w(E/\mathbb{Q})=-1 \quad \text{and} \quad w(E/\mathbb{Q}(\sqrt{-47}))=-1.
		\end{equation}
		So the parity conjecture and our calculations above imply that the rank of $E$ is odd over all four intermediate fields. In particular, we could in principle have $\text{rk}(E/F)=\text{rk}(E/\mathbb{Q})$.
		
		We now use the parity conjecture for twists to show that $c$ is odd, and hence $\text{rk}(E/F)\geq \text{rk}(E/\mathbb{Q})+4\geq 5$. Using the notation of Theorem \ref{rootnostwists}, we have $N_E=37$ and $\alpha_{\rho_1}=-47$ since $\text{det}(\rho_1)=\epsilon$. Hence, by Theorem \ref{rootnostwists}, 
		\begin{equation}
			w(E/\mathbb{Q},\rho_1)=(-1)^2\cdot (-1)\cdot \left(\frac{-47}{37} \right)=-1.
		\end{equation}
		Thus, since $\langle  \rho_1,E(F)\otimes_\mathbb{Z}\mathbb{C}\rangle=c$, the parity conjecture for twists tells us that $c$ must be odd and hence $\text{rk}(E/F)\geq 5$. In fact, assuming the parity conjecture for twists, the rank of $E/F$ is larger than the rank of $E$ over any of the subfields.
	\end{example}

	\section{Parity phenomena}\label{examples}
	
	We now turn to examples of phenomena that are predicted by the parity conjecture as a result of root number calculations. Subsections \ref{towersection} and \ref{sqrtK}--\ref{evenartin} deal with consequences of the parity conjecture for Artin twists. We stress that all `parity phenomena' that we describe, including the statements in all of the subsection titles, are conditional on the parity conjecture and are effectively unsolved problems. Most of these have been observed before; only \S\ref{represented}, \S\ref{sqrtK} and \S\ref{snsection} are new. The titles are designed to give specific examples of what will be discussed; the general statements will be contained within the subsections. The subsections are largely independent of one another and can be read in almost any order. 
	
	\subsection{Every $E/\mathbb{Q}$ has even rank over $\mathbb{Q}(\sqrt{-3},\sqrt{13})$}\label{parityex}
	
	\begin{example}[See also \cite{timparity}, Conjecture 8.7]\label{17}
		The parity conjecture predicts that every elliptic curve $E/\mathbb{Q}$ has even rank over $K=\mathbb{Q}(\sqrt{-3},\sqrt{13})$. To see this, we note that every rational prime splits into an even number of primes in $K$. Indeed, the decomposition group of rational primes away from the discriminant of $K$ is cyclic and so cannot be isomorphic to $C_2\times C_2$. Hence these primes split into $2$ or $4$ primes in $K$. The primes that divide the discriminant of $K$ are $3$ and $13$, and we note that $3$ splits in $\mathbb{Q}(\sqrt{13})$ and $13$ splits in $\mathbb{Q}(\sqrt{-3})$. So, $3$ and $13$ each split into two primes in $K$. Thus,
		\begin{equation}
			w(E/K)=\prod_{v\mid\infty}w(E/K_v)\cdot\prod_{v\mid N_E}w(E/K_v) =w(E/\mathbb{C})^2\cdot\prod_{p\mid N_E}\prod_{v\mid p}w(E/K_v)=(-1)^2\cdot\prod_{p\mid N_E}(\pm 1)^{2\text{ or }4}=+1,
		\end{equation}
		where $v$ runs over primes of $K$ and $p$ runs over primes of $\mathbb{Q}$. Here we have used the fact that if $v$ and $v'$ lie above the same prime $p$, then $w(E/K_v)=w(E/K_{v'})$ by Lemma \ref{2primes}. Assuming the parity conjecture, we deduce that $E$ has even rank over $K$.
	\end{example}
	
	The same reasoning gives us the following lemma. 
	
	\begin{lemma}\label{even}
		Let $E/K$ be an elliptic curve and let $F$ be a finite Galois extension of $K$ in which every place of $K$ splits into an even number of places. Then, assuming the parity conjecture, $E$ has even rank over $F$. 
	\end{lemma}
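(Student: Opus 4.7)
The plan is to mimic the calculation in Example~\ref{17} verbatim, but in the abstract setting of the hypothesis. First, I would write the global root number as a product over places of $F$ and then regroup it as a double product indexed first by the places of $K$:
\begin{equation}
w(E/F) = \prod_{v \text{ of } K} \;\prod_{w \mid v} w(E/F_w).
\end{equation}

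Next, I would fix a place $v$ of $K$ and analyse the inner product. By Lemma~\ref{2primes}, the quantity $w(E/F_w)$ depends only on $v$, not on the choice of $w \mid v$, so the inner product equals $w(E/F_{w_0})^{n_v}$ for any fixed $w_0 \mid v$, where $n_v$ is the number of places of $F$ above $v$. By hypothesis $n_v$ is even, and since $w(E/F_{w_0}) \in \{\pm 1\}$, the inner product is $+1$. This works uniformly for finite and infinite $v$: for an archimedean place $v$, Theorem~\ref{rootnos}(i) still gives $w(E/F_{w_0}) = -1$, but raised to an even power this is also $+1$.

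Combining these contributions yields $w(E/F) = +1$, and invoking the parity conjecture for $E/F$ concludes that $\mathrm{rk}(E/F)$ is even. There is no genuine obstacle here; the only point that deserves a sentence of care is to verify that the hypothesis applies to \emph{all} places (including the archimedean ones) so that no stray factor of $-1$ survives the regrouping. Everything else is a direct rerun of Example~\ref{17}.
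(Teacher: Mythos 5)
Your proof is correct and is essentially the paper's own argument made explicit: the paper states Lemma~\ref{even} immediately after Example~\ref{17} with the comment that ``the same reasoning'' applies, and your decomposition of $w(E/F)$ into a double product over places of $K$, collapsing each inner product to an even power of $\pm 1$ via Lemma~\ref{2primes} (with Theorem~\ref{rootnos}(i) handling the archimedean contribution), is precisely that reasoning.
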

	
	\begin{example}\label{3,13}
		Let 
		\begin{equation}
			E:y^2+xy=x^3-x^2+4x-3. \hspace{10pt}\text{ \text{(73A1)} }
		\end{equation}
		Since $E$ has split multiplicative reduction at $73$, Corollary \ref{semistable} tells us that $w(E/\mathbb{Q})=+1$. This curve has a fairly obvious point over $K=\mathbb{Q}(\sqrt{-3},\sqrt{13})$, namely $(0,\sqrt{-3})$, which turns out to have infinite order. Example \ref{17} tells us that $\text{rk}(E/K)$ is even, so there should be another (independent) point of infinite order over $K$. In effect, the existence of a point of infinite order over $\mathbb{Q}(\sqrt{-3})$ forces the existence of another point of infinite order over $\mathbb{Q}(\sqrt{-3})$, $\mathbb{Q}(\sqrt{13})$ or $\mathbb{Q}(\sqrt{-39})$. In this case, local root number calculations show that $w(E/\mathbb{Q}(\sqrt{13}))=-1$, so, assuming the parity conjecture, there should also be a $K$-rational point of infinite order coming from points on $E$ over $\mathbb{Q}(\sqrt{13})$. Indeed, the point $\left(3,-\frac{3\sqrt{13}+3}{2}\right)$ lies on $E$ and generates the infinite part of the Mordell-Weil group of $E/\mathbb{Q}(\sqrt{13})$. It is not at all clear how the points over $\mathbb{Q}(\sqrt{13})$ relate to those over $\mathbb{Q}(\sqrt{-3})$, and why the existence in one of the fields guarantees the existence in the other. 
	\end{example}

	\subsection{Every $E/\mathbb{Q}$ has even rank over any $C_2\times C_2 \times C_2 \times C_2$ extension of $\mathbb{Q}$}\label{c2c2}
	
	Lemma \ref{even} also gives us the following example. 
	
	\begin{example}\label{c2}
		Let $K$ be any Galois extension of $\mathbb{Q}$ with Galois group $C_2^d$, where $d\geq 4$ and let $E$ be an elliptic curve defined over $\mathbb{Q}$. We claim that $E$ should have even rank over $K$. As in Example \ref{17}, all unramified primes split into an even number of primes in $K$. We claim that every ramified prime splits into an even number of primes in $K$, too. If $p>2$, note that the largest extension of $\mathbb{Q}_p$ we can get when we localise at a prime $v$ above $p$ in $K$ has Galois group $C_2\times C_2$ since $\mathbb{Q}_p^\times/{\mathbb{Q}_p^{\times 2}}\cong C_2\times C_2$ for $p>2$. Hence the decomposition group of $v\mid p$ has size at most $4$ and so there must be at least four primes above $p$ in $K$. Similarly for $p=2$, the largest extension of $\mathbb{Q}_2$ we can get when we localise at a prime $v$ above $2$ has Galois group $C_2\times C_2\times C_2$ and so the decomposition group of $v\mid 2$ has size at most $8$ and there are at least two primes above $2$ in $K$. Since $K$ has an even number of infinite places, Lemma \ref{even} tells us that, assuming the parity conjecture, $E$ has even rank over $K$.
	\end{example}
	
	\begin{remark}
		The parity conjecture implies that if $E/\mathbb{Q}$ has a point of infinite order over a $C_2\times C_2\times C_2\times C_2$-extension of $\mathbb{Q}$, it must automatically acquire a second one. This suggests that over any $C_2\times C_2\times C_2\times C_2$-extension of $\mathbb{Q}$, there might be some as yet unknown extra symmetry, for instance in the Mordell--Weil group, that would explain why the rank is even. As Example \ref{3,13} illustrates, it is not clear what such a symmetry might be. 
	\end{remark}

	\subsection{$y^2+xy=x^3+Ax+B$ where $A\equiv B\pmod{2}$ has infinitely many solutions over $\mathbb{Q}(\zeta_8)$}
	
	\begin{example}\label{8}
		The parity conjecture predicts that every elliptic curve $E/\mathbb{Q}$ with split multiplicative reduction at $2$ has infinitely many rational points over $K=\mathbb{Q}(\zeta_8)$. Here, the only ramified prime is $2$, which is totally ramified. So, if $E$ has split multiplicative reduction at $2$, for the unique prime $v$ above $2$ in $K$ we have $w(E/K_v)=-1$ by Theorem \ref{rootnos}. Note that $\text{Gal}(K/\mathbb{Q})\cong C_2\times C_2$, so in the same vein as Example \ref{17}, all primes away from $2$ split into an even number of primes in $K$, which means their product contributes a $+1$ to the root number. There are two infinite places and so these also contribute a $+1$ to the root number. Hence, $w(E/K)=-1$ and the parity conjecture predicts that $E$ has odd rank over $K$. In particular, it says that $E$ has infinitely many $\mathbb{Q}(\zeta_8)$-rational points. We emphasise that it is not at all clear how to construct these points!
	\end{example}
	
	The above working tells us that every elliptic curve over $\mathbb{Q}$ of the form $E:y^2+xy=x^3+Ax+B$, where $A\equiv B\pmod{2}$, has infinitely many points over $\mathbb{Q}(\zeta_8)$ since every such curve has split multiplicative reduction at $2$. The example easily generalises to the following statement. 
	
	\begin{lemma}
		Suppose $K/\mathbb{Q}$ is a biquadratic extension and that exactly one prime $p$ does not split in $K/\mathbb{Q}$. If the parity conjecture holds, then every elliptic curve $E/\mathbb{Q}$ with split multiplicative reduction at $p$ has a point of infinite order over $K$. 
	\end{lemma}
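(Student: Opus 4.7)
The approach is to mimic the root number calculation of Example \ref{8} step by step. I compute $w(E/K) = \prod_v w(E/K_v)$ by grouping local root numbers according to the rational prime (or archimedean place) beneath them, show the total product equals $-1$, and then invoke the parity conjecture to conclude that $\text{rk}(E/K)$ is odd, in particular positive.

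The archimedean contribution is easy: since $K/\mathbb{Q}$ is a degree-$4$ Galois extension, complex conjugation either acts trivially on $K$ or lies in $\text{Gal}(K/\mathbb{Q})$ as a nontrivial element, so $K$ is either totally real (with $4$ real places) or totally imaginary (with $2$ complex places). Either way the number of archimedean places is even, so by Theorem \ref{rootnos}(i) they contribute $(-1)^{\text{even}} = +1$. For a finite prime $q \neq p$, the hypothesis says $q$ splits in $K$, so the number $g_q$ of primes of $K$ above $q$ is at least $2$; since $g_q \mid [K:\mathbb{Q}] = 4$, we get $g_q \in \{2,4\}$, both even. By Lemma \ref{2primes} all the local root numbers $w(E/K_v)$ for $v \mid q$ coincide, so their product is $(\pm 1)^{g_q} = +1$.

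For the prime $p$ there is, by hypothesis, a unique $v \mid p$ in $K$. The key local input I need is that split multiplicative reduction is preserved under arbitrary base change, which is immediate from the Tate parametrisation $E \cong \mathbb{G}_m/q_E^{\mathbb{Z}}$: this description descends to the ground field precisely in the split case, and persists over any extension. Hence $E/K_v$ still has split multiplicative reduction and Theorem \ref{rootnos}(iii) gives $w(E/K_v) = -1$. Multiplying the three contributions yields $w(E/K) = -1$, and the parity conjecture completes the proof.

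There is no genuine obstacle here; the lemma is essentially bookkeeping once Example \ref{8} is understood. The only point requiring a moment of care is the observation that in a $C_2 \times C_2$ Galois extension, $g_q \neq 1$ automatically forces $g_q$ to be even, which is just the divisibility $g_q \mid 4$ combined with the inequality $g_q \geq 2$.
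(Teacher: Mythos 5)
Your proof is correct and is exactly the generalization the paper has in mind: the paper states this lemma immediately after Example \ref{8} with the remark that the example ``easily generalises,'' but supplies no separate argument, and your writeup fills in precisely those bookkeeping steps (even archimedean count for a $C_2\times C_2$ Galois field, evenness of $g_q$ for $q\neq p$ combined with Lemma \ref{2primes}, persistence of split multiplicative reduction under base change at the unique prime above $p$). The only cosmetic difference is that you phrase the split-multiplicative-under-base-change step via the Tate parametrisation, whereas the paper is implicitly using the same fact without comment; both are fine.
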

	
	\begin{example}
		The curves 
		\begin{enumerate}[(i)]
			\item $E:y^2+xy=x^3-3x+1$ (34A1), 
			\item $E':y^2+xy=x^3-34x+68$ (102B1) and  
			\item $E'':y^2+xy=x^3-2x-2$ (922A1)
		\end{enumerate}
		each have split multiplicative reduction at $2$, so by Example \ref{8}, the parity conjecture implies that these curves all have infinitely many points over $\mathbb{Q}(\zeta_8)$. In fact, all of these curves have global root number $+1$ over $\mathbb{Q}$, so the rank must grow in $\mathbb{Q}(\zeta_8)$. One can perform descent calculations to show that the rank of each of these curves over $\mathbb{Q}(\zeta_8)$ is $1$. However, the point comes from a different quadratic subfield for each curve: $E$ has a point of infinite order over $\mathbb{Q}(\sqrt{2})$, $E'$ has a point of infinite order over $\mathbb{Q}(\sqrt{-1})$ and $E''$ has a point of infinite order over $\mathbb{Q}(\sqrt{-2})$.
	\end{example}
	
	\begin{remark}[See also \cite{modn} Remark 4]
		Consider the elliptic curve $E:y^2=x^3+x^2-x$, which has Kodaira type $\rm{IV}$ over $\mathbb{Q}_2$. By computing root numbers of the quadratic twists of $E$ by $-1$, $2$ and $-2$ and using Lemma \ref{quadrootnos}, we find that $w(E/\mathbb{Q}(\zeta_8))=-1$. The parity conjecture predicts that $E$ has odd rank over $\mathbb{Q}(\zeta_8)$ in a similar way to Example \ref{8}. The fact that $E/\mathbb{Q}(\zeta_8)$ has odd analytic rank has the following consequence for $L$-functions. 
		
		Since all primes away from $2$ split in $\mathbb{Q}(\zeta_8)$ and $E$ has additive reduction at the prime above $2$ in $\mathbb{Q}(\zeta_8)$, the Euler product of the $L$-function of $E/\mathbb{Q}(\zeta_8)$ is formally a square, in the sense that each Euler factor appears an even number of times:
		\begin{equation}
			L(E/\mathbb{Q}(\zeta_8),s)=1\cdot\frac{1}{(1+2\cdot9^{-s}+9^{1-2s})^2}\frac{1}{(1+25^{-s})^2} \frac{1}{(1+10\cdot 49^{-s}+49^{1-2s})^2}\cdots.
		\end{equation}
		If we define $F(s)$ by the square root of this Euler product so that $F(s)^2=L(E/\mathbb{Q}(\zeta_8),s)$ for $\text{Re}(s)>\frac{3}{2}$, then $F(s)$ shares many properties that one would expect from an $L$-function. However, it does not have analytic continuation to $s=1$ since the order of vanishing of $L(E/\mathbb{Q}(\zeta_8),s)$ at $s=1$ is odd.
	\end{remark}

	\subsection{$y^2+y=x^3+x^2+x$ has infinitely many $\mathbb{Q}(\sqrt[3]{m})$-rational solutions for every $m$}
	
	\begin{example}[As considered in \cite{vladtwists}]\label{19}
		The elliptic curve 
		\begin{equation}
			E: y^2+y=x^3+x^2+x \hspace{10pt}\text{(19A3)}
		\end{equation}
		has rank $0$ over $\mathbb{Q}$. We claim that, assuming the parity conjecture, it has infinitely many $\mathbb{Q}(\sqrt[3]{m})$-rational solutions for every cube-free $m>1$. First, note that $\Delta_E=19$, and $E$ has split multiplicative reduction at~$19$. Hence, $w(E/K_{v})=-1$ at primes $v$ above $19$. So, to calculate the global root number, all we need to know is how many primes there are above $19$ in $\mathbb{Q}(\sqrt[3]{m})$. If $19\nmid m$, the Kummer-Dedekind theorem tells us that there is either one or three primes above $19$, corresponding to whether $X^3-m$ is irreducible or splits completely over $\mathbb{F}_{19}$. If $19\mid m$, it is totally ramified. The product of local root numbers at the infinite places is $+1$ since $\mathbb{Q}(\sqrt[3]{m})$ has one real embedding and one complex embedding. This tells us that 
		\begin{equation}
			w(E/\mathbb{Q}(\sqrt[3]{m}))=(-1)^{1 \text{ or } 3}=-1.
		\end{equation}
		So, assuming the parity conjecture, $E$ must have infinitely many $\mathbb{Q}(\sqrt[3]{m})$-rational points. 
	\end{example}
	
	We can numerically compute the points of infinite order on $E$ over $\mathbb{Q}(\sqrt[3]{m})$ for small $m$ (for $m=1,\dots,7$, $E$ has rank $1$ over $\mathbb{Q}(\sqrt[3]{m})$).
	
	\begin{center}
		\begin{tabular}{ |c|c| } 
			\hline
			$m$ &  Generator of $E(\mathbb{Q}(\sqrt[3]{m}))/E(\mathbb{Q}(\sqrt[3]{m}))_{\text{tors}}$ \\ 
			\hline 
			& \\
			$2$ & \hspace{15pt}$\left(2\sqrt[3]{2}^2 - \sqrt[3]{2}\quad,\quad -2\sqrt[3]{2}^2 + \sqrt[3]{2} + 5\right)$ \\ 
			& \\
			$3$ & \hspace{9pt}$\left(\frac{13\sqrt[3]{3}^2 + 6\sqrt[3]{3} - 3}{25}\quad,\quad \frac{-52\sqrt[3]{3}^2 - 24\sqrt[3]{3} - 213}{125}\right)$ \\ 
			& \\
			$4$ & $\left(\frac{-\sqrt[3]{4}^2 + 4\sqrt[3]{4}}{2}\quad,\quad\frac{\sqrt[3]{4}^2 - 4\sqrt[3]{4} + 10}{2}\right)$ \\ 
			& \\
			$5$ & \hspace{11pt}$\left(\frac{82\sqrt[3]{5}^2 - 144\sqrt[3]{5} - 160}{529}\quad,\quad \frac{-574\sqrt[3]{5}^2 +
				1008\sqrt[3]{5} - 6815}{12167}\right)$ \\ 
			& \\ 
			$6$ & \hspace{16pt}$\left(\frac{7340\sqrt[3]{6}^2 - 8885\sqrt[3]{6} - 9450}{90774}\quad,\quad
			\frac{748680\sqrt[3]{6}^2 - 906270\sqrt[3]{6} - 992477}{11165202}\right)$ \\ 
			& \\
			$7$ & \hspace{25pt}$\left(\frac{50\sqrt[3]{7}^2 - 40\sqrt[3]{7}}{63} \quad,\quad \frac{-150\sqrt[3]{7}^2 + 120\sqrt[3]{7} + 811}{189} \right)$ \\ 
			& \\
			\hline
		\end{tabular}
	\end{center}
	
	\vspace{10pt}
	
	Without the parity conjecture, proving the existence of these points for all $m$ seems completely out of reach. In \cite{timcubic}, it is shown that for an elliptic curve $E$ over a number field $K$, the rank of $E$ goes up in infinitely many extensions of $K$ obtained by adjoining a cube root of an element of $K$. However, these points account for very few $m$ in Example \ref{19}. 
	
	\begin{remark}
		We caution the reader that in the above example there are no parametric solutions of the form 
		\begin{equation}
			f:m\mapsto P_m\in E(\mathbb{Q}(\sqrt[3]{m})).
		\end{equation}
		Indeed, if $f$ were analytic, it would give an analytic map $\mathbb{P}^1\rightarrow  E(\mathbb{C})$ which contradicts the Riemann--Hurwitz formula. If $f$ were only assumed to be continuous, it would give a map $\mathbb{R}\rightarrow E(\mathbb{R})$. For $m\in \mathbb{Q}^{\times 3}$, this could only take values $(0,0)$, $(0,1)$ or $\mathcal{O}$, which would force $f$ to be constant.  
	\end{remark}
	
	\subsection{$E:y^2+y=x^3+x^2+x$ has rank at least $n$ over $\mathbb{Q}(\sqrt[3^n]{m})$}\label{firsttower}
	
	\begin{example}\label{19ext}
		Example \ref{19} tells us that the rank of $E:y^2+y=x^3+x^2+x$ (19A3) must grow over~$\mathbb{Q}(\sqrt[3]{m})$. Assuming the parity conjecture, it grows at every step of the tower $\big(\mathbb{Q}(\sqrt[3^n]{m})\big)_{n\geq 1}$. To see this, we claim that $19$ splits into an odd number of places in $\mathbb{Q}(\sqrt[3^n]{m})$. Indeed, $\mathbb{Q}_{19}(\zeta_{3^n},\sqrt[3^n]{m})$ is an odd degree extension of $\mathbb{Q}_{19}$ and so $e_{\Tilde{v}} f_{\Tilde{v}}$ is odd for every prime $\Tilde{v}$ above $19$ in $\mathbb{Q}(\zeta_{3^n},\sqrt[3^n]{m})$, where $e_{\Tilde{v}}$ is the ramification degree and $f_{\Tilde{v}}$ is the residue degree of $\Tilde{v}$ over $19$. Let $v$ be a prime above $19$ in $\mathbb{Q}(\sqrt[3^n]{m})$. Since $e_v f_v$ divides $ e_{\Tilde{v}} f_{\Tilde{v}}$, $e_v f_v$ is also odd. Then, since $[\mathbb{Q}(\sqrt[3^n]{m}):\mathbb{Q}]$ is odd and $[\mathbb{Q}(\sqrt[3^n]{m}):\mathbb{Q}]=\sum_{i=1}^{k} e_{v_i} f_{v_i}$, the sum over all primes above $19$ in $\mathbb{Q}(\sqrt[3^n]{m})$, this tells us that $k$ is odd and there are an odd number of primes above $19$. Hence 
		\begin{equation}
			w(E/\mathbb{Q}(\sqrt[3^n]{m}))=(-1)^{\text{odd number}}\cdot (-1)^{n+1}=(-1)^{n},
		\end{equation}
		since $\mathbb{Q}(\sqrt[3^n]{m})$ has an even number of infinite places when $n$ is odd and an odd number when $n$ is even. Since the sign of the global root number changes at each step, the parity conjecture implies that the rank of $E$ must grow at every step of the tower $\big(\mathbb{Q}(\sqrt[3^n]{m})\big)_{n\geq 0}$. In particular, the rank of $E$ over $\mathbb{Q}(\sqrt[3^n]{m})$ is at least $n$. 
		
	\end{example}
	
	In \cite{vladtwists}, Example \ref{19ext} is generalised to the setting of root numbers of elliptic curves defined over $\mathbb{Q}$ in the towers of extensions $\big(\mathbb{Q}(\sqrt[p^n]{m})\big)_{n\geq 0}$ and $\big(\mathbb{Q}(\zeta_{p^n},\sqrt[p^n]{m})\big)_{n\geq 0}$, where $p$ is an odd prime. Here we present the results in the setting of semistable elliptic curves.
	
	\begin{theorem}[\cite{vladtwists} Theorem 6]\label{tower}
		Let $E$ be a semistable elliptic curve over $\mathbb{Q}$. Let $p$ be an odd prime at which $E$ has good reduction. Let $m>1$ be an $p$-th power free integer\footnote{We call $m$ `$p$-th power free' if $x^p\nmid m$ for any $x$, i.e. it is the generalisation of square-free and cube-free.}. Then the global root number for $E$ over $K=\mathbb{Q}(\sqrt[p^n]{m})$ is
		\begin{equation}
			w(E/K)=w(E/\mathbb{Q})\cdot (-1)^{n(\frac{p-1}{2}+t)},
		\end{equation}
		where $t$ is the number of primes of multiplicative reduction of $E$ that do not divide $m$ and that are non-squares modulo $p$.
	\end{theorem}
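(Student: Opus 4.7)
The plan is to compute $w(E/K)/w(E/\mathbb{Q})$ place by place, using $w(E/K)=\prod_v w(E/K_v)$ and Theorem~\ref{rootnos}. Since $E$ has good reduction at $p$, every bad prime of $E$ is coprime to $p$ and of multiplicative reduction, so parts (i)--(iv) of Theorem~\ref{rootnos} give every local root number in closed form, provided we can identify the split/non-split type at each place of $K$.

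First I would dispatch the Archimedean contribution: $K=\mathbb{Q}(\sqrt[p^n]{m})$ has one real and $(p^n-1)/2$ complex places, each contributing $-1$, so a short case-check on $p\bmod 4$ shows that the ratio to the Archimedean part of $w(E/\mathbb{Q})$ equals $(-1)^{n(p-1)/2}$. Next I would handle the primes $\ell\mid m$: since $m$ is $p$-th power free, $v_\ell(m)\in\{1,\dots,p-1\}$ is coprime to $p$, which forces $\alpha=\sqrt[p^n]{m}$ to have $v_\ell$-denominator $p^n$. Hence $\ell$ is totally ramified with a unique place $v$ having $e_v=p^n$ and $f_v=1$, so $w(E/K_v)=w(E/\mathbb{Q}_\ell)$ and these primes contribute trivially to the ratio.

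The bulk of the work is the analysis at primes $\ell\neq p$ of multiplicative reduction with $\ell\nmid m$, where $\ell$ is unramified in $K$. The places $v\mid\ell$ correspond to the orbits of Frobenius on the set $A=\beta_0\mu_{p^n}\subset\bar{\mathbb{F}}_\ell$ of $p^n$-th roots of $m$, with orbit length equal to $f_v$. My key claim is that every orbit size has the same parity as $r_1:=\mathrm{ord}_{(\mathbb{Z}/p)^\times}(\ell)$. When $\ell\not\equiv 1\pmod p$, a choice of $\beta_0$ makes Frobenius act as multiplication by $\ell$ on $\mathbb{Z}/p^n$; the orbit sizes are then $r_k:=\mathrm{ord}_{(\mathbb{Z}/p^k)^\times}(\ell)$, which differ from $r_1$ only by powers of the odd prime $p$. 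When $\ell\equiv 1\pmod p$, a lifting-the-exponent argument forces every orbit to have $p$-power length, matching $r_1=1$.

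Given this parity lemma, the contribution of each $\ell\nmid m$ is a direct count. For $r_1$ odd every $f_v$ is odd, so the split/non-split type is preserved at each $v\mid\ell$ and the local ratio is trivial. For $r_1$ even every non-trivial orbit is of even length, and counting them (using that orbits of length $r_k$ occur with multiplicity $p^{k-1}(p-1)/r_k$) gives a ratio contribution of $(-1)^{n(p-1)/r_1}$ at both split and non-split primes. Writing $\ell\equiv g^a\pmod p$ for a fixed generator $g$, so that $(p-1)/r_1=\gcd(a,p-1)$, this exponent is odd exactly when $\ell$ is a non-square modulo $p$. The product over all bad $\ell\nmid m$ is thus $(-1)^{nt}$, and combining with the Archimedean factor yields the stated formula. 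The hard part will be the Frobenius-orbit parity lemma in the case $\ell\equiv 1\pmod p$, where the Frobenius action on $\mu_{p^n}$ is an affine map that may have no fixed points, and only the identity $v_p((\ell^k-1)/(\ell-1))=v_p(k)$ lets one uniformly pin down the orbit lengths.
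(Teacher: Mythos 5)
Your proof is correct, and it is a genuine proof of the theorem by a direct local computation; the paper itself does not give a proof but refers to \cite{vladtwists} Theorem 6, whose methods (judging from how the paper deploys them in Example~\ref{19zeta}) proceed via the Artin formalism and the decomposition of $\mathrm{Ind}_{G_K}^{G_\mathbb{Q}}\mathds{1}$ into irreducibles, using the twisted root number formula of Theorem~\ref{rootnostwists}. Your argument instead computes the local factor over every place of $K$ directly from Theorem~\ref{rootnos}, which is closer in spirit to the ad hoc computation the paper carries out in Example~\ref{19ext} but upgrades it to handle all residues of $\ell$ modulo $p$. The key new ingredient you isolate is the Frobenius orbit parity lemma: for $\ell\nmid pm$ the places $v\mid\ell$ correspond to Frobenius orbits on the $p^n$-th roots of $m$ in $\bar{\mathbb{F}}_\ell$, and every orbit length $f_v$ has the same parity as $r_1=\mathrm{ord}_{(\mathbb{Z}/p)^\times}(\ell)$. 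In the case $\ell\not\equiv 1\pmod p$ your conjugation to the multiplication-by-$\ell$ action and the formula $r_k=r_1p^{e_k}$ are correct; in the case $\ell\equiv 1\pmod p$ the reduction to the identity $v_p\bigl((\ell^k-1)/(\ell-1)\bigr)=v_p(k)$ does force all orbit lengths to be $p$-powers, matching $r_1=1$. Your orbit-multiplicity count then gives the ratio $(-1)^{n(p-1)/r_1}$ for each bad $\ell\nmid m$, and you correctly identify $(p-1)/r_1=\gcd(a,p-1)$ as odd precisely when $\ell$ is a non-square modulo $p$. One small point worth spelling out: for $r_1$ odd and $E$ of \emph{split} multiplicative reduction at $\ell$, the ``trivial local ratio'' claim requires observing that the number $N$ of places above $\ell$ is odd, which follows from $\sum_{v\mid\ell}f_v=p^n$ being odd and all $f_v$ being odd; this is consistent with, and is what makes, the uniform formula $(-1)^{n(p-1)/r_1}$ hold in both parity regimes. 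The Archimedean and $\ell\mid m$ computations are correct, and the remaining places (good reduction, and $\ell=p$) contribute trivially as you implicitly use.
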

	
	\begin{corollary}
		Let $E$ be a semistable elliptic curve over $\mathbb{Q}$ and let $p\equiv 3\pmod{4}$ be a prime at which $E$ has good reduction. Suppose that every prime of multiplicative reduction is a square modulo $p$. If the parity conjecture holds then the rank of $E$ over $\mathbb{Q}(\sqrt[p^n]{m})$ is at least $n$ for every $p$-th power free integer $m$.
	\end{corollary}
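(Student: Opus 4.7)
The plan is to apply Theorem \ref{tower} directly and then invoke the parity conjecture to force the rank to strictly increase at every step of the tower $\bigl(\mathbb{Q}(\sqrt[p^n]{m})\bigr)_{n\geq 0}$.

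First I would substitute the hypotheses into the formula of Theorem \ref{tower}. Since every prime of multiplicative reduction of $E$ is a square modulo $p$, the quantity $t$ in the theorem vanishes, so
\begin{equation}
w(E/\mathbb{Q}(\sqrt[p^n]{m})) = w(E/\mathbb{Q})\cdot(-1)^{n\cdot\frac{p-1}{2}}.
\end{equation}
The condition $p\equiv 3\pmod{4}$ makes $\frac{p-1}{2}$ odd, so this simplifies to $w(E/\mathbb{Q})\cdot(-1)^n$.

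Next I would compare consecutive levels. Setting $K_n = \mathbb{Q}(\sqrt[p^n]{m})$, we have $K_n\subseteq K_{n+1}$, so $\text{rk}(E/K_n)\leq \text{rk}(E/K_{n+1})$. By the previous calculation, $w(E/K_{n+1}) = -w(E/K_n)$, so assuming the parity conjecture, $\text{rk}(E/K_{n+1})$ and $\text{rk}(E/K_n)$ have opposite parities. Combined with the monotonicity inequality, this forces a strict inequality $\text{rk}(E/K_{n+1}) \geq \text{rk}(E/K_n)+1$.

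Finally, iterating from $K_0 = \mathbb{Q}$, where $\text{rk}(E/\mathbb{Q})\geq 0$, gives $\text{rk}(E/K_n)\geq n$, as claimed. There is no real obstacle here: the content is packaged into Theorem \ref{tower}, and the corollary is essentially a clean observation that the sign-flipping pattern combined with the tower structure forces rank growth at every level.
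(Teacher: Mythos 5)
Your proof is correct and takes essentially the same approach as the paper: substitute $t=0$ and $\frac{p-1}{2}$ odd into Theorem \ref{tower} to get $w(E/\mathbb{Q}(\sqrt[p^n]{m})) = w(E/\mathbb{Q})\cdot(-1)^n$, then combine the alternating root number with the monotonicity of rank along the tower to force strict growth at each step. The paper compresses all of this into one sentence; you have merely spelled out the standard parity-plus-monotonicity argument that it leaves implicit.
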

	
	\begin{proof}
		Since $w(E/\mathbb{Q}(\sqrt[p^n]{m}))=w(E/\mathbb{Q})\cdot (-1)^n$, assuming the parity conjecture the rank of $E$ increases at every step of the tower of number fields $\big(\mathbb{Q}(\sqrt[p^n]{m})\big)_{n\geq 0}$. 
	\end{proof}
	
	\begin{example}\label{11}
		Assuming the parity conjecture, the curve
		\begin{equation}
			E:y^2+y=x^3-x^2 \hspace{10pt} \text{(11A3)}
		\end{equation}
		has rank at least $n$ over $\mathbb{Q}(\sqrt[7^n]{m})$ for any positive $7$-th power free integer $m$. Indeed, we have $\Delta_E=-11$ and $E$ has split multiplicative reduction at $11$. Since $11$ is a square modulo $7$, the rank is at least $n$ over $\mathbb{Q}(\sqrt[7^n]{m})$. In particular, the parity conjecture implies that $E$ has rank at least $1$ over $\mathbb{Q}(\sqrt[7]{3})$. Magma fails to find the point of infinite order on $E/\mathbb{Q}(\sqrt[7]{3})$.
	\end{example}
	
	\subsection{$E:y^2+y=x^3+x^2+x$ has rank at least $3^n$ over $\mathbb{Q}(\zeta_{3^n},\sqrt[3^n]{m})$}\label{towersection}
	
	\begin{example}\label{19zeta}
		Let us study what happens to the rank of the curve $E:y^2+y=x^3+x^2+x$ (19A3) from Examples \ref{19} and \ref{19ext} over $K_n=\mathbb{Q}(\zeta_{3^n},\sqrt[3^n]{m})$. We claim that, assuming the parity conjecture for twists (see \S\ref{paritytwists}), $E$ has rank at least $3^n$ over $\mathbb{Q}(\zeta_{3^n},\sqrt[3^n]{m})$, for every cube-free integer $m>1$. Let $G=\text{Gal}(K_n/\mathbb{Q})$ and $V=E(K_n)\otimes_{\mathbb{Z}}\mathbb{C}$. Then, by Lemma \ref{frob}, for any subgroup $H\leq \text{Gal}(K_n/\mathbb{Q})$, we have 
		\begin{equation}
			\text{rk}(E/K_n^H)=\langle \text{Ind}_{H}^{G}\mathds{1},V\rangle_G. 
		\end{equation}
		We can use this, along with our knowledge from Example \ref{19ext} about what happens to $E$ over $\mathbb{Q}(\sqrt[3^n]{m})$, to say something about the rank of $E$ over $K_n$. First, note that 
		\begin{equation}
			\sigma_n=\text{Ind}_{\text{Gal}(K_n/\mathbb{Q}(\sqrt[3^n]{m}))}^{G} \mathds{1} = \mathds{1}\oplus\rho_1\cdots\oplus \rho_n,
		\end{equation}
		where $\rho_k$ is an irreducible representation of dimension $p^{k-1}(p-1)$, see \cite{vladtwists} \S 5.2 for more details. By Theorem \ref{artinformalism}, we have
		\begin{equation}
			w(E/\mathbb{Q}(\sqrt[3^n]{m}))=w(E/\mathbb{Q})\cdot w(E/\mathbb{Q},\rho_1)\cdots w(E/\mathbb{Q},\rho_n),
		\end{equation}
		whence, $w(E/\mathbb{Q},\rho_k)=-1$ as we saw in Example \ref{19ext} that $w(E/\mathbb{Q})=+1$ and $w(E/\mathbb{Q}(\sqrt[3^n]{m}))=(-1)^n$. Assuming the parity conjecture for twists, this tells us that $\rho_k$ appears in $V$ with odd multiplicity for all $k\in\{1,\dots, n\}$. Now, $\mathbb{C}[G]$ has $\rm{dim} \ \rho_k$ copies of $\rho_k$, whereby $\text{rk}(E/K_n)=\langle\mathbb{C}[G],V\rangle_G$ is at least $\sum_{k=1}^n \text{dim} \rho_k=3^n-1$. Note that $w(E/\mathbb{Q}(\zeta_3))=-1$ since $19$ splits in $\mathbb{Q}(\zeta_3)$, so the parity conjecture implies $\text{rk}(E/\mathbb{Q}(\zeta_3))\geq 1$ and, in particular,
		\begin{equation}
			\text{rk}(E/\mathbb{Q}(\zeta_3))=\langle\mathbb{C}[{\rm{Gal}}(\mathbb{Q}(\zeta_3)/\mathbb{Q})],V\rangle_G\geq 1.
		\end{equation}
		Since $\mathbb{C}[\rm{Gal}(\mathbb{Q}(\zeta_3)/\mathbb{Q})]$ appears in $\mathbb{C}[G]$ and is orthogonal to the $\rho_k$, along with our previous working this tells us that 
		\begin{equation}
			\text{rk}(E/K_n)=\langle\mathbb{C}[G],V\rangle_G\geq3^n. 
		\end{equation}
	\end{example}
	
	\begin{remark}
		By Lemma \ref{permutation}, the result in the above example could have been deduced from the parity conjecture for $E$ over intermediate fields rather than the parity conjecture for twists.
	\end{remark}
	
	In \cite{vladtwists} the methods demonstrated in the example above are generalised in order to prove the following result (which for simplicity we state just for semistable elliptic curves).
	
	\begin{theorem}[\cite{vladtwists} Corollary 13]\label{rankpower}
		Let $p\equiv 3\pmod{4}$ be a prime number and let $E$ be a semistable elliptic curve with good reduction at $p$ and such that every prime of multiplicative reduction is a square modulo $p$. If the parity conjecture holds then, for every $p$-th power free integer $m>1$, the rank of $E$ over $\mathbb{Q}(\zeta_{p^n},\sqrt[p^n]{m})$ is at least $p^n$. 
	\end{theorem}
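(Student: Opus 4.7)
The plan is to generalize the argument of Example \ref{19zeta} by systematically decomposing $V = E(K_n) \otimes_\mathbb{Z} \mathbb{C}$ as a module for $G = \textup{Gal}(K_n/\mathbb{Q})$, where $K_n = \mathbb{Q}(\zeta_{p^n}, \sqrt[p^n]{m})$. Set $F_k = \mathbb{Q}(\sqrt[p^k]{m})$ and $\sigma_k = \textup{Ind}_{\textup{Gal}(K_n/F_k)}^G \mathds{1}$. By \cite{vladtwists} \S5.2, this induced representation decomposes as $\sigma_k = \mathds{1} \oplus \rho_1 \oplus \cdots \oplus \rho_k$ with $\rho_j$ irreducible of dimension $p^{j-1}(p-1)$, so that $\sum_{j=1}^n \dim \rho_j = p^n - 1$. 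Since every irreducible of $G$ appears in $\mathbb{C}[G]$ with multiplicity equal to its dimension, Lemma \ref{frob} yields the key inequality
\begin{equation}
\textup{rk}(E/K_n) = \langle \mathbb{C}[G], V\rangle_G \geq \textup{rk}(E/\mathbb{Q}(\zeta_p)) + \sum_{k=1}^n \dim(\rho_k) \langle \rho_k, V\rangle_G,
\end{equation}
where the first term accounts for the $p-1$ one-dimensional characters of $\textup{Gal}(\mathbb{Q}(\zeta_p)/\mathbb{Q})$ (disjoint from the higher-dimensional $\rho_k$ for $k \geq 1$).

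The main step is to show $\langle \rho_k, V\rangle_G \geq 1$ for every $k \geq 1$. By Theorem \ref{artinformalism}(iii),
\begin{equation}
w(E/F_k) = w(E/\mathbb{Q}, \sigma_k) = \prod_{j=0}^k w(E/\mathbb{Q}, \rho_j).
\end{equation}
Theorem \ref{tower} gives $w(E/F_k) = w(E/\mathbb{Q}) \cdot (-1)^{k((p-1)/2 + t)}$; the hypothesis that every prime of multiplicative reduction is a square modulo $p$ forces $t = 0$, and $p \equiv 3 \pmod 4$ makes $(p-1)/2$ odd, so the telescoping ratio $w(E/F_k)/w(E/F_{k-1})$ equals $-1$. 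By induction $w(E/\mathbb{Q}, \rho_k) = -1$ for all $k \geq 1$. Since $\rho_k = \sigma_k - \sigma_{k-1}$ is a virtual permutation representation, Lemma \ref{permutation} reduces the parity conjecture for twists by $\rho_k$ to the ordinary parity conjecture over $F_k$ and $F_{k-1}$; hence $\langle \rho_k, V\rangle_G$ is odd, in particular $\geq 1$.

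It remains to establish $\textup{rk}(E/\mathbb{Q}(\zeta_p)) \geq 1$ by computing $w(E/\mathbb{Q}(\zeta_p)) = -1$. The field $\mathbb{Q}(\zeta_p)$ has $(p-1)/2$ complex places, contributing $(-1)^{(p-1)/2} = -1$ since $(p-1)/2$ is odd. At each prime $q$ of multiplicative reduction, the hypothesis forces the order $f$ of $q$ in $(\mathbb{Z}/p)^\times$ to divide $(p-1)/2$, so $q$ splits into $(p-1)/f$ primes in $\mathbb{Q}(\zeta_p)$, an even number; by Lemma \ref{2primes} the primes above $q$ share a common local root number, so their contribution is $(\pm 1)^{(p-1)/f} = +1$. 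All remaining primes (good reduction, including $p$) contribute $+1$ by Theorem \ref{rootnos}(ii). Hence $w(E/\mathbb{Q}(\zeta_p)) = -1$, and the parity conjecture gives $\textup{rk}(E/\mathbb{Q}(\zeta_p)) \geq 1$. Assembling everything, $\textup{rk}(E/K_n) \geq (p^n - 1) + 1 = p^n$.

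The principal technical input is the decomposition $\sigma_k = \mathds{1} \oplus \bigoplus_{j=1}^k \rho_j$ with $\dim \rho_j = p^{j-1}(p-1)$, which depends on the representation theory of the semi-direct product $G$ and is the content of \cite{vladtwists} \S5.2. Granted this, the remaining obstacle is the subtle bookkeeping: the $\rho_k$ alone give only $\sum_j \dim \rho_j = p^n - 1$, one short of the claimed bound, and it is the additional unit supplied by $\textup{rk}(E/\mathbb{Q}(\zeta_p)) \geq 1$ --- coming from one-dimensional characters of $G$ that are invisible to the induced representation $\sigma_n$ --- that closes the gap to $p^n$.
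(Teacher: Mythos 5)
Your proof is correct and follows precisely the strategy the paper sketches in Example~\ref{19zeta} (for $p=3$, $E=19\mathrm{A}3$), which the paper presents as the illustrative template for Theorem~\ref{rankpower} while delegating the general case to \cite{vladtwists}. The three ingredients you use --- the decomposition $\sigma_k = \mathds{1}\oplus\rho_1\oplus\cdots\oplus\rho_k$ from \cite{vladtwists} \S5.2, the telescoping of $w(E/F_k)/w(E/F_{k-1})$ via Theorem~\ref{tower} to get $w(E/\mathbb{Q},\rho_k)=-1$, and the extra unit supplied by $w(E/\mathbb{Q}(\zeta_p))=-1$ coming from one-dimensional characters orthogonal to the $\rho_k$ --- are exactly the ones the paper deploys, and your verification that $(p-1)/2$ is odd and that each bad prime splits into an even number of primes in $\mathbb{Q}(\zeta_p)$ (since its order mod $p$ divides $(p-1)/2$) correctly handles the general case.
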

	
	\begin{example}
		Assuming the parity conjecture, the curve $E:y^2+y=x^3-x^2$ (11A3) from Example~\ref{11} has rank at least $7^n$ over $\mathbb{Q}(\zeta_{7^n},\sqrt[7^n]{m})$, for any $7$-th power free integer $m>1$.
	\end{example}
	
	\begin{remark}
		For a possible approach to explaining these points of infinite order using Heegner points, see \cite{darmon}.
	\end{remark}
	
	\subsection{Every positive $d\in\mathbb{Q}^\times/\mathbb{Q}^{\times 2}$ can be represented by $x^3-91x-182$}
	Theorem \ref{5050quad} tells us that half of the quadratic twists of an elliptic curve over $\mathbb{Q}$ by $d$ have global root number $+1$ and half have global root number $-1$. This distribution can be such that the root number is determined only by the sign of $d$, and a description of elliptic curves $E/\mathbb{Q}$ for which this occurs can be found in \cite{desjardinstwists} Theorem 1.1. If we believe the parity conjecture, we can use this to deduce that every positive rational number $d$ can be written in the form $d=s^2(t^3-91t-182)$ for $s$ and $t$ in $\mathbb{Q}$. For example, letting $f(x)=x^3-91x-182$:
	\begin{equation}
		1 =\frac{1}{8^2}\cdot f(-3), \hspace{15pt} 2 =\frac{27^2}{16^2}\cdot  f\left(-\frac{19}{9}\right), \hspace{15pt} 3 =\frac{9^2}{64^2}\cdot f\left(-\frac{17}{3}\right), \hspace{15pt} 5= \frac{675^2}{1185848^2}\cdot f\left(\frac{11209}{45}\right). 
	\end{equation}
	It is not clear how one might prove this property of $f(x)$ without the parity conjecture. 
	
	\begin{example}[As in \cite{desjardinstwists} Example 4]\label{posquad}
		We claim that all positive quadratic twists of the curve
		\begin{equation}
			E:y^2 = x^3-91x+182 \hspace{10pt}\text{(8281H1)}
		\end{equation}
		have root number $+1$ and all negative quadratic twists have root number $-1$. The minimal discriminant of $E$ is $\Delta=7^2\cdot 13^2$ and $E$ has type $\text{II}$ reduction at both primes. By Theorem \ref{rootnos}, $w(E/\mathbb{Q}_7)=(-1)^{\lfloor\frac{2\cdot 7}{12} \rfloor}=-1$ and similarly $w(E/\mathbb{Q}_{13})=+1$. Hence $w(E/\mathbb{Q})=+1$. Let $d\in\mathbb{Q}^\times\setminus\mathbb{Q}^{\times 2}$, $K=\mathbb{Q}(\sqrt{d})$ and let $v$ denote a prime above $7$ in $K$. If $7$ is ramified in $K$ then 
		\begin{equation}
			w(E/K_v)=(-1)^{\lfloor \frac{4\cdot 7}{12}\rfloor}=+1. 
		\end{equation}
		By a similar calculation, if $7$ is inert in $K$ then $w(E/K_v)=+1$. If $7$ splits, by Lemma \ref{2primes} there are two distinct places with the same root number. In all cases $\prod_{v|7}w(E/K_v)=+1$. We can do the same calculation for $\Tilde{v}$ above $13$ to find $\prod_{\Tilde{v}|13}w(E/K_{\Tilde{v}})=+1$. Thus 
		\begin{equation}
			w(E/K)=\begin{cases}
				+1 &\text{ if } d>0; \\
				-1 &\text{ if } d<0, 
			\end{cases}
		\end{equation}
		and our claim follows from Lemma \ref{quadrootnos}. In particular, the parity conjecture implies that all negative quadratic twists of $E$ have infinitely many points. 
	\end{example}
	
	\begin{definition}
		We say that $d\in\mathbb{Q}^\times/\mathbb{Q}^{\times 2}$ can be represented by $f(x)\in\mathbb{Q}[x]$ if there exists an $s\in\mathbb{Q}^\times$ and $t\in \mathbb{Q}$ such that $d=s^2f(x)$. If $f(x)$ is a cubic and the quadratic twist of the elliptic curve $E:y^2=f(x)$ by $d\in\mathbb{Q}^\times/\mathbb{Q}^{\times 2}$ has positive rank, then $d$ is represented by $f(x)$. 
	\end{definition}

	\begin{example}\label{negquad}
		Let 
		\begin{equation}
			E:y^2 = x^3-91x-182. \hspace{10pt} \text{(132496E1)}
		\end{equation}
		This is the quadratic twist by $-1$ of the curve in Example \ref{posquad} so
		\begin{align}
			w(E_d/\mathbb{Q})=\begin{cases}
				-1 &\text{ for } d>0; \\
				+1 &\text{ for } d<0. 
			\end{cases}
		\end{align}
		The parity conjecture implies that every positive $d\in\mathbb{Q}^{\times}/\mathbb{Q}^{\times 2}$ is representable by $x^3-91x-182$. 
	\end{example}
	
	\begin{remark}
		Assuming Goldfeld's conjecture that there is a $50/50$ distribution of the rank of quadratic twists of an elliptic curve $E/\mathbb{Q}$ being $0$ and $1$, from Example \ref{negquad} we deduce\footnote{None of the quadratic twists of $E$ have torsion points as Galois has maximal image on $E[p]$ for all $p>2$ and $E(\mathbb{Q})[2]=0$.} that $0\%$ of negative integers (up to squares) can be represented by $x^3-91x-182$. This is a peculiar property for a cubic polynomial: its positive values hit all possible classes modulo squares whereas its negative values hit a very sparse set.
	\end{remark}
	
	\subsection{Every $d\in\mathbb{Q}^\times/\mathbb{Q}^{\times 2}$ can be represented by $4x^3 -32x - 35$\hspace{5pt} or \hspace{5pt}$9x^3+16x+16$}\label{represented}
	Example \ref{negquad} and Example \ref{posquad} show that, assuming the parity conjecture, for every $d\in\mathbb{Q}$ at least one of the equations
	\begin{equation}
		d\cdot y^2=x^3-91x+182 \hspace{30pt} \text{and} \hspace{30pt} d\cdot y^2=x^3-91x-182
	\end{equation}
	has a rational solution. In particular, every $d\in\mathbb{Q}^\times/\mathbb{Q}^{\times 2}$ can be represented by $x^3-91x+182$ or $x^3-91x-182$. It turns out that every cubic $f(x)$ has an auxiliary cubic $g(x)$ with this property: 
	
	\begin{theorem}
		Let $f(x)\in\mathbb{Q}[x]$ be a separable cubic polynomial. Assuming the parity conjecture, there exists a separable cubic polynomial $g(x)\in\mathbb{Q}[x]$ such that every $d\in\mathbb{Q}^\times/\mathbb{Q}^{\times 2}$ is represented by $f(x)$ or $g(x)$.
	\end{theorem}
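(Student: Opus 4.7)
Let $E\colon y^2 = f(x)$ be the elliptic curve defined by $f$, which is smooth since $f$ is separable. The strategy is to take $g$ to be the cubic defining a quadratic twist $E' = E_{d_0}$ of $E$, chosen so that
\begin{equation}
w(E_d/\mathbb{Q}) \cdot w(E'_d/\mathbb{Q}) = -1
\end{equation}
for every $d \in \mathbb{Q}^\times/\mathbb{Q}^{\times 2}$. Granted this, for each $d$ exactly one of $E_d, E'_d$ has root number $-1$, so the parity conjecture forces that curve to have odd (hence positive) rank, yielding a rational point $(t,y_0)$ with $y_0 \neq 0$ on $dy^2 = f(x)$ or $dy^2 = g(x)$, i.e.\ $d = (1/y_0)^2 f(t)$ or $d = (1/y_0)^2 g(t)$.

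\textbf{Choice of $d_0$.} Mirroring the proof of Theorem \ref{5050quad}$(ii)$, choose a negative square-free integer $d_0$ with $\gcd(d_0, 2N_E) = 1$ such that every prime of bad reduction of $E$ splits in $\mathbb{Q}(\sqrt{d_0})$; such $d_0$ is abundant by Chebotarev/Dirichlet. Then $\mathbb{Q}(\sqrt{d_0})$ is imaginary quadratic, so its unique infinite place contributes $-1$ while every finite place contributes $+1$ (bad primes by Lemma \ref{2primes}, good primes trivially). Hence $w(E/\mathbb{Q}(\sqrt{d_0})) = -1$, and Lemma \ref{quadrootnos} gives $w(E/\mathbb{Q}) \cdot w(E_{d_0}/\mathbb{Q}) = -1$. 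Define $E' := E_{d_0}$ and let $g(x) \in \mathbb{Q}[x]$ be the cubic obtained from a Weierstrass model of $E'$ (for instance $g(x) = d_0^3 f(x/d_0)$ when $f$ is monic); it is separable because $E'$ is smooth.

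\textbf{Verifying the identity.} For $d \in \mathbb{Q}^\times$ not in the classes of $1$ or $d_0$ modulo squares, set $F = \mathbb{Q}(\sqrt{d},\sqrt{d_0})$. Since $d_0<0$, this biquadratic extension is totally imaginary, so its two infinite places together contribute $+1$; every prime of bad reduction of $E$ already splits in $\mathbb{Q}(\sqrt{d_0})$ and therefore lies below an even number of primes in $F$, which by Lemma \ref{2primes} contributes $+1$ as well. Hence $w(E/F) = +1$. Applying Lemma \ref{quadrootnos} twice, first to $F/\mathbb{Q}(\sqrt{d})$ and then to $\mathbb{Q}(\sqrt{d})/\mathbb{Q}$, expands this as
\begin{equation}
w(E/\mathbb{Q}) \cdot w(E_d/\mathbb{Q}) \cdot w(E_{d_0}/\mathbb{Q}) \cdot w(E_{dd_0}/\mathbb{Q}) = +1,
\end{equation}
which combined with $w(E/\mathbb{Q}) \cdot w(E_{d_0}/\mathbb{Q}) = -1$ yields $w(E_d/\mathbb{Q}) \cdot w(E_{dd_0}/\mathbb{Q}) = -1$. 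The two edge cases $d \equiv 1$ and $d \equiv d_0 \pmod{\mathbb{Q}^{\times 2}}$ both reduce directly to the identity $w(E/\mathbb{Q}) \cdot w(E_{d_0}/\mathbb{Q}) = -1$. Since $E'_d \cong E_{dd_0}$, this is exactly the required identity, and the theorem follows. The argument is essentially a reorganisation of the proof of Theorem \ref{5050quad}$(ii)$; the main thing to verify carefully is that $F$ is totally imaginary and that the splitting of bad primes in $\mathbb{Q}(\sqrt{d_0})$ forces an even number of primes above them in $F$.
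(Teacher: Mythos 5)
Your proof is correct and takes essentially the same approach as the paper: choose a negative $d_0$ coprime to $2N_E$ in which all bad primes of $E\colon y^2=f(x)$ split, take $g$ to be a model of the quadratic twist $E_{d_0}$, and deduce $w(E_{dd_0}/\mathbb{Q})=-w(E_d/\mathbb{Q})$ by the argument of Theorem~\ref{5050quad}(ii). The paper cites that proof directly and takes the simpler choice $g(x)=d_0 f(x)$ (which works without assuming $f$ monic); you have merely expanded the citation into the explicit place-counting in $\mathbb{Q}(\sqrt{d},\sqrt{d_0})$ and the double application of Lemma~\ref{quadrootnos}.
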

	
	\begin{proof}
		Take $g(x)=d_0f(x)$, where $d_0<0$ is such that all primes of bad reduction of $E:y^2=f(x)$ split in $\mathbb{Q}(\sqrt{d_0})$.  Then $E_{d_0}:y^2=g(x)=d_0f(x)$ is the quadratic twist of $E$ by $d_0$. By the proof of Theorem \ref{5050quad}(ii), for any $d\in\mathbb{Q}^\times/\mathbb{Q}^{\times 2}$ the root number of the quadratic twists of $E_{d_0}$ and $E$ by $d$ satisfy $w(E_{dd_0}/\mathbb{Q})=-w(E_{d}/\mathbb{Q})$. The parity conjecture implies that at least one of $E_d$ and $E_{dd_0}$ has a point of infinite order, and thus every $d\in\mathbb{Q}^\times/\mathbb{Q}^{\times 2}$ is represented by $f(x)$ or $g(x)$. 
	\end{proof}
	
	We can also find examples of such curves that are not quadratic twists of each other.
	
	\begin{example}
		Let
		\begin{equation}
			E:y^2 = x^3 + 16x^2 - 3072x - 68608.
		\end{equation}
		By similar calculations to those in Example \ref{posquad}, all the positive quadratic twists of the curve have root number $+1$ over $\mathbb{Q}$, and all the negative quadratic twists have root number $-1$. Thus, every $d\in\mathbb{Q}^\times/\mathbb{Q}^{\times 2}$ can be represented by $x^3-91x-182$ or by $x^3 + 16x^2 - 3072x - 68608$. 
	\end{example}
	
	Theorem \ref{5050quad} tells us that the global root number in a quadratic twist family is periodic; we can use this to come up with curves that are not quadratic twists of each other, for which one of their quadratic twists always has root number $-1$ and where the root number does not only depend on the sign of $d$.
	
	\begin{example}\label{largehight}
		Let $E$ and $E'$ be given by 
		\begin{equation}
			E:y^2 = 4x^3 -32x - 35 \quad \text{and}\quad E':y^2 = 9x^3+16x+16. 
		\end{equation}
		We claim that for every $d\in \mathbb{Q}^\times/\mathbb{Q}^{\times 2}$, either $w(E_d/\mathbb{Q})=-1$ or $w(E'_{d}/\mathbb{Q})=-1$.
		
		Both $E$ and the quadratic twist $E_{-3}'$ have minimal discriminant $-307$ and split multiplicative reduction at $307$. Thus $w(E/\mathbb{Q})=w(E_{-3}'/\mathbb{Q})=+1$. Since $307\equiv 1\pmod{3}$, $\mathbb{Q}_{307}$ contains $\sqrt{-3}$ and so $307$ has the same splitting behaviour in $\mathbb{Q}(\sqrt{d})$ as it does in $\mathbb{Q}(\sqrt{-3d})$. Thus $w(E/\mathbb{Q}(\sqrt{d}))=-w(E_{-3}'/\mathbb{Q}(\sqrt{-3d}))$ by Corollary \ref{semistable} and, by Lemma \ref{quadrootnos}, $w(E_d/\mathbb{Q})=-w(E'_{d}/\mathbb{Q})$. This proves the claim. The parity conjecture implies that every $d\in\mathbb{Q}^\times/\mathbb{Q}^{\times 2}$ can be represented as $f(x)=4x^3 -32x - 35$ or $g(x)=9x^3+16x+16$. 
	\end{example}
	
	\begin{remark}
		This representation of $d\in\mathbb{Q}^\times/\mathbb{Q}^{\times 2}$ as one of two cubics can be given by values of $x$ with very large height. In Example \ref{largehight}, the `smallest' (in terms of the height of $x$) representation of $-17$ is 
		\begin{equation}
			-17  = \frac{223280590408502625944275649356615800897^2}{6034892905593758120664851652128224164489^2}\cdot g\left(-\frac{330029825965445476649347569}{29773633899834719965546857}\right). 
		\end{equation}
	\end{remark}
	
	\subsection{All quadratic twists of $y^2=x^3+\frac{5}{4}x^2-2x-7$ over $\mathbb{Q}(\zeta_3,\sqrt[3]{11})$ have infinitely many points}\label{nogoldfeld5}
	As mentioned at the end of \S\ref{rootnosquadtwists}, the $50/50$ distribution of root numbers of quadratic twists of elliptic curves over $\mathbb{Q}$ can fail quite dramatically over number fields; there are elliptic curves over number fields for which all quadratic twists have the same root number.
	
	\begin{example}[As considered in \cite{posquadtwists}]\label{nogoldfeld3}
		Let $K=\mathbb{Q}(\zeta_3,\sqrt[3]{11})$. We claim that all quadratic twists of the curve 
		\begin{equation}
			E:y^2=x^3+\frac{5}{4}x^2-2x-7
		\end{equation}
		over $K$ have root number $-1$ and so should have positive rank. Indeed, the minimal discriminant of $E/\mathbb{Q}$ is $11^4$, and $E$ acquires everywhere good reduction over $K$. Since $K$ has three complex places and $K(\sqrt{d})$ has six, for $d\in K^{\times}\setminus K^{\times 2}$, we have $w(E/K)=-1$ and $w(E/K(\sqrt{d}))=+1$. By Lemma \ref{quadrootnos}
		\begin{equation}
			w(E_d/K)=w(E/K(\sqrt{d}))w(E/K)=-1.
		\end{equation}
		So, assuming the parity conjecture, $E_d$ has a point of infinite order for every $d\in K^\times$. See Theorem \ref{fakecm} below for a more general result concerning this behaviour.
	\end{example}
	
	\begin{remark}
		Example \ref{nogoldfeld3} shows that, assuming the parity conjecture, the polynomial $x^3+5/4x^2-2x-7$ takes every value in $K^\times/K^{\times2}$. 
	\end{remark}

	\subsection{$E:y^2=x^3+x^2-12x-\frac{67}{4}$ has even rank over every field extension of $\mathbb{Q}(\sqrt[4]{-37})$}
	
	Consider the elliptic curve
	\begin{equation}
		E:y^2=x^3+x.
	\end{equation}
	It has complex multiplication defined over $K=\mathbb{Q}(i)$ with $\text{End}_K(E)\cong \mathbb{Z}[i]$, where $i$ acts by $[i]\cdot(x,y)=(-x,iy)$. The rank of $E$ over $K$ is even, and similarly $\text{rk}(E/F)$ is even for any extension $F$ of $K$, since $E(F)\otimes_{\mathbb{Z}}\mathbb{Q}$ is naturally a $\mathbb{Q}(i)$-vector space, and so has even dimension over $\mathbb{Q}$. Moreover, using the fact that $\text{rk}(E_d/K)=\text{rk}(E/K(\sqrt{d}))-\text{rk}(E/K)$, we see that $\text{rk}(E_d/K)$ is even for any $d\in K^\times$. The parity conjecture forces some non-CM elliptic curves to exhibit the same behaviour. 
	
	\begin{example}[As considered in \cite{posquadtwists}]\label{nogoldfeld1}
		Let us take the curve 
		\begin{equation}
			E:y^2=x^3+x^2-12x-\frac{67}{4}. \hspace{10pt}\text{(1369E1)}
		\end{equation}
		Then $\Delta_E=37^3$ and $j(E)=2^{12}$, so $E/\mathbb{Q}$ has potentially good reduction at $37$ and it has everywhere good reduction over $K=\mathbb{Q}(\sqrt[4]{-37})$ and over every extension $F$ of $K$. Since $F$ has an even number of infinite places, $w(E/F)=+1$. In particular, assuming the parity conjecture, $E$ has even rank over every extension of $K$. 
	\end{example} 
	
	\begin{remark}
		Such a field $K$ exists for every elliptic curve $E$ with integral $j$-invariant as this means that $E$ has potentially good reduction everywhere, so we can find an extension of the base field over which $E$ acquires everywhere good reduction.
	\end{remark}
	
	As in \S\ref{parityex} and \S\ref{c2c2}, one would hope that there is some extra structure analogous to complex multiplication that would explain why $E$ always has even rank over extensions of $K$ in Example \ref{nogoldfeld1}. However, it is not clear how to prove this behaviour of ranks for any non-CM curve without assuming the parity conjecture. Elliptic curves that display this phenomenon, and that described in \S\ref{nogoldfeld5}, are classified by the following theorem. 
	
	\begin{theorem}[\cite{posquadtwists} Theorem 1]\label{fakecm}
		For an elliptic curve $E$ over a number field $K$, the following are equivalent
		\begin{enumerate}[(i)]
			\item All quadratic twists of $E/K$ have the same root number;
			\item $w(E/F)=w(E/K)^{[F:K]}$ for every finite extension $F$ of $K$;
			\item $K$ has no real places, and $E$ aquires everywhere good reduction over an abelian extension of $K$. 
		\end{enumerate}
	\end{theorem}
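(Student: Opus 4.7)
The plan is to establish the cycle of implications $(ii) \Rightarrow (i) \Rightarrow (iii) \Rightarrow (ii)$. The implication $(ii) \Rightarrow (i)$ is immediate: applying $(ii)$ with $F = K(\sqrt{d})$ gives $w(E/K(\sqrt{d})) = w(E/K)^2 = +1$, whence Lemma \ref{quadrootnos} forces $w(E_d/K) = w(E/K)$ for every $d$.

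For $(iii) \Rightarrow (ii)$, I would factor the identity $w(E/F) = w(E/K)^{[F:K]}$ into its local analogues $w(E/F_w) = w(E/K_v)^{[F_w : K_v]}$ at each $w \mid v$. Since $K$ has no real places, every archimedean completion of $F$ or $K$ is $\mathbb{C}$, handling the archimedean case. At finite places of good reduction both sides are $+1$. At a finite bad place $v$, the assumption that $E/K_v$ becomes good over an abelian extension $L_v/K_v$ implies that the associated Weil--Deligne representation has vanishing monodromy and abelian inertia image, so it is either a direct sum of two characters of the Weil group or an induction of a single character from an index-two subgroup; the local identity then follows from the inductivity in degree zero of local $\epsilon$-factors together with their standard behaviour on characters under base change.

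The main direction is $(i) \Rightarrow (iii)$. By Lemma \ref{quadrootnos}, $(i)$ is equivalent to $w(E/K(\sqrt{d})) = +1$ for every $d \in K^\times / K^{\times 2}$. To rule out a real place $v_0$ of $K$, I would use weak approximation to pick $d \in K^\times$ which is a local square at every finite bad prime of $E$ (including those above $2$), negative at $v_0$, and positive at every other real place; the finite local contributions to $w(E/K(\sqrt{d}))$ are then all $+1$ (good places contribute $+1$ automatically, bad places see $K_v(\sqrt{d}) = K_v$ and contribute $w(E/K_v)^2 = +1$), while the archimedean contribution is $-1$ from the single complexified place, contradicting $w(E/K(\sqrt{d})) = +1$. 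To rule out potentially multiplicative reduction at a finite place $v_0$, I would use that some local quadratic character shifts the local root number at $v_0$ (for example, twisting multiplicative reduction by a unit non-square swaps split and non-split, toggling the root number), then lift the offending local $d_0$ to a global $d$ that is a local square at every other finite bad prime of $E$, producing a global twist $E_d$ with $w(E_d/K) \ne w(E/K)$.

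Finally I would have to show that the local extensions $L_v/K_v$ can be taken abelian and glued into a single global abelian extension $L/K$. At residue characteristic at least $5$ the inertia image on the Tate module is cyclic of order dividing $12$, hence automatically abelian; at residue characteristics $2$ and $3$ the same twist-and-lift strategy, combined with Rohrlich's classification of local root numbers of Artin twists in these residue characteristics, should force abelianness of the inertia image --- a non-abelian image would exhibit a quadratic character whose twist shifts the local root number, contradicting $(i)$ after globalising. Once the $L_v$ are locally abelian, Grunwald--Wang produces a single global abelian $L/K$ realising them simultaneously. The hardest part will be the approximation steps: one must ensure that the new bad places of $E_d$, away from the support of the prescribed local data, contribute trivially to the global root number, and the residue-$2$-and-$3$ case analysis relies on the explicit local root number tables referenced after Theorem \ref{rootnos}.
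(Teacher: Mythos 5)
The overall structure of your argument matches the natural proof: you establish the cycle $(ii)\Rightarrow(i)\Rightarrow(iii)\Rightarrow(ii)$, with $(ii)\Rightarrow(i)$ immediate from Lemma \ref{quadrootnos}, and the other two directions by local analysis plus approximation. This is the same strategy as the source paper, and the real-place elimination via weak approximation is exactly right (including the observation that the finite bad places can be made to split so they contribute $(+1)$, and the key fact that the archimedean contribution is $(-1)^{\#\{\text{real places where }d<0\}}$ while good primes that become ramified in $K(\sqrt d)$ contribute trivially to $w(E/K(\sqrt d))$ because good reduction is stable under base change).

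However, there are two places where you would need substantially more than what you sketch. First, for $(iii)\Rightarrow(ii)$, the step ``the local identity then follows from the inductivity in degree zero of local $\epsilon$-factors together with their standard behaviour on characters under base change'' understates the work involved. Even in the case where $\sigma_v$ is a sum of two characters, the claim $w(E/F_w)=w(E/K_v)^{[F_w:K_v]}$ is not a formal consequence of inductivity: the $\epsilon$-factor of a ramified character under restriction to $F_w$ picks up conductor and discriminant terms, and one must use the symplectic self-duality of $\sigma_v$ (i.e. $\det\sigma_v=\omega$, the cyclotomic character) to see the phases cancel to $\pm1$ and the resulting sign obeys the required power law. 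In the induced case $\sigma_v=\mathrm{Ind}_H^{W_{K_v}}\chi$ one additionally has to control the Langlands constant $\lambda(F_w\cdot L/F_w)$, and the behaviour genuinely depends on whether $F_w$ contains the index-two field or not. One can sanity-check that the abelian hypothesis is doing real work: for Kodaira type IV over $\mathbb{Q}_5$ (where $3\nmid p-1$, so good reduction is only achieved over a non-Galois cubic), a direct computation from Theorem \ref{rootnos}(vii) gives $w(E/\mathbb{Q}_5)=-1$ but $w(E/\mathbb{Q}_5(\sqrt{d_0}))=-1$ for a ramified $d_0$, violating the power law --- and indeed $(iii)$ fails here. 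This kind of computation, rather than a purely formal argument, is what is actually needed.

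Second, for $(i)\Rightarrow(iii)$ at a finite offending place $v_0$, your mechanism needs sharpening. For \emph{additive} potentially multiplicative reduction the correct toggling twist is the local $d_0$ that turns the reduction into \emph{split} multiplicative (giving local root number $-1$, different from the $+1$ of the non-split and additive classes), not a swap of an already-multiplicative curve; your phrasing covers only the easy multiplicative subcase. For potentially good reduction with non-abelian inertia image (which occurs only at residue characteristics $2$ and $3$, e.g.\ inertia image $Q_8$ or $\mathrm{SL}_2(\mathbb{F}_3)$), the assertion that ``a non-abelian image would exhibit a quadratic character whose twist shifts the local root number'' is the genuinely hard content of the theorem, and it is not established by invoking Rohrlich's tables in passing; this is where the paper's proof does its real work, and it would need to be carried out case by case. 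Finally, your Grunwald--Wang gluing step is fine in principle, but remember that what you need is a global abelian $L/K$ with $L_w$ \emph{containing} the prescribed local abelian extensions, and you should also note the special case at $2$ in Grunwald--Wang, which can be sidestepped by enlarging the local targets. In summary: the skeleton is correct and matches the intended proof, but the local $\epsilon$-factor bookkeeping in $(iii)\Rightarrow(ii)$ and the non-abelian residue-$2$-and-$3$ analysis in $(i)\Rightarrow(iii)$ are not reducible to the one-liners you give.
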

	
	Theorem \ref{fakecm} tells us that if $E/K$ satisfies $(iii)$ and $w(E/K)=+1$ then, assuming the parity conjecture, $E$ will have even rank over every extension of $K$. 
	
	\begin{remark}
		It turns out that the criteria in Theorem \ref{fakecm} are equivalent to $K$ having no real places, and for all primes $\ell$ and all places $v\nmid \ell$ of $K$, the action of the absolute Galois group of $K_v$ on the Tate module $T_\ell(E)$ being abelian (see \cite{posquadtwists}). This is another way in which $E$ resembles a CM curve, since (in view of the Tate conjecture) an elliptic curve has CM if and only if the action of the global absolute Galois group on $\ell$-adic Tate module is abelian.
	\end{remark}
	
	\subsection{The rank of $E:y^2+xy=x^3-x^2-2x-1$ grows in extensions of $\mathbb{Q}(\sqrt{-1})$ of even degree} 
	
	Theorem \ref{fakecm} tells us that there exist elliptic curves over number fields for which all quadratic twists have the same root number. We can also use the theorem to come up with examples of elliptic curves $E/K$ whose rank grows in every even degree extension of $K$.
	
	\begin{example}[As considered in \cite{posquadtwists}]\label{nogoldfeld2}
		Let 
		\begin{equation}
			E:y^2+xy=x^3-x^2-2x-1. \hspace{10pt} \text{(49A1)}
		\end{equation}
		We have $\Delta_E=-7^3$, $j(E)=-3^3\cdot 5^3$ and $E$ has additive, potentially good, type III reduction at $7$. We claim that the rank of $E$ must grow in every extension of $K=\mathbb{Q}(\sqrt{-1})$ of even degree. Indeed, $7$ is inert in $\mathbb{Q}(\sqrt{-1})$ and by Theorem \ref{rootnos}, $w(E/K_v)=+1$, where $v$ is the unique prime above $7$ in $K$, and so $w(E/K)=-1$. Note that $K$ has no real places and $E$ acquires everywhere good reduction over $\mathbb{Q}(\sqrt{-1},\sqrt[4]{7})$, which is an abelian extension of $K$. By Theorem \ref{fakecm}, for any even degree extension $F$ of $K$, 
		\begin{equation}
			w(E/F)=w(E/K)^{[F:K]}=(-1)^{\text{even number}}=+1\neq w(E/K),
		\end{equation}
		and the parity conjecture predicts that $\text{rk}(E/F)>\text{rk}(E/K)$.
	\end{example}
	
	\subsection{Every positive integer $n\equiv 5, 6, \text{ or } 7  \pmod{8}$ is a congruent number}\label{congnumberprob}
	
	The classical \textit{congruent number problem} asks: for a natural number $n$, can it be realised as the area of a right-angled triangle with rational sides? We call such $n$ \textit{congruent numbers}. The congruent number problem dates back to Arab manuscripts from the 10th century and to this day it is not known in full generality, although it has been extensively studied. Heegner \cite{heegner} proved that if $p$ is a prime congruent to $5$ modulo $8$ then it is a congruent number, and other cases have been proved using the theory of Heegner points on modular curves, see \cite{tian}. Smith \cite{smith1} has announced a proof that at least $55.9\%$ of positive square-free integers congruent to $5$, $6$, or $7$ modulo $8$ are congruent numbers. It has been long expected that every positive integer congruent to $5, 6, \text{ or } 7  \pmod{8}$ is a congruent number, and we will explain how this can be deduced from the parity conjecture.
	
	\begin{example}
		There is a right-angled triangle with sides of length $3$, $4$, $5$ which has area $6$, so that $6$ is a congruent number. Similarly, the right-angled triangle with sides of length $\frac{35}{12}$, $\frac{24}{5}$ and $\frac{337}{60}$ has area $7$, whence $7$ is a congruent number.
	\end{example}

	\begin{definition}
		The elliptic curve $E:y^2=x^3-x$ is called the \textit{congruent number curve} because the quadratic twist of the curve by $n\in\mathbb{N}$, $E_n:y^2=x(x-n)(x+n)$, has a point of infinite order over $\mathbb{Q}$ if and only if $n$ is a congruent number (see e.g. \cite{tunnell} for details on why this is the case). For these elliptic curves, a rational point $(x,y)$ has infinite order if and only if $y\neq 0$. 
	\end{definition}
	
	\begin{example}
		We already know that $7$ is a congruent number. Indeed, the elliptic curve
		\begin{equation}
			E_7:y^2=x(x-7)(x+7) 
		\end{equation}
		has a rational point of infinite order, namely $(25,120)$. 
	\end{example}
	
	\begin{example}
		There is a point on the curve
		\begin{equation}
			E_{166}:y^2= x(x-166)(x+166)
		\end{equation}
		which is given by
		\begin{align}
			x= -\frac{7969693283}{98823481} \text{ and } y=-\frac{1280060076599271}{982404224621}.
		\end{align}
		This tells us that $166$ is a congruent number. The height of the generator $E_{166}$ is already very large. What if we wanted to know whether $800\hspace{3pt}006$ was a congruent number? Magma \cite{magma} cannot find a point of infinite order on $E_{800006}$ and cannot tell us whether $800\hspace{3pt}006$ is not a congruent number. Assuming the parity conjecture, we can easily show that it is.
	\end{example}
	
	\begin{theorem}\label{congnum}
		Let $n$ be a positive, square-free integer. Then 
		\begin{equation}
			w(E_n/\mathbb{Q})=
			\begin{cases}
				+1 & \text{ if } n\equiv 1,\text{ } 2 \text{ or } 3 \pmod{8}, \\
				-1 & \text{ if } n\equiv 5,\text{ } 6 \text{ or } 7 \pmod{8}. 
			\end{cases}
		\end{equation}
		
	\end{theorem}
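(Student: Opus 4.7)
My plan is to compute $w(E_n/\mathbb{Q})$ directly as a product of local root numbers, using the globally minimal Weierstrass model $y^2 = x(x-n)(x+n) = x^3 - n^2 x$, whose invariants are $\Delta = 64 n^6$, $c_4 = 48 n^2$, $c_6 = 0$, $j = 1728$. Since $n$ is square-free the bad primes are exactly $2$ and the odd primes dividing $n$, at all of which $E_n$ has potentially good reduction (as $j$ is integral). Theorem~\ref{rootnos}(i) gives $w(E_n/\mathbb{R}) = -1$, and good reduction contributes $+1$ at every other finite place.

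At an odd prime $p \mid n$ I apply Tate's algorithm to this model. Since $v_p(c_4) = 2$ and $v_p(\Delta) = 6$, the algorithm reaches the step where the cubic
\begin{equation}
    P(T) = T^3 + (a_4/p^2)\, T = T\bigl(T - n/p\bigr)\bigl(T + n/p\bigr)
\end{equation}
determines the type; because $v_p(n) = 1$ and $p$ is odd, $P(T)$ has three distinct roots in $\mathbb{F}_p$, so the Kodaira type is $\mathrm{I}_0^*$. Theorem~\ref{rootnos}(viii) then yields $w(E_n/\mathbb{Q}_p) = \bigl(\tfrac{-1}{p}\bigr) = (-1)^{(p-1)/2}$, and multiplicativity of the Jacobi symbol gives
\begin{equation}
    \prod_{p \mid n,\, p \text{ odd}} w(E_n/\mathbb{Q}_p) = \bigl(\tfrac{-1}{m}\bigr) = (-1)^{(m-1)/2},
\end{equation}
where $m$ denotes the odd part of $n$.

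The principal obstacle is the prime $2$, where $E_n$ has potentially good reduction with $c_6 = 0$, a degenerate case not addressed directly by Theorem~\ref{rootnos}; here one must consult the Halberstadt--Rizzo tables in Appendix~\ref{rootnosat2table}. I proceed by cases on $n$ modulo $16$. For $n \equiv 1 \pmod 8$, the element $n$ is a square in $\mathbb{Q}_2^\times$, so $E_n \cong E_1$ over $\mathbb{Q}_2$ and $w(E_n/\mathbb{Q}_2) = w(E_1/\mathbb{Q}_2)$, which is computed once from the table. For each of the three remaining odd residue classes modulo $8$ and the two even classes $n \equiv 2,6 \pmod 8$ allowed by squarefreeness, I produce a minimal model at $2$ by an appropriate integral substitution and read off the local root number from the appendix.

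Assembling the three contributions and checking the six square-free residue classes modulo $8$ then verifies the claimed dichotomy. The structural content is that although $w(E_n/\mathbb{Q}_2)$ and $(-1)^{(m-1)/2}$ individually depend on $n$ modulo higher powers of $2$, their product with $w(E_n/\mathbb{R}) = -1$ depends only on $n \pmod 8$, and the sign flips precisely when one moves from $\{1,2,3\}$ to $\{5,6,7\}$ modulo $8$.
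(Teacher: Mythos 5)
Your proof is correct, but it takes a genuinely different route from the paper's. The paper first invokes Theorem~\ref{5050quad}(i) to deduce that $w(E_n/\mathbb{Q})$ depends only on $n\pmod{16}$ (since $N_{E_1}=32$, the period divides $4\cdot 2^2=16$), and then simply computes the global root number for one representative of each of the twelve square-free residue classes modulo~$16$. You instead dispense with the periodicity lemma and compute each local factor directly for arbitrary square-free $n$: the Archimedean factor is $-1$; at each odd $p\mid n$, Tate's algorithm for $y^2=x^3-n^2x$ gives Kodaira type $\mathrm{I}_0^*$, so Theorem~\ref{rootnos}(viii) gives $w(E_n/\mathbb{Q}_p)=\bigl(\tfrac{-1}{p}\bigr)$ and the product over $p\mid n$ odd collapses to the Jacobi symbol $\bigl(\tfrac{-1}{m}\bigr)=(-1)^{(m-1)/2}$ for $m$ the odd part of $n$; and at $p=2$ you consult Appendix~\ref{rootnosat2table}. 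This makes the arithmetic more transparent (the answer is literally $(-1)\cdot(-1)^{(m-1)/2}\cdot w(E_n/\mathbb{Q}_2)$) and is self-contained, at the cost of having to run the Tate and $2$-adic analyses yourself. Two small points of precision. First, for even $n$ the $\mathbb{Q}_2$-square class of $n$ is determined by $n\pmod{16}$, not $\pmod 8$, so \emph{a priori} you must check four even classes; however, in all four cases $(C_\Delta,C_6,C_4)=(0,\infty,2)$ with $c_4'=3m^2\equiv 3\pmod 4$ and $C_6\neq 4$, so the table gives $w(E_n/\mathbb{Q}_2)=-1$ uniformly — this should be noted explicitly. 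Second, you need not ``produce a minimal model at $2$'': Rizzo's table, as reproduced in Appendix~\ref{rootnosat2table} via the $(C_\Delta,C_6,C_4)$ normalisation, applies directly to the (possibly non-minimal) model $y^2=x^3-n^2x$.
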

	
	\begin{proof}
		By Theorem \ref{5050quad}$(i)$, $w(E_n/\mathbb{Q})$ depends only on $n\pmod{16}$. So all one needs to do is calculate the root numbers for one square-free $n$ in each of the congruence classes modulo $16$. For $E_1:y^2=x^3-x$, we have $c_4=2^4\cdot 3$, $c_6=0$ and $\Delta_{E_1}=2^6$. In the terminology of Notation \ref{cprime}, we have $c_4'=3$ and $c_4'-4c_{6,7}=3$ so that, by the table in Appendix \ref{rootnosat2table}, $w(E_1/\mathbb{Q}_2)=-1$ and hence $w(E_1/\mathbb{Q})=+1$. For $E_5:y^2=x^3-25x$, we have $c_4=2^4\cdot 3\cdot 5^2$, $c_6=0$, $\Delta_{E_5}=2^6\cdot 5^6$, $c_4'=75\equiv 3\pmod{4}$ and $c_4'-4c_{6,7}\equiv 11\pmod{16}$. Again by the table, $w(E_5/\mathbb{Q}_2)=+1$. By Theorem \ref{rootnos}, $w(E_5/\mathbb{Q}_5)=(-1)^{\lfloor\frac{6\cdot 5}{12}\rfloor}=+1$, so $w(E_5/\mathbb{Q})=-1$. Similarly, we have 
		\begin{equation}
			w(E_{41}/\mathbb{Q})= w(E_2/\mathbb{Q})=w(E_{10}/\mathbb{Q})= w(E_3/\mathbb{Q})=w(E_{11}/\mathbb{Q})=+1;
		\end{equation}
		\begin{equation}
			w(E_{13}/\mathbb{Q})= w(E_6/\mathbb{Q})=w(E_{14}/\mathbb{Q})= w(E_7/\mathbb{Q})=w(E_{15}/\mathbb{Q})=-1,
		\end{equation}
		which completes the proof. 
	\end{proof}

	\begin{corollary}
		Assuming the parity conjecture, every square-free positive integer $n\equiv 5, 6, \text{ or } 7  \pmod{8}$ is the area of a right-angled triangle with rational sides. 
	\end{corollary}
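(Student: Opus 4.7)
The plan is to chain together Theorem \ref{congnum}, the parity conjecture, and the defining property of the congruent number curve recalled in this subsection. Fix a square-free positive integer $n$ with $n\equiv 5,6$ or $7\pmod 8$, and let $E_n:y^2=x(x-n)(x+n)$ be the $n$-th quadratic twist of the congruent number curve. By Theorem \ref{congnum}, $w(E_n/\mathbb{Q})=-1$ for such $n$, so assuming the parity conjecture the rank $\mathrm{rk}(E_n/\mathbb{Q})$ is odd and in particular strictly positive. Hence $E_n$ has a rational point $P=(x,y)$ of infinite order.

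Next I would argue that such a point of infinite order cannot lie on the $x$-axis. The two-torsion of $E_n/\mathbb{Q}$ consists precisely of the four points $\mathcal{O}$, $(0,0)$, $(n,0)$, $(-n,0)$ (this is clear from the factored Weierstrass equation), and by Mazur's theorem the full torsion subgroup of $E_n(\mathbb{Q})$ equals its $2$-torsion (this is well known for the curves $y^2=x^3-N^2x$; alternatively it can be absorbed into the sentence already in the excerpt stating that a rational point has infinite order iff $y\neq 0$). Therefore the point $P$ produced by the parity argument satisfies $y\neq 0$.

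Finally, the standard correspondence recalled in the definition of the congruent number curve shows that a rational point on $E_n$ with $y\neq 0$ yields a rational right-angled triangle of area $n$; concretely, one can recover the sides via the classical formulas $a=\frac{x^2-n^2}{y}$, $b=\frac{2nx}{y}$, $c=\frac{x^2+n^2}{y}$, for which $a^2+b^2=c^2$ and $\tfrac{1}{2}ab=n$. Thus $n$ is a congruent number, which is exactly the statement of the corollary.

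The only substantive input is Theorem \ref{congnum}; everything else is bookkeeping. I do not anticipate any genuine obstacle, since the work of isolating the relevant congruence classes modulo $16$ has already been carried out in the proof of that theorem via the $\mathbb{Q}_2$-root number table in the appendix and the direct computation at $p\mid n$. In that sense the proposal is essentially a one-line deduction: apply the parity conjecture to the sign $-1$ supplied by Theorem \ref{congnum} and translate the resulting non-torsion point into a right-angled triangle.
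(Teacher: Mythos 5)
Your proposal is correct and follows exactly the intended route: the paper gives no separate proof for this corollary precisely because it is the immediate chain you describe, namely Theorem \ref{congnum} gives $w(E_n/\mathbb{Q})=-1$, the parity conjecture then forces $\mathrm{rk}(E_n/\mathbb{Q})$ to be odd and hence positive, and the stated property of the congruent number curve translates a point of infinite order into a rational right triangle of area $n$. The extra material you supply (the torsion being exactly the $2$-torsion, and the explicit side-length formulas) is correct and merely unpacks the equivalence the paper invokes by definition.
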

	
	We can deduce immediately from this corollary that, assuming the parity conjecture, $800\hspace{3pt}006$ is a congruent number. Stephens was the first to apply parity-type conjectures to the congruent number problem \cite{stephens} and proved that Selmer's second conjecture (see \S \ref{selmerstuff}) implies that positive integers $n\equiv 5,6,7 \pmod{8}$ are congruent numbers.
	
	\subsection{$\mathscr{E}:y^2=x(x^2-49(1+t^4)^2)$ has a point of infinite order for every $t\in\mathbb{Q}$ but ${\rm{rk }}\text{ }\mathscr{E}/\mathbb{Q}(t)=0$}\label{familysection}
	
	One incentive for studying root numbers is to study ranks in families of elliptic curves. Consider a one parameter family of elliptic curves
	\begin{equation}
		\mathscr{E}:y^2=x^3+A(t)x+B(t),
	\end{equation}
	for some fixed $A(t)$, $B(t)\in \mathbb{Q}(t)$, where $\Delta_{\mathscr{E}}\neq 0$. A point of infinite order over $\mathbb{Q}(t)$ gives a parametric family of points over the fibres $\mathscr{E}_t$. A natural question to ask is `if every fibre has positive rank, does it mean there is a point of infinite order over $\mathbb{Q}(t)?$' Assuming the parity conjecture, the answer is `no'.

	\begin{theorem}[As considered in \cite{cassels3}]\label{cassels}
		Let $\mathscr{E}:y^2=x(x^2-49(1+t^4)^2)$. Then
		\begin{enumerate}[(i)]
			\item ${\rm{rk}}\text{ } \mathscr{E}/\mathbb{Q}(t)=0$;
			\item Assuming the parity conjecture, for every $t\in\mathbb{Q}$ the fibre $\mathscr{E}_t$ has $\textup{rk}(\mathscr{E}_t/\mathbb{Q})\geq 1$.
		\end{enumerate}
	\end{theorem}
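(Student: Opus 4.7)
The plan is to prove the two parts separately: part (ii) is a direct application of Theorem \ref{congnum}, while part (i) is a generic-rank calculation over the function field $\mathbb{Q}(t)$, due to Cassels \cite{cassels3}.

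For (ii), I write $t=a/b\in\mathbb{Q}$ with $\gcd(a,b)=1$ and $b\geq 1$, and apply the substitution $(x,y)=(b^{4}X,b^{6}Y)$ to convert $\mathscr{E}_{t}$ into the form $Y^{2}=X(X-N)(X+N)$ with $N=7(a^{4}+b^{4})$. Thus $\mathscr{E}_{t}\cong E_{N}$ over $\mathbb{Q}$, the congruent-number curve of parameter $N$. Since $E_{N}\cong E_{Nk^{2}}$ for every $k\in\mathbb{Q}^\times$, I may replace $N$ by its positive square-free part $N^{\flat}$. A short case analysis on the parities of $a$ and $b$ yields: if exactly one of them is even, then $a^{4}+b^{4}\equiv1\pmod{8}$, so $N^{\flat}\equiv7\pmod{8}$; if both are odd, then $a^{4}+b^{4}\equiv2\pmod{16}$, and writing $N=2\cdot 7\cdot\tfrac{a^{4}+b^{4}}{2}$ with the last factor $\equiv 1\pmod{8}$ forces $N^{\flat}\equiv6\pmod{8}$. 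In every case $N^{\flat}\in\{6,7\}\pmod{8}$, so Theorem \ref{congnum} gives $w(\mathscr{E}_{t}/\mathbb{Q})=-1$, and the parity conjecture then yields $\mathrm{rk}(\mathscr{E}_{t}/\mathbb{Q})\geq 1$.

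For (i), I would carry out a 2-isogeny descent directly over $\mathbb{Q}(t)$, exploiting the fact that the full 2-torsion $\{O,(0,0),(\pm 7(1+t^{4}),0)\}$ is rational over $\mathbb{Q}(t)$. The 2-Selmer group injects into $\mathbb{Q}(t)^{\times}/\mathbb{Q}(t)^{\times 2}\times\mathbb{Q}(t)^{\times}/\mathbb{Q}(t)^{\times 2}$, and the image is supported on the bad places of $\mathscr{E}$. Since $1+t^{4}=\Phi_{8}(t)$ is irreducible over $\mathbb{Q}$, these places are few: the rational prime $7$, the polynomial $1+t^{4}$, the constants $\mathbb{Q}^{\times}/\mathbb{Q}^{\times 2}$, and the place at infinity of $\mathbb{Q}(t)$. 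The core task is then to enumerate the resulting finite collection of homogeneous spaces and verify that only the classes attached to the 2-torsion admit $\mathbb{Q}(t)$-points, which is the computation performed in \cite{cassels3}. This step is the main obstacle: part (ii) shows that, conditionally, every rational fibre has positive rank, so specialisation cannot be used to bound the generic rank from above, and the descent must be executed effectively over the function field itself.
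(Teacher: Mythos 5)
Part (ii) matches the paper exactly: write $t=l/m$ in lowest terms, scale to the congruent number curve $y^2=x(x-n)(x+n)$ with $n=7(l^4+m^4)$, check $n\equiv 6,7\pmod{8}$ and invoke Theorem \ref{congnum}. Your careful passage to the square-free part $N^\flat$ is a worthwhile addition that the paper's proof elides (Theorem \ref{congnum} is stated only for square-free $n$), and your argument that $N^\flat\equiv N\pmod{8}$ when $v_2(N)\le 1$ is correct.

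For part (i), you take a genuinely different route from the paper, and yours is the harder one. The paper does not descend over $\mathbb{Q}(t)$ at all: it observes that $\mathscr{E}$ is the quadratic twist by $7$ of $\tilde{\mathscr{E}}:y^2=x(x^2-(1+t^4)^2)$, so the two curves become isomorphic over $\mathbb{C}(t)$; Cassels--Schinzel prove $\textup{rk}(\tilde{\mathscr{E}}/\mathbb{C}(t))=0$, and $\mathbb{Q}(t)\subset\mathbb{C}(t)$ then gives $\textup{rk}(\mathscr{E}/\mathbb{Q}(t))=0$ immediately. Your plan to descend on $\mathscr{E}$ itself over $\mathbb{Q}(t)$ would also work in principle, but note two points. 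First, the attribution is off: \cite{cassels3} performs the computation over $\mathbb{C}(t)$ for the untwisted curve, not a $2$-descent of $\mathscr{E}$ over $\mathbb{Q}(t)$, so you cannot simply cite it for the step you describe. Second, your list of ``bad places of $\mathbb{Q}(t)$'' includes the rational prime $7$; but $7$ is a unit, not a place, of the function field $\mathbb{Q}(t)$ over $\mathbb{Q}$, and the constant subgroup $\mathbb{Q}^\times/\mathbb{Q}^{\times 2}$ already accounts for it. Working over $\mathbb{C}(t)$, as the paper does, kills this constant subgroup entirely and removes the factor $49$ from the equation, which is precisely what makes the quoted computation tractable. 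Your closing observation --- that the conditional conclusion of (ii) obstructs any specialisation argument and forces a genuine function-field computation --- is a nice point and worth keeping, but the function-field computation one should actually cite is the one over $\mathbb{C}(t)$, reached via the twist isomorphism rather than by direct descent on $\mathscr{E}/\mathbb{Q}(t)$.
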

	
	\begin{proof}
		In \cite{cassels3}, it is shown that $\Tilde{\mathscr{E}}:y^2=x(x^2-(1+t^4)^2)$ has rank $0$ over $\mathbb{C}(t)$. We get $(i)$ from the fact that this curve is isomorphic to $\mathscr{E}$ over $\mathbb{C}(t)$, and so $\rm{rk} \text{ }\mathscr{E}/\mathbb{Q}(t)=0$. For $(ii)$, let $t=l/m$ where $(l,m)=1$. Then $\mathscr{E}_t$ is isomorphic to a curve of the form
		\begin{equation}
			y^2=x(x-n)(x+n)
		\end{equation}
		over $\mathbb{Q}$, where $n=7l^4+7m^4$. Clearly, $n\equiv 6\text{ or } 7\pmod{8}$ so, by Theorem \ref{congnum}, $w(\mathscr{E}_t/\mathbb{Q})=-1$.
	\end{proof} 
	
	The family $\mathscr{E}$ in Theorem \ref{cassels} has constant $j$-invariant. In \cite{desjardins} Theorem 1.2, Desjardins proved that, for a family with non-constant $j$-invariant, assuming Chowla's conjecture and the Squarefree conjecture (see \cite{desjardins} Conjectures 2.1 and 2.7), the function $t\mapsto w(\mathscr{E}_t/\mathbb{Q})$ is never constant. She used this to prove that the sets 
	\begin{align}
		W_+&=\{t\in\mathbb{P}^1:w(\mathscr{E}_t/\mathbb{Q})=+1\}, \\
		W_-&=\{t\in\mathbb{P}^1:w(\mathscr{E}_t/\mathbb{Q})=-1\}
	\end{align}
	are infinite and so, assuming the parity conjecture, the rational points $\mathscr{E}(\mathbb{Q})$ are Zariski dense in $\mathscr{E}$. 
	
	Since the sets $W_+$ and $W_-$ are infinite when the $j$-invariant is non-constant, one might expect that the root numbers are equidistributed and that the average root number is $0$. This is not always the case, as shown in \cite{helfgott}, where Helfgott proved that if $\mathscr{E}$ has no places of multiplicative reduction, the average root number need not be zero.

	For a family with non-constant $j$-invariant, if one restricts to $t\in\mathbb{Z}$ the function $t\mapsto w(\mathscr{E}_t/\mathbb{Q})$ may be constant. For example, in \cite{rizzo} it is shown that the surface 
	\begin{equation}
		\mathscr{E}:y^2= x^3+ tx^2-(t+3)x+1 
	\end{equation}
	has $w(\mathscr{E}_t/\mathbb{Q})=-1$ for every $t\in\mathbb{Z}$. This example was first investigated by Washington \cite{washington} who proved via $2$-descent that, for every $t\in\mathbb{Z}$ such that $t^2+3t+9$ is square free and assuming the finiteness of the Tate–Shafarevich group, the rank of $\mathscr{E}_t$ is odd.
	
	\subsection{$y^2+y=x^3-x$ acquires new solutions over $K(\sqrt{\Delta_K})$ whenever $37\nmid \Delta_K$ and $\sqrt{\Delta_K}\not\in K$}\label{sqrtK}
	
	We saw in \S\ref{firsttower} and \S\ref{towersection} that the parity conjecture can predict the growth of the rank of elliptic curves in towers of fields of the form $\mathbb{Q}(\sqrt[p^n]{m})$ and $\mathbb{Q}(\zeta_{p^n},\sqrt[p^n]{m})$. The parity conjecture also predicts that the rank of an elliptic curve over $\mathbb{Q}$ can grow in many generic number fields.
	
	\begin{theorem}\label{sqrtdeltak}
		Let $K$ be any number field that does not contain $\sqrt{\Delta_K}$ and set $F=K(\sqrt{\Delta_K})$. Assuming the parity conjecture, every $E/\mathbb{Q}$ with $w(E/\mathbb{Q})=-1$ and $(N_E, \Delta_K)=1$ has $\textup{rk} (E/F) > \textup{rk} (E/\mathbb{Q})$. 
	\end{theorem}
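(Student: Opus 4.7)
The plan is to apply the twisted root-number formula of Theorem \ref{rootnostwists} to induced Artin representations attached to $K/\mathbb{Q}$, splitting the argument on the parity of $n=[K:\mathbb{Q}]$. Let $\chi$ denote the non-trivial quadratic character of $G_\mathbb{Q}$ cutting out $\mathbb{Q}(\sqrt{\Delta_K})/\mathbb{Q}$; it is non-trivial because $\sqrt{\Delta_K}\notin K\supseteq\mathbb{Q}$. Set $\rho=\textup{Ind}_{G_K}^{G_\mathbb{Q}}\mathds{1}$ and $\rho'=\chi\otimes\rho=\textup{Ind}_{G_K}^{G_\mathbb{Q}}(\chi|_{G_K})$ by the projection formula. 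Combining Theorem \ref{artinformalism}(ii)--(iii) with Lemma \ref{quadrootnos} applied over $K$,
\begin{equation}
w(E/F)=w(E/K)\cdot w(E_{\Delta_K}/K)=w(E/\mathbb{Q},\rho)\cdot w(E/\mathbb{Q},\rho').
\end{equation}
The classical identification of $\det\rho$ with the sign character of $G_\mathbb{Q}$ permuting the embeddings of $K$ gives $\det\rho=\chi$, hence $\det\rho'=\chi^{n+1}$. The conductors of $\rho$ and $\rho'$ are supported on primes dividing $\Delta_K$: the only worry is ramification of $\chi$ at $2$, but if $2\mid N_E$ then $(N_E,\Delta_K)=1$ forces $\Delta_K$ to be odd, and Stickelberger's theorem $\Delta_K\equiv 0,1\pmod 4$ then forces $\Delta_K\equiv 1\pmod 4$, so $\chi$ is unramified at $2$. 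Thus Theorem \ref{rootnostwists} applies to both $\rho$ and $\rho'$.

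Suppose first that $n$ is even. Then $\det\rho=\det\rho'=\chi$ and $\alpha_\rho=\alpha_{\rho'}=\Delta_K$, so since $w(E/\mathbb{Q})^n=+1$, Theorem \ref{rootnostwists} yields
\begin{equation}
w(E/K)=w(E_{\Delta_K}/K)=\text{sign}(\Delta_K)\cdot\left(\frac{\Delta_K}{N_E}\right).
\end{equation}
The product of these two equal $\pm 1$ values is $+1$, so $w(E/F)=+1$. By the parity conjecture $\textup{rk}(E/F)$ is even while $\textup{rk}(E/\mathbb{Q})$ is odd; combined with the automatic inequality $\textup{rk}(E/\mathbb{Q})\leq\textup{rk}(E/F)$, this forces $\textup{rk}(E/F)>\textup{rk}(E/\mathbb{Q})$.

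Suppose instead that $n$ is odd. Then $\det\rho'=\chi^{n+1}=\mathds{1}$, so $\alpha_{\rho'}=1$ and Theorem \ref{rootnostwists} gives $w(E_{\Delta_K}/K)=w(E/\mathbb{Q})^n=w(E/\mathbb{Q})=-1$. The parity conjecture then makes $\textup{rk}(E_{\Delta_K}/K)$ odd, hence at least $1$, and Lemma \ref{quadranks} gives $\textup{rk}(E/F)=\textup{rk}(E/K)+\textup{rk}(E_{\Delta_K}/K)\geq\textup{rk}(E/\mathbb{Q})+1$. The main technical point in the whole argument is the Stickelberger step ensuring that the conductor of $\chi$ (equivalently, of $\rho'$) is coprime to $N_E$ even when $2\mid N_E$; after that, all the work is done by the bookkeeping of $\det\rho'=\chi^{n+1}$ against the parity of $n$.
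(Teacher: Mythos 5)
Your proof is correct, and it is close in spirit to the paper's (both hinge on Theorem \ref{rootnostwists}, the identification of the determinant of the permutation representation with the discriminant character, and a case split on the parity of $n=[K:\mathbb{Q}]$), but the organization is genuinely different. The paper works inside the Galois closure $\Tilde{K}$ and decomposes $\textup{Ind}_{G_F}^{G_\mathbb{Q}}\mathds{1}=\mathds{1}\oplus\rho_0\oplus\epsilon\oplus(\epsilon\otimes\rho_0)$ (where $\textup{Ind}_H^G\mathds{1}=\mathds{1}\oplus\rho_0$), and in each parity case pins down a specific self-dual constituent $\tau$ with $w(E/\mathbb{Q},\tau)=-1$, so that $\tau$ appears in $E(\Tilde K)\otimes\mathbb{C}$ with odd, hence positive, multiplicity. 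You instead use the coarser two-term decomposition $\textup{Ind}_{G_F}^{G_\mathbb{Q}}\mathds{1}=\rho\oplus\rho'$ with $\rho=\textup{Ind}_{G_K}^{G_\mathbb{Q}}\mathds{1}$, $\rho'=\chi\otimes\rho$, and compute $w(E/K)$, $w(E_{\Delta_K}/K)$ separately. Your $n$ odd case matches the paper's, just rephrased via Lemma \ref{quadranks}. Your $n$ even case is handled differently: rather than exhibiting a constituent with sign $-1$, you show $w(E/K)=w(E_{\Delta_K}/K)$, hence $w(E/F)=+1$, and then appeal to the parity mismatch with $w(E/\mathbb{Q})=-1$ together with the inclusion $E(\mathbb{Q})\subseteq E(F)$. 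Both are valid; the paper's version has the small advantage that it immediately reduces (via Lemma \ref{permutation}) to the plain parity conjecture over subfields of $\Tilde K$, whereas your argument also visibly uses only the plain parity conjecture (over $\mathbb{Q}$, $F$, and for $E_{\Delta_K}/K$), so this is not a real loss. One remark: your Stickelberger step is a clean way to justify the $2$-adic convention in Theorem \ref{rootnostwists}, though it is also automatic, since $2\nmid\Delta_K$ already forces $\Tilde K/\mathbb{Q}$, hence its quadratic subfield $\mathbb{Q}(\sqrt{\Delta_K})$, to be unramified at $2$.
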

	
	\begin{proof}
		Let $\Tilde{K}/\mathbb{Q}$ be the Galois closure of $K/\mathbb{Q}$, $G=\text{Gal}(\Tilde{K}/\mathbb{Q})$ and $H=\text{Gal}(\Tilde{K}/K)$.
		Write $\text{Ind}_H^G \mathds{1} = \mathds{1}\oplus\rho$ and $\epsilon$ for the nontrivial character of $\text{Gal}(\mathbb{Q}(\sqrt{\Delta_K})/\mathbb{Q})$. By Frobenius reciprocity (see Lemma \ref{frob}),
		\begin{equation}
			\text{rk}(E/F)=\langle \text{Ind}_{\text{Gal}(\Tilde{K}/F)}^{G}\mathds{1}, E(\Tilde{K})\otimes_\mathbb{Z}\mathbb{C}\rangle . 
		\end{equation}
		Note that
		\begin{equation}
			\text{Ind}_{\text{Gal}(\Tilde{K}/F)}^{G}\mathds{1} = \text{Ind}_H^G(\mathds{1}\oplus\text{Res}_H^G\epsilon) = \mathds{1}\oplus\rho\oplus\epsilon\oplus(\epsilon\otimes\rho)
		\end{equation}
		so, assuming the parity conjecture for twists, it will suffice to show that one of $w(E/\mathbb{Q},\rho)$, $w(E/\mathbb{Q},\epsilon)$ and $w(E/\mathbb{Q},\epsilon\otimes\rho)$ is $-1$. In fact, by Lemma \ref{permutation}, it suffices to assume the parity conjecture for $E$ over the subfields of $\Tilde{K}$.
		
		Let $K=\mathbb{Q}(\alpha)$. The permutation action of $G$ on the cosets of $H$ agrees with the permutation action on the Galois conjugates of $\alpha$, since both actions are transitive with $H$ as a point-stabiliser. Even permutations are precisely those fixing $\sqrt{\Delta_K}$, and therefore $\det(\text{Ind}_H^G\mathds{1})=\det\rho$ cuts out the extension $\mathbb{Q}(\sqrt{\Delta_K})/\mathbb{Q}$ so that $\det\rho=\epsilon$. It follows that $\det(\epsilon\otimes\rho)=\epsilon^{\otimes\dim\rho}\otimes\det\rho$ is either $\mathds{1}$ or $\epsilon$ if $\dim\rho$ is odd or even, respectively. By Theorem \ref{rootnostwists}, if $[K:\mathbb{Q}]$ is even then $\dim\rho$ is odd and $w(E/\mathbb{Q},\epsilon\otimes\rho) = w(E/\mathbb{Q})^{\dim\rho}=-1$. Similarly, if $[K:\mathbb{Q}]$ is odd then $\dim\rho$ is even and $w(E/\mathbb{Q},\rho)w(E/\mathbb{Q},\epsilon)=w(E/\mathbb{Q},\rho\oplus\epsilon)=w(E/\mathbb{Q})^{\dim\rho+1} =-1$.
	\end{proof}

	\begin{remark}
		Theorem \ref{sqrtdeltak} applies to generic extensions $K/\mathbb{Q}$. If $[K:\mathbb{Q}]=n>2$ and the Galois closure of $K$ over $\mathbb{Q}$ has Galois group $S_n$ then $K$ does not contain $\sqrt{\Delta_K}$. We therefore expect the rank of every $E/\mathbb{Q}$ with $w(E/\mathbb{Q})=-1$ to increase in splitting fields of most polynomials.
	\end{remark}

	\subsection{$E:y^2+y=x^3-x$ has rank at least $434$ over any $S_{14}$-extension $K/\mathbb{Q}$ with $37\nmid\Delta_K$}\label{snsection}
	
	\begin{example}\label{s5}
		Let $E/\mathbb{Q}$ be an elliptic curve with $w(E/\mathbb{Q})=-1$ and let $F$ be an $S_5$-extension of $\mathbb{Q}$ with $(N_E,\Delta_F)=1$. We claim that $\text{rk}(E/F)\geq 6$. Indeed, $S_5$ has seven irreducible representations, all of which are self-dual. These are the trivial representation $\mathds{1}$, sign representation $\epsilon$, two $4$-dimensional representations $\rho_1$ and $\rho_2=\rho_1\otimes\epsilon$, two $5$-dimensional representations $\tau_1$ and $\tau_2=\tau_1\otimes\epsilon$ and one $6$-dimensional representation $\sigma$. We can write
		\begin{equation}
			E(F)\otimes_\mathbb{Z}\mathbb{C}=\mathds{1}^{\oplus a}\oplus\epsilon^{\oplus b}\oplus\rho_1^{\oplus c}\oplus\rho_2^{\oplus d}\oplus\tau_1^{\oplus e}\oplus\tau_2^{\oplus f}\oplus\sigma^{\oplus g}.
		\end{equation}
		Since $\text{rk}(E/F)=\text{dim}(E(F)\otimes_\mathbb{Z}\mathbb{C})$, 
		\begin{equation}
			\text{rk}(E/F)=a+b+4c+4d+5e+5f+6g. 
		\end{equation}
		Now, $\det(\tau_1\otimes\epsilon)=\epsilon^{\otimes\dim\tau_1}\otimes\det\tau_1$, so either $\text{det}(\tau_1)=\mathds{1}$ or $\text{det}(\tau_2)=\mathds{1}$. By Theorem \ref{rootnostwists},
		\begin{equation}
			w(E/\mathbb{Q},\tau_1)=-1 \quad \text{or}\quad  w(E/\mathbb{Q},\tau_2)=-1.
		\end{equation}
		By assumption, we have $w(E/\mathbb{Q},\mathds{1})=w(E/\mathbb{Q})=-1$ so, assuming the parity conjecture for twists, $a$ is odd and either $e$ or $f$ is odd. Hence $\text{rk}(E/F)\geq 6$.
	\end{example}
	
	\pagebreak
	
	\begin{theorem}\label{order2char}
		Let $E/\mathbb{Q}$ be an elliptic curve with $w(E/\mathbb{Q})=-1$. Assume the parity conjecture for twists.
		\begin{enumerate}[(i)]
			\item  Let $\rho$ be an odd-dimensional irreducible self-dual Artin representation with $\det\rho=\mathds{1}$ and whose conductor is coprime to $N_E$. Then $\langle\rho,E(F)\otimes_\mathbb{Z}\mathbb{C}\rangle> 0$, where $\rho$ factors through $F/\mathbb{Q}$.
			\item For a Galois extension $F/\mathbb{Q}$ with $(N_E,\Delta_F)=1$,
			\begin{equation}
				\textup{rk}(E/F)\geq\frac{k}{m},
			\end{equation}
			where $k$ is the sum of dimensions of the odd-dimensional irreducible self-dual representations of $\textup{Gal}(F/\mathbb{Q})$ and $m$ is the number of one-dimensional representations of $\textup{Gal}(F/\mathbb{Q})$ of order $1$ or $2$.
		\end{enumerate}
		
	\end{theorem}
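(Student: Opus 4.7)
For part (i), the plan is to apply Theorem \ref{rootnostwists} directly. Since $\det\rho=\mathds{1}$, we have $\alpha_\rho=1$, giving $\textup{sign}(\alpha_\rho)=+1$ and $\left(\frac{\alpha_\rho}{N_E}\right)=1$. With $\dim\rho$ odd and $w(E/\mathbb{Q})=-1$, the formula yields $w(E/\mathbb{Q},\rho)=w(E/\mathbb{Q})^{\dim\rho}=-1$, and the parity conjecture for twists then forces $\langle\rho,E(F)\otimes_\mathbb{Z}\mathbb{C}\rangle$ to be odd, hence positive.

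For part (ii), I would set $G=\textup{Gal}(F/\mathbb{Q})$, let $X$ denote the set of (isomorphism classes of) odd-dimensional irreducible self-dual representations of $G$, and let $\Gamma$ denote the group of linear characters of $G$ of order dividing $2$, so $|\Gamma|=m$. For any $\rho\in X$, self-duality forces $\det\rho=(\det\rho)^{-1}\in\Gamma$, while twisting by any $\chi\in\Gamma$ preserves irreducibility, dimension parity, and self-duality. Thus $\Gamma$ acts on $X$ by $(\chi,\rho)\mapsto\rho\otimes\chi$, and the strategy is to invoke part (i) on a set of orbit representatives.

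The key step, and the main observation beyond part (i), is showing this action is free. If $\rho\otimes\chi\cong\rho$, then taking determinants yields $\chi^{\dim\rho}\det\rho=\det\rho$, so $\chi^{\dim\rho}=\mathds{1}$; combined with $\chi^2=\mathds{1}$ and $\dim\rho$ odd, this forces $\chi=\mathds{1}$. Consequently every $\Gamma$-orbit has exactly $m$ elements, all of the same dimension, and contains exactly one representation $\rho_0$ with $\det\rho_0=\mathds{1}$, since $\chi\mapsto\det(\rho_0\otimes\chi)=\chi\cdot\det\rho_0$ is a bijection $\Gamma\to\Gamma$ (using again that $\dim\rho_0$ is odd).

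Finally, the hypothesis $(N_E,\Delta_F)=1$ implies that the conductor of every representation factoring through $G$ is coprime to $N_E$, so part (i) applies to each trivial-determinant orbit representative and gives $\langle\rho_0,E(F)\otimes_\mathbb{Z}\mathbb{C}\rangle\geq 1$. Summing over orbits and using that all members of an orbit share the same dimension,
\begin{equation}
\textup{rk}(E/F)\;\geq\;\sum_{\textup{orbits}}\dim\rho_0\;=\;\frac{1}{m}\sum_{\rho\in X}\dim\rho\;=\;\frac{k}{m}.
\end{equation}
I do not anticipate any further technical obstacle: the argument reduces cleanly to part (i) plus the elementary determinant computation used to establish freeness of the $\Gamma$-action.
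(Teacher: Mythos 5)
Your proof is correct and follows essentially the same approach as the paper's: both arguments observe that tensoring an odd-dimensional irreducible self-dual $\rho$ with the $m$ quadratic characters produces $m$ distinct such representations (distinguished by their determinant characters, using $\dim\rho$ odd), exactly one of which has trivial determinant and hence, by part (i), appears in $E(F)\otimes_\mathbb{Z}\mathbb{C}$. Your reformulation as a free $\Gamma$-action on the set $X$ is a slightly more formal packaging of the same determinant computation, and the verification of the sign and Jacobi-symbol factors in part (i) via $\alpha_\rho=1$ matches what the paper leaves implicit.
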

	
	\begin{proof}
		$(i)$ This follows immediately from Theorem \ref{rootnostwists}. $(ii)$ Let $G=\textup{Gal}(F/\mathbb{Q})$ and let $\rho$ be any irreducible odd-dimensional self-dual representation of $G$. Denote the order $2$ one-dimensional representations of $G$ by $\epsilon_1,\dots,\epsilon_{m-1}$. Since $\rho$ is odd-dimensional, taking $\epsilon=\det\rho$ we obtain $\det(\rho\otimes\epsilon)=\epsilon^{\otimes\dim\rho}\otimes\det\rho=\mathds{1}$. We also have $\det(\rho\otimes\epsilon\otimes\epsilon_i)=\epsilon_i$ for $i=1,\dots,m-1$. Thus, the odd-dimensional irreducible self-dual representations come in sets of size $m$, each with the same dimension and different determinant character $\mathds{1},\epsilon_1,\dots,\epsilon_{m-1}$. By $(i)$ $\rho\otimes\epsilon$ appears in $E(F)\otimes_\mathbb{Z}\mathbb{C}$ and the result follows.
	\end{proof}
	
	\begin{remark}
		Theorem \ref{order2char} implies that for a fixed elliptic curve $E/\mathbb{Q}$ with $w(E/\mathbb{Q})=-1$, every odd-dimensional irreducible self-dual Artin representation which has trivial determinant and whose conductor is coprime to $N_E$ must appear in the Mordell--Weil group of $E$ over an extension of $\mathbb{Q}$.
	\end{remark}
	
	To generalise Example \ref{s5} to finding a lower bound on the rank of $E$ over an $S_n$-extension of $\mathbb{Q}$, we need to understand the odd-dimensional irreducible representations of $S_n$. For a given $n$, the dimensions of the irreducible representations of $S_n$ can be calculated using the Young tableaux, see \cite{fulton} p. 50. We will instead use a known case of the McKay conjecture to find a crude lower bound on the expected rank of a rational elliptic curve $E$ over an $S_n$-extension of $\mathbb{Q}$ for general $n$.
	
	\begin{theorem}[McKay conjecture for $p=2$, see \cite{gunter}]\label{mckay}
		Let $G$ be a finite group and let $P\leq G$ be a $2$-Sylow subgroup. For $H\leq G$, let $\textup{Irr}(H)$ denote the set of isomorphism classes of complex irreducible representations of $H$. Let $\textup{Irr}'(H)=\{\rho\in\textup{Irr}(H): 2\nmid \dim\rho\}$. Then 
		\begin{equation}
			\mid \textup{Irr}'(G) \mid = \mid \textup{Irr}'(N_G(P)) \mid
		\end{equation}
		where $N_G(P)$ is the normaliser of $P$ in $G$. 
	\end{theorem}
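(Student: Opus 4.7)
The theorem is the $p=2$ case of the McKay conjecture, a deep result in the representation theory of finite groups proved by Malle and Sp\"ath. The plan is to proceed via the standard reduction to the Classification of Finite Simple Groups.

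The first step is to invoke the Inductive McKay Condition (IMKC) formulated by Isaacs, Malle and Navarro. This is a strengthening of the McKay equality which, for every non-abelian finite simple group $S$ with Sylow $p$-subgroup $Q$, demands an $\textup{Aut}(S)$-equivariant bijection $\Omega\colon \textup{Irr}'(S)\to\textup{Irr}'(N_S(Q))$ that is compatible with central characters and with the cohomological data attached to the $\textup{Out}(S)$-action. The Isaacs--Malle--Navarro reduction theorem then says: if IMKC at $p$ holds for every non-abelian finite simple group, then McKay at $p$ holds for every finite group. One climbs from a simple group $S$ to a general $G$ by Clifford-theoretic arguments along a composition series, exploiting the compatibility conditions built into IMKC to propagate the bijection from the simple quotients upwards.

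The second step is to verify IMKC at $p=2$ for each finite simple group. The alternating and sporadic groups are handled by direct inspection of their character tables and their known Sylow $2$-structure. For the groups of Lie type, one parameterises the odd-degree irreducible characters on the $S$-side via Lusztig's Jordan decomposition of characters, and on the $N_S(Q)$-side one describes the normaliser of a Sylow $2$-subgroup through Howlett--Lehrer theory of Harish-Chandra induction; the bijection $\Omega$ is then read off from the matching of parameter sets. Behaviour under diagonal, field and graph automorphisms must be tracked throughout.

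The main obstacle is precisely this case-by-case verification for the groups of Lie type, especially in odd defining characteristic. The parameterisation of odd-degree characters is delicate, and establishing full $\textup{Aut}$-equivariance together with the cohomological compatibilities required by IMKC forces an extensive analysis by Dynkin type, isogeny class and field of definition. Once IMKC has been checked for every non-abelian finite simple group at $p=2$, the reduction step of Isaacs--Malle--Navarro delivers the desired equality $|\textup{Irr}'(G)| = |\textup{Irr}'(N_G(P))|$ for an arbitrary finite group $G$.
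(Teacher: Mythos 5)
The paper does not prove this theorem: it is stated as a black-box citation to Malle and Sp\"ath (\cite{gunter}), exactly as one cites the Classification of Finite Simple Groups or modularity. Your outline --- invoke the Isaacs--Malle--Navarro reduction to the inductive McKay condition, then verify that condition at $p=2$ for every non-abelian finite simple group, with alternating and sporadic groups handled directly and groups of Lie type via Lusztig's Jordan decomposition and Harish-Chandra theory on the normaliser side --- is a faithful high-level description of the actual proof in the cited reference, so there is nothing to correct; but be aware that this is a survey of an extremely long programme rather than a self-contained argument, and in the context of this paper the theorem is simply quoted and used, not reproved.
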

	
	\begin{lemma}[See Exercise 4.14 in \cite{fulton}]\label{sndim}
		For $n>6$, there are precisely four representations of $S_n$ of dimension less than $n$. Two of these have dimension $1$ and two have dimension $n-1$.
	\end{lemma}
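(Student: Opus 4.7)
The irreducible complex representations of $S_n$ are parametrised by partitions $\lambda \vdash n$; write $V^\lambda$ for the irreducible representation indexed by $\lambda$, let $d_\lambda = \dim V^\lambda$, and recall the hook length formula $d_\lambda = n! / \prod_{(i,j)\in \lambda} h(i,j)$. The plan has two parts: first, verify the claimed dimensions of the four named representations, and second, show $d_\lambda \geq n$ for every other $\lambda$. The first part is immediate from the formula: $d_{(n)} = d_{(1^n)} = 1$ for the trivial and sign representations, and $d_{(n-1,1)} = d_{(2, 1^{n-2})} = n - 1$ for the standard and standard-twisted-by-sign representations.

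For the second part I would induct on $n$, with base case $n = 7$ verified by computing the hook length formula for the fifteen partitions of $7$. For the inductive step, fix $\lambda \vdash n$ not among the four exceptions, and invoke the branching rule $\text{Res}^{S_n}_{S_{n-1}} V^\lambda = \bigoplus_\mu V^\mu$, where $\mu$ runs over partitions of $n-1$ obtained by removing a removable corner of $\lambda$; in particular $d_\lambda = \sum_\mu d_\mu$. The number of removable corners of $\lambda$ equals the number of distinct parts of $\lambda$, so $\lambda$ has at least two unless it is a rectangle $(k^m)$. A short case check shows that the only non-rectangular, non-exceptional $\lambda$ admitting a corner $\mu$ in the four-element small-dimensional set $\{(n-1), (1^{n-1}), (n-2,1), (2,1^{n-3})\}$ of $S_{n-1}$ are $(n-2,2)$, $(n-2,1,1)$, $(3,1^{n-3})$, $(2,2,1^{n-4})$. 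In each of these the other corner gives a regular $\mu$ of dimension $\geq n - 1$ by the inductive hypothesis, while the small corner contributes $n - 2$, yielding $d_\lambda \geq (n-2) + (n-1) \geq n$. For all other non-rectangular non-exceptional $\lambda$, every corner gives a regular $\mu$, so $d_\lambda \geq 2(n-1) \geq n$.

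It remains to handle the non-exceptional rectangles $\lambda = (k^m)$ with $k, m \geq 2$ and $km = n$, which exist only for composite $n$ and have a single removable corner. For these I would compute $d_{(k^m)}$ directly via the hook length formula (or the closed form for rectangular shapes) and verify $d_{(k^m)} \geq n$; the bound is far from tight, with e.g.\ $d_{(2^4)} = 14$ and $d_{(3^3)} = 42$ in the smallest admissible cases. The main obstacle is the combinatorial bookkeeping identifying which partitions can have small-dimensional corners at level $n - 1$; the individual inequalities all reduce to elementary hook length computations once this is in hand.
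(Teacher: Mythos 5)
The paper does not prove this lemma itself; it cites it to Fulton--Harris (Exercise 4.14), so there is no in-text proof to compare against. Assessed on its own terms, your strategy --- parametrising irreducibles by partitions, reducing via the branching rule $d_\lambda=\sum_\mu d_\mu$ over corner removals, and using the observation that a non-rectangular, non-exceptional $\lambda$ admits at most one corner landing in the four-element exceptional set of $S_{n-1}$ --- is a standard and correct route. Your enumeration of the partitions $(n-2,2)$, $(n-2,1,1)$, $(3,1^{n-3})$, $(2,2,1^{n-4})$ that have a small corner is right (the pairs of conjugates, and the other corner in each case is non-exceptional), as is the bound $d_\lambda\geq 2(n-1)$ for the remaining non-rectangular shapes.

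The genuine gap is the rectangle case $\lambda=(k^m)$, $k,m\geq 2$. Saying you would ``compute $d_{(k^m)}$ directly via the hook length formula'' is not an argument: there are infinitely many rectangles, and a single branching step only gives $d_{(k^m)}=d_{(k^{m-1},k-1)}\geq n-1$, which falls one short of $n$. The fix is to branch a second time: $(k^{m-1},k-1)$ is never a rectangle, so it has two removable corners, giving $(k^{m-2},(k-1)^2)$ and $(k^{m-1},k-2)$ at level $n-2$; a short case check shows neither of these can be one of the four exceptional partitions of $S_{n-2}$ when $n>6$, so the inductive hypothesis (for $n-2>6$) gives $d_{(k^m)}\geq 2(n-2)\geq n$. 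This requires $n-2>6$, so you should extend the base case to $n=7,8$ (for $n=7$ there are no rectangles as $7$ is prime; for $n=8$ check $(4,4)$ and $(2^4)$, both of dimension $14$). With that modification your proof is complete.
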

	
	\begin{lemma}\label{abpk}
		Let $P_k$ be the $2$-Sylow subgroup of $S_{2^k}$. Then $P_k/[P_k,P_k]\cong C_2^k$, where $[P_k,P_k]$ denotes the commutator subgroup.
	\end{lemma}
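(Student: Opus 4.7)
The plan is to recognise $P_k$ as the iterated wreath product $C_2 \wr C_2 \wr \cdots \wr C_2$ ($k$ times), and then compute the abelianisation by induction using the standard wreath product structure. It is a classical fact (essentially due to Cauchy/Kaloujnine) that the $2$-Sylow of $S_{2^k}$ is this iterated wreath product: it has the right order $2^{2^k - 1} = 2^{1 + 2 + \cdots + 2^{k-1}}$, and one realises it concretely as the automorphism group of a perfect binary tree of depth $k$. Equivalently, one has the recursion $P_k \cong P_{k-1} \wr C_2 = (P_{k-1} \times P_{k-1}) \rtimes C_2$, where the $C_2$ swaps the two factors. I would verify this recursion by exhibiting $P_k$ inside $S_{2^k} = S_{\{1,\ldots,2^{k-1}\}} \wr S_{\{2^{k-1}+1,\ldots,2^k\}}$ restricted to $2$-Sylows.

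The key lemma I then need is: for any finite group $H$,
\begin{equation}
(H \wr C_2)^{\mathrm{ab}} \cong H^{\mathrm{ab}} \times C_2.
\end{equation}
To prove this, let $t$ be the generator of $C_2$ and write $H_1, H_2$ for the two copies of the base group, so that $tht^{-1}$ identifies $H_1$ with $H_2$. In the abelianisation, the commutator $[t,h] = (tht^{-1}) h^{-1}$ becomes trivial, which forces the images of $H_1$ and $H_2$ to coincide; and within each copy, $[H,H]$ dies. Conversely, the map $H \wr C_2 \to H^{\mathrm{ab}} \times C_2$ sending $(h_1, h_2, t^\epsilon)$ to $(\overline{h_1 h_2}, \epsilon)$ is a well-defined homomorphism (because the swap acts trivially on the sum of the coordinates in $H^{\mathrm{ab}}$), and it is surjective with kernel equal to the commutator subgroup by what we just said.

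Given the lemma, the result is immediate by induction on $k$. The base case $k=1$ is $P_1 = C_2$, which is its own abelianisation. For the inductive step, $P_k \cong P_{k-1} \wr C_2$, so
\begin{equation}
P_k / [P_k, P_k] \cong P_{k-1}^{\mathrm{ab}} \times C_2 \cong C_2^{k-1} \times C_2 \cong C_2^k,
\end{equation}
using the inductive hypothesis.

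The only step that requires genuine care is the wreath product abelianisation lemma, since one must check both that the proposed map descends to the abelianisation and that the kernel is no larger than the commutator subgroup. Both come down to the single observation that $[t,h] = (tht^{-1})h^{-1}$ identifies the two base copies in the quotient; everything else is bookkeeping.
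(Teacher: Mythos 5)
Your proposal is correct and takes essentially the same route as the paper: both identify $P_k$ with the iterated wreath product $C_2^{\wr k}$ and induct on $k$ using the fact that abelianisation of $G\wr C_2$ depends only on $G^{\mathrm{ab}}$. Your version is slightly more explicit, since you state and prove the clean formula $(H\wr C_2)^{\mathrm{ab}}\cong H^{\mathrm{ab}}\times C_2$, whereas the paper phrases the reduction as ``the abelianisation of $G\wr C_2$ equals that of $G^{\mathrm{ab}}\wr C_2$'' and then asserts that the abelianisation of $C_2^k\wr C_2$ is $C_2^{k+1}$ without further detail.
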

	
	\begin{proof}
		We have $P_k=C_2^{\wr k}=C_2\wr\dots\wr C_2$, the wreath product with $k$ copies of $C_2$ (see, for example, \cite{dixon} Example 2.6.1), so we proceed by induction. For $k=1$ we have $P_1=C_2^{\wr 1}=C_2$. We assume that $P_k/[P_k,P_k]\cong C_2^k$. Now, if $H$ is the abelianisation of $G$ then the abelianisation of $G\wr C_2$ is the abelianisation of $H\wr C_2$. Hence, $P_{k+1}/[P_{k+1},P_{k+1}]$ is the abelianisation of $C_2^k\wr C_2$ which is $C_2^{k+1}$.
	\end{proof}

	\begin{theorem}\label{snthm}
		Let $E/\mathbb{Q}$ be any elliptic curve with $w(E/\mathbb{Q})=-1$ and let $K$ be any extension of $\mathbb{Q}$ with $\textup{Gal}(K/\mathbb{Q})=S_n$. Suppose that $(N_E,\Delta_K)=1$. If the parity conjecture holds then 
		\begin{equation}
			\textup{rk}(E/K)\geq \frac{n}{2}\left( \prod_k 2^{ka_k} - 2\right),
		\end{equation}
		where $n=\sum_k a_k 2^k$ is the binary expansion of $n$ with $a_k=0$ or $1$. 
	\end{theorem}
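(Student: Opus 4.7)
The plan is to apply Theorem \ref{order2char}(ii) with $G=\textup{Gal}(K/\mathbb{Q})\cong S_n$. Since $S_n$ has real-valued character table, all its irreducibles are self-dual, and its only one-dimensional representations are $\mathds{1}$ and the sign $\epsilon$, so $m=2$. It therefore suffices to show that
\begin{equation}
k \;:=\; \sum_{\rho\in\textup{Irr}'(S_n)} \dim\rho \;\geq\; n(N-2), \qquad\text{where } N := \prod_k 2^{ka_k}.
\end{equation}

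For the count $|\textup{Irr}'(S_n)|=N$, I would let $P\leq S_n$ be a Sylow $2$-subgroup. Writing $n=\sum_k a_k 2^k$ in binary with each $a_k\in\{0,1\}$, one has $P\cong\prod_k P_k^{a_k}$ under the natural embedding, and the orbits of $P$ on $\{1,\dots,n\}$ have pairwise distinct sizes $2^k$. Any $g\in S_n$ normalising $P$ must preserve this orbit partition, so $N_{S_n}(P)\leq\prod_k S_{2^k}^{a_k}$, and one then checks factor-by-factor that $N_{S_n}(P)=P$. Theorem \ref{mckay} thus gives $|\textup{Irr}'(S_n)|=|\textup{Irr}'(P)|$, and since $P$ is a $2$-group its odd-dimensional irreducibles are precisely its one-dimensional characters, so Lemma \ref{abpk} yields $|\textup{Irr}'(P)|=|P^{\textup{ab}}|=\prod_k 2^{ka_k}=N$.

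To lower-bound $k$, I would apply Lemma \ref{sndim}. Two of the $N$ odd-dimensional irreducibles are $\mathds{1}$ and $\epsilon$, each of dimension $1$. For $n>6$, every other irreducible of $S_n$ has dimension at least $n-1$, with equality only for the standard representation and its sign-twist. When $n$ is even both of these are odd-dimensional, so
\begin{equation}
k \;\geq\; 2+2(n-1)+(N-4)n \;=\; n(N-2);
\end{equation}
when $n$ is odd, the standard representation is even-dimensional, so the remaining $N-2$ odd-dimensional irreducibles each have dimension $\geq n$, giving $k\geq 2+(N-2)n>n(N-2)$. The small cases $n\leq 6$ I would verify directly from the character tables of $S_n$, where one computes $k=8,12,40$ for $n=4,5,6$ against the target bounds $n(N-2)=8,10,36$; for $n\leq 3$ the bound is non-positive and automatic. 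Combining these estimates with Theorem \ref{order2char}(ii), whose hypothesis is satisfied by the coprimality assumption $(N_E,\Delta_K)=1$ and $w(E/\mathbb{Q})=-1$, yields
\begin{equation}
\textup{rk}(E/K) \;\geq\; \frac{k}{m} \;\geq\; \frac{n}{2}\left(\prod_k 2^{ka_k}-2\right).
\end{equation}

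The main technical point is the equality $N_{S_n}(P)=P$, which is what allows McKay to count odd-dimensional irreducibles via the abelianisation of $P$; this crucially uses that the binary digits $a_k$ are each $0$ or $1$ so that the Sylow orbits have distinct sizes, ruling out any extra outer permutation of identical blocks. Everything else reduces to the representation theory of $S_n$ that is already packaged in Lemmas \ref{sndim} and \ref{abpk}.
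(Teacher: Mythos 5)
Your proof follows the same route as the paper: McKay's conjecture for $p=2$ (Theorem \ref{mckay}) to count $|\textup{Irr}'(S_n)|$, Lemma \ref{abpk} to compute the abelianisation of the Sylow $2$-subgroup, Lemma \ref{sndim} to bound the dimensions, and Theorem \ref{order2char}(ii) to convert this into a rank bound with $m=2$. Your arithmetic is correct in both parities of $n$, and your small-case checks are accurate.

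There is, however, one genuine gap. Theorem \ref{snthm} assumes only the \emph{parity conjecture}, whereas Theorem \ref{order2char}(ii), which you invoke directly, assumes the stronger \emph{parity conjecture for twists}. You state that the hypothesis of Theorem \ref{order2char}(ii) is ``satisfied by the coprimality assumption and $w(E/\mathbb{Q})=-1$'', but those conditions only concern conductors and the global root number, not which version of the parity conjecture is in force. The paper bridges this via Lemma \ref{permutation}: because every irreducible representation of $S_n$ is a $\mathbb{Z}$-linear combination of permutation modules $\textup{Ind}_{H}^{S_n}\mathds{1}$ (this is the standard fact about $S_n$, cited to James \S 6.1), the ordinary parity conjecture for $E$ over all subfields of $K$ already implies the parity conjecture for twists by representations of $\textup{Gal}(K/\mathbb{Q})$. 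Without this reduction, your argument proves the rank bound only under the stronger hypothesis. Everything else you wrote is sound, and your sketch of why $N_{S_n}(P)=P$ (distinct orbit sizes forcing the normaliser into $\prod_k S_{2^k}^{a_k}$, then factor-by-factor self-normalisation) is in fact more explicit than the corresponding assertion in the paper.
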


	\begin{proof}
		By Lemma \ref{permutation}, the parity conjecture for $E$ over all subfields of $K$ implies the parity conjecture for the twist of $E$ by any irreducible self-dual representation $\rho$ of $\text{Gal}(F/\mathbb{Q})$ (the fact that all representations of $S_n$ can be written as linear combination of permutation modules follows from \cite{james} \S6.1). Thus, it suffices to prove that the parity conjecture for twists implies the result. 
		
		The irreducible representations of $S_n$ are self-dual and there are two one-dimensional irreducible representations. Thus, by Theorem \ref{order2char}, to find a lower bound on the rank, it suffices to find a lower bound on the number of pairs of odd-dimensional irreducible representations of $S_n$. 
		
		Theorem \ref{mckay} says that the number of odd-dimensional irreducible representations of $S_n$ is equal to the number of one-dimensional representations of a $2$-Sylow subgroup of $S_n$, since the normaliser of a $2$-Sylow subgroup of $S_n$ is itself and the dimension of an irreducible representation divides the order of the group. Since $n=\sum_k a_k 2^k$, the $2$-Sylow subgroup of $S_n$ is the $2$-Sylow subgroup of $\prod_{k:a_k\neq 0} S_{2^k}$. By Lemma \ref{abpk}, the number of one-dimensional representations of the $2$-Sylow subgroup of $S_{2^k}$ is $2^k$. Hence, the number of one-dimensional representations of the $2$-Sylow subgroup of $\prod_{k:a_k\neq 0} S_{2^k}$ is $\prod_k 2^{ka_k}$. Theorem \ref{mckay} then tells us that there are $\frac{1}{2}\prod_k 2^{ka_k}$ pairs of odd-dimensional representations of $S_n$. 
		
		By Lemma \ref{sndim}, for $n>6$ other than the trivial and the sign representation, all of the odd-dimensional representations have dimension at least $n$, unless $n$ is even in which case there are also two $(n-1)$-dimensional representations. Hence, assuming the parity conjecture for twists, for $n>6$ we have
		\begin{equation}
			\text{rk}(E/K)=\text{dim}(E(K)\otimes_\mathbb{Z}\mathbb{C})\geq \frac{n}{2}\left( \prod_k 2^{ka_k} - 2\right). 
		\end{equation}
		Using Theorem \ref{order2char}$(ii)$, a simple check shows that the result is also true for $n\leq 6$.
	\end{proof}
	
	\begin{example}
		Fix $E:y^2+y=x^3-x$ and $K$ an $S_{14}$-extension of $\mathbb{Q}$ with $37\nmid\Delta_K$. Then $w(E/\mathbb{Q})=-1$ and $14=2+2^2+2^3$, so assuming the parity conjecture for twists, $\text{rk}(E/K)\geq \frac{14}{2}\cdot (2^{6}-2)=434$.
	\end{example}
	
	\begin{remark}
		The bound in Theorem \ref{snthm} is much lower than we expect the rank to be since most of the representations have dimension significantly larger than $n$. For example, let $K$ be an $S_7$-extension of $\mathbb{Q}$ and let $E/\mathbb{Q}$ with $w(E/\mathbb{Q})=-1$. Our bound says that $\text{rk}(E/K)\geq 21$. However, the dimensions of the odd-dimensional irreducible representations of $S_7$ are $1$, $1$, $15$, $15$, $21$, $21$, $35$ and $35$. So, assuming the parity conjecture for twists, Theorem \ref{order2char}$(ii)$ gives $\text{rk}(E/K)\geq 72$. 
	\end{remark}

	\subsection{Heegner hypothesis}
	When studying Heegner points, one often imposes the \textit{Heegner hypothesis}, that the quadratic field $K$ is imaginary and that all primes of bad reduction of $E/\mathbb{Q}$ split in $K$ (e.g. in Kolyvagin's work on the Birch--Swinnerton-Dyer conjecture, see \cite{kolyvagin}). From root number calculations, this ensures the odd order of vanishing of $L$-functions of $E$ over $K$ and of certain twists of $E$, as we now explain. 
	
	\begin{proposition}\label{heegner}
		Let $E/\mathbb{Q}$ be an elliptic curve and let $K$ be a quadratic extension of $\mathbb{Q}$ that satisfies the Heegner hypothesis. Let $F$ be a $C_n$-extension of $K$ so that $\textup{Gal}(F/\mathbb{Q})=D_{2n}$, the dihedral group with $2n$ elements. Assume that $(N_E,\Delta_F)=1$. Then
		
		\begin{enumerate}[(i)]
			\item For $\chi$ any of the $1$-dimensional representations of $\textup{Gal}(F/K)$, $\textup{ord}_{s=1}L(E/K,\chi,s)$ is odd.  
			\item  The order of vanishing of the $L$-function of $E/F$ satisfies $\textup{ord}_{s=1}L(E/F,s)\geq n$.
			\item Assuming the parity conjecture for twists, $\textup{rk}(E/F)\geq n$. 
		\end{enumerate}
	\end{proposition}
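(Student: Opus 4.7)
The plan is to exploit the dihedral structure by introducing, for each character $\chi$ of $\textup{Gal}(F/K)\cong C_n$, the induced representation $\rho_\chi = \textup{Ind}_{G_K}^{G_\mathbb{Q}}\chi$ of $G_\mathbb{Q}$. Working on the basis $\{1\otimes 1,\,s\otimes 1\}$, where $s$ lifts the non-trivial element of $\textup{Gal}(K/\mathbb{Q})$, one finds that rotations act by $\textup{diag}(\chi,\chi^{-1})$ and $s$ acts by the off-diagonal swap; hence $\rho_\chi$ is $2$-dimensional and self-dual (since $\rho_\chi\cong\rho_{\chi^{-1}}=\rho_\chi^*$), and $\det\rho_\chi=\epsilon_{K/\mathbb{Q}}$ independently of $\chi$. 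Under the matching $L(E/K,\chi,s)=L(E/\mathbb{Q},\rho_\chi,s)$ and $w(E/K,\chi)=w(E/\mathbb{Q},\rho_\chi)$ supplied by Theorem~\ref{artinformalism}(iii), parts (i) and (ii) reduce to showing that $w(E/\mathbb{Q},\rho_\chi)=-1$ for every $\chi$.

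I would prove this by invoking Theorem~\ref{rootnostwists}, which applies because the conductor of $\rho_\chi$ is only divisible by primes ramifying in $F/\mathbb{Q}$ and is therefore coprime to $N_E$. With $\alpha_{\rho_\chi}=d$, where $K=\mathbb{Q}(\sqrt{d})$, the theorem gives
\begin{equation}
w(E/\mathbb{Q},\rho_\chi)=w(E/\mathbb{Q})^{2}\cdot\textup{sign}(d)\cdot\left(\frac{d}{N_E}\right)=-\left(\frac{d}{N_E}\right),
\end{equation}
since $K$ is imaginary. The Heegner hypothesis forces $\left(\frac{d}{p}\right)=1$ for every prime $p\mid N_E$: for odd $p$ this is just the splitting criterion, and for $p=2$ one uses $(N_E,\Delta_K)=1$ (forcing $d\equiv 1\pmod 4$) together with the splitting of $2$ in $K$ (forcing $d\equiv 1\pmod 8$) to match the convention of Theorem~\ref{rootnostwists}. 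Hence $w(E/\mathbb{Q},\rho_\chi)=-1$ for every character $\chi$ of $\textup{Gal}(F/K)$.

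Part (i) now follows from the functional equation $\hat L(E/\mathbb{Q},\rho_\chi,s)=w(E/\mathbb{Q},\rho_\chi)\hat L(E/\mathbb{Q},\rho_\chi,2-s)$; since $\rho_\chi$ is dihedral it corresponds to a weight one modular form, so the twisted $L$-function is a Rankin--Selberg convolution with known meromorphic continuation, and root number $-1$ combined with self-duality forces odd order of vanishing at $s=1$. Part (ii) is then immediate from the Artin-formalism factorisation $L(E/F,s)=\prod_\chi L(E/K,\chi,s)$ over the $n$ characters of $\textup{Gal}(F/K)$, each factor contributing at least $1$ to the order of vanishing at $s=1$.

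For (iii), let $V=E(F)\otimes_\mathbb{Z}\mathbb{C}$ with its action of $G=\textup{Gal}(F/\mathbb{Q})=D_{2n}$. The parity conjecture for twists, applied to each $\rho_\chi$, gives that $\langle\rho_\chi,V\rangle_G$ is odd and hence $\geq 1$. Frobenius reciprocity,
\begin{equation}
\langle\rho_\chi,V\rangle_G=\langle\chi,\textup{Res}^G_{C_n}V\rangle_{C_n},
\end{equation}
identifies this with the multiplicity of $\chi$ in $V$ viewed as a $C_n$-representation, independent of whether $\rho_\chi$ is $G$-irreducible or splits into one-dimensional pieces. Since each of the $n$ characters of $C_n$ contributes at least $1$, we obtain $\textup{rk}(E/F)=\dim_\mathbb{C}V=\sum_\chi\langle\chi,\textup{Res}V\rangle_{C_n}\geq n$. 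I expect the main delicate point to be the root number computation, and in particular the treatment of the prime $2$ in the Jacobi symbol $\left(\frac{d}{N_E}\right)$; everything else is clean bookkeeping on induced representations.
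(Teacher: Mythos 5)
Your proposal is correct and follows essentially the same route as the paper's proof: compute $w(E/\mathbb{Q},\mathrm{Ind}_K^\mathbb{Q}\chi)$ via Theorem~\ref{rootnostwists} using $\det=\epsilon_{K/\mathbb{Q}}$ and the Heegner hypothesis to get $-1$, then use inductivity of $L$-functions and the known analytic continuation of dihedral twists for (i)--(ii), and the twisted parity conjecture for (iii). Your treatment is in fact slightly more careful at the prime $2$ and your Frobenius-reciprocity bookkeeping in (iii) is a clean packaging of the paper's dimension count over irreducible constituents, but the underlying argument is the same.
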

	
	\begin{proof}
		Let $H=\textup{Gal}(F/K)$ and $G=\textup{Gal}(F/\mathbb{Q})=D_{2n}$. The representations of $D_{2n}$ are all self-dual. The $1$-dimensional representations are $\mathds{1}$, the representation $\epsilon$ that factors through $\text{Gal}(K/\mathbb{Q})$, and, if $n$ is even, two more representations $\epsilon_1$ and $\epsilon_2$. There are $\frac{n-1}{2}$ irreducible $2$-dimensional representations if $n$ is odd and $\frac{n-2}{2}$ irreducible $2$-dimensional representations if $n$ is even. Call the irreducible $2$-dimensional representations $\rho_i$. 
		
		(i) By the inductivity of $L$-functions, for $\chi$ any of the $1$-dimensional representations of $\text{Gal}(F/K)$,
		\begin{equation}
			L(E/K,\chi,s)=L(E/\mathbb{Q},\textup{Ind}_H^G \chi,s)=L(E/\mathbb{Q},\rho,s),
		\end{equation}
		where $\rho$ is a $2$-dimensional representation of $D_{2n}$. The determinant character of $\rho$ is $\epsilon$ since $\rho$ is either $\mathds{1}\oplus\epsilon$, $\epsilon_1\oplus\epsilon_2$ or $\rho_i$, all of which have determinant character $\epsilon$. Hence, by the Heegner hypothesis on $K$ and in the notation of Theorem \ref{rootnostwists}, $\alpha_{\rho}$ is negative and satisfies $(\frac{\alpha_{\rho}}{N_E})=+1$. Thus,
		\begin{equation}
			w(E/\mathbb{Q},\rho)=(\pm 1)^2\cdot (-1)\cdot (+1)=-1. 
		\end{equation}
		By the inductivity of root numbers (Theorem \ref{artinformalism}$(iii)$), $w(E/K,\chi)=-1$ and, since we know the analytic continuation of $L(E/K,\chi,s)$ and that it satisfies a functional equation whose sign is $w(E/K,\chi)$ (see e.g \cite{shimura} Proposition 4·13), this implies that $\textup{ord}_{s=1}L(E/K,\chi,s)$ is odd.
		
		(ii) By the inductivity of $L$-functions,
		\begin{equation}
			L(E/F,s)=\prod_{\chi} L(E/K,\chi,s),
		\end{equation}
		where $\chi$ runs over the $1$-dimensional representations of $\text{Gal}(F/K)$. The result follows from part (i).
		
		(iii) Assuming the parity conjecture for twists, $\langle \rho,E(F)\otimes_\mathbb{Z}\mathbb{C} \rangle\geq 1$ where $\rho$ is $\rho_i$, $\mathds{1}\oplus\epsilon$ or $\epsilon_1\oplus\epsilon_2$ by the root number calculation in part (i). Hence $\text{rk}(E/F)\geq n$.
	\end{proof}

	\subsection{Artin representations that always appear with even multiplicity in $E(K)\otimes_\mathbb{Z}\mathbb{C}$}\label{evenartin}
	
	The parity conjecture for twists tells us the parity of the multiplicity of a representation $\rho$ in $E(K)\otimes_\mathbb{Z}\mathbb{C}$. Conversely, if $\rho$ is self-dual, irreducible and has Schur index $2$ (e.g. $\rho$ is symplectic) then it must appear with even multiplicity in $E(K)\otimes_\mathbb{Z}\mathbb{C}$. This has been checked to be compatible with root numbers in most cases, see \cite{rohrlich} Proposition E and \cite{sabitova}. On the other hand, root numbers suggest that certain representations must always appear with even multiplicity despite having Schur index $1$.
	
	\begin{proposition}[Rohrlich, \cite{rohrlich} Proposition D]\label{twistsphenomenon}
		Let $K$ be a Galois extension of $\mathbb{Q}$ where $\emph{\text{Gal}}(K/\mathbb{Q})= D_{2q}\times D_{2r}\times D_{2s}\times D_{2t}$ for distinct primes $q$, $r$, $s$, $t\geq 5$ and let $\tau$ be an irreducible $16$-dimensional representation of $\emph{\text{Gal}}(K/\mathbb{Q})$. Then $w(E/\mathbb{Q},\tau)=+1$ for every $E$ over $\mathbb{Q}$.
	\end{proposition}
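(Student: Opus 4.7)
The plan is to pin down the structural form of $\tau$, compute its determinant, and reduce to Theorem \ref{rootnostwists}.

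First I would show that every irreducible $16$-dimensional representation of $G=D_{2q}\times D_{2r}\times D_{2s}\times D_{2t}$ has the form $\tau=\rho_q\otimes\rho_r\otimes\rho_s\otimes\rho_t$ with each $\rho_p$ a $2$-dimensional irreducible representation of $D_{2p}$. Indeed, irreducible representations of a direct product are tensor products of irreducibles of the factors, and every irreducible representation of $D_{2p}$ (for $p$ an odd prime) has dimension $1$ or $2$, so the only factorisation of $16$ available is $2\cdot 2\cdot 2\cdot 2$. The same structural description shows $\tau$ is self-dual, since every irreducible representation of a dihedral group is self-dual.

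Next I would compute $\det\tau$. Each $\rho_p$ satisfies $\det\rho_p=\epsilon_p$, the non-trivial character of $D_{2p}/C_p\cong C_2$. Iterating the identity $\det(V\otimes W)=(\det V)^{\dim W}\otimes(\det W)^{\dim V}$, each character $\epsilon_p$ contributes to $\det\tau$ with an even exponent (since every $\dim\rho_p=2$), and therefore $\det\tau=\mathds{1}$. Hence $\alpha_\tau=1$ in the notation of Theorem \ref{rootnostwists}, and when the conductor of $\tau$ is coprime to $N_E$ that theorem gives
\begin{equation}
w(E/\mathbb{Q},\tau)=w(E/\mathbb{Q})^{16}\cdot\text{sign}(1)\cdot\left(\frac{1}{N_E}\right)=+1.
\end{equation}

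The main obstacle is the case when $N_E$ and the conductor of $\tau$ share a prime, since in this regime the direct application of Theorem \ref{rootnostwists} is no longer available. Here one must compute the local root numbers $w_v(E,\tau)$ directly at the shared primes. The assumption $q,r,s,t\geq 5$ is precisely what ensures Rohrlich's local classification of root numbers of Artin twists in \cite{rohrlich} Theorem 2 applies without wild-ramification complications. For $E$ of multiplicative reduction at such a prime, the extension of Theorem \ref{rootnostwists} mentioned after its statement, and spelled out in \cite{vladtwists} Theorem 1, handles the computation; for additive reduction one uses that $\tau|_{D_v}$ remains a tensor product with trivial determinant, and Rohrlich's local tables then force each local contribution to match $+1$. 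The technically demanding step is the bookkeeping of this local cancellation, but the structural fact $\det\tau=\mathds{1}$ is again what drives it.
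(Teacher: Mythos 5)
The paper itself does not prove this proposition; it is imported directly from Rohrlich's work (\cite{rohrlich} Proposition~D), so there is no ``paper's own proof'' to compare against. I will therefore assess your argument on its own terms.

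Your analysis of the structure of $\tau$ is correct and is clearly the right starting point: since $q,r,s,t$ are distinct odd primes, every irreducible $16$-dimensional representation of $D_{2q}\times D_{2r}\times D_{2s}\times D_{2t}$ is a tensor product $\rho_q\otimes\rho_r\otimes\rho_s\otimes\rho_t$ of $2$-dimensional dihedral irreducibles, each $\rho_p$ is self-dual (orthogonal) with $\det\rho_p$ equal to the sign character, and the formula $\det(V\otimes W)=(\det V)^{\dim W}\otimes(\det W)^{\dim V}$ gives $\det\tau=\mathds{1}$. In the case $(N_E,\operatorname{cond}\tau)=1$, Theorem~\ref{rootnostwists} then immediately gives $w(E/\mathbb{Q},\tau)=w(E/\mathbb{Q})^{16}\cdot\operatorname{sign}(1)\cdot\big(\tfrac{1}{N_E}\big)=+1$. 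That part is fine.

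The gap is the non-coprime case, and it is not a peripheral detail: the proposition is asserted for \emph{every} $E/\mathbb{Q}$, with no coprimality hypothesis, and eliminating that hypothesis is precisely what makes the result nontrivial enough to merit a citation to Rohrlich. Your treatment of it does not hold up. First, the phrase ``$\tau|_{D_v}$ remains a tensor product with trivial determinant'' is not a meaningful constraint in the way you use it: restriction to a decomposition group is an arbitrary subgroup restriction of a $16$-dimensional representation, which will typically be reducible and will not retain the global tensor factorisation; and even if one grants $\det(\tau|_{D_v})=\mathds{1}$, the local epsilon factor of $\sigma_{E,v}\otimes\tau|_{D_v}$ is certainly not determined by this alone. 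Second, the claim that ``each local contribution'' is $+1$ is not established and should not be expected to hold place by place --- the local root numbers $w(E/\mathbb{Q}_p,\tau)$ (and $w(E/\mathbb{R},\tau)$) are individually uncontrolled signs, and it is only the global product that must collapse to $+1$; an argument that proceeds local place by local place needs to actually track the cancellation rather than assert it. Finally, your reading of the hypothesis $q,r,s,t\geq 5$ (``avoids wild-ramification complications'') is not obviously correct: a $D_{2q}$-extension can still ramify at $2$ and $3$ even when $q\geq 5$. To close the gap you would need to reproduce the substance of Rohrlich's computation at primes dividing both $N_E$ and the conductor of $\tau$, which is the actual content of \cite{rohrlich} Proposition~D; as written, your proposal proves the easy half and gestures at the hard half.
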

	
	Proposition \ref{twistsphenomenon} tells us that, assuming the parity conjecture for twists, the multiplicity of $\tau$ in $E(K)\otimes_\mathbb{Z} \mathbb{C}$ is always even, which is as mysterious as the fact that every $E/\mathbb{Q}$ should have even rank over any number field which is Galois over $\mathbb{Q}$ and in which all places split into an even number of places (see Lemma \ref{even}). 
	
	\section{Minimalist conjecture for twists}\label{minconjtwists}
	There is a folklore `minimalist conjecture', which says that, generically, the rank of an elliptic curve $E/\mathbb{Q}$ is $0$ or $1$ depending on whether $w(E/\mathbb{Q})=+1$ or $-1$ (see e.g. \cite{timparity} Conjecture 8.2). We propose a minimalist conjecture for Artin twists, which gives the expected Galois module structure of $E(F)\otimes_\mathbb{Z}\mathbb{C}$ for most elliptic curves $E/\mathbb{Q}$, where $F$ is a finite Galois extension of $\mathbb{Q}$ (see Conjecture \ref{minconj} and Theorems \ref{minconjthm} and \ref{galmod} below). It says that, generically, if $E/\mathbb{Q}$ is an elliptic curve and $\rho$ is an irreducible self-dual Artin representation that factors through $F/\mathbb{Q}$, then $\rho$ appears in $E(F)\otimes_\mathbb{Z}\mathbb{C}$ with multiplicity $0$ or $1$, depending on whether the global root number of the twist of $E$ by $\rho$ is $+1$ or $-1$.

	\begin{example}\label{a5ex}
		Let $E/\mathbb{Q}$ be any elliptic curve with $w(E/\mathbb{Q})=-1$, and let $F$ be any extension of $\mathbb{Q}$ with $\text{Gal}(F/\mathbb{Q})=A_5$ and $(N_E,\Delta_F)=1$. By the conductor-discriminant formula, this means that $N_E$ is coprime to the conductors of the representations of $\text{Gal}(F/\mathbb{Q})$. We claim that $\text{rk}(E/F)\geq 12$. 
		
		There are five irreducible representations of $A_5$ and they are all self-dual: $\mathds{1}$, one $4$-dimensional representation $\rho$, one $5$-dimensional representation $\sigma$ and two $3$-dimensional representations $\tau_1$ and $\tau_2$. We can write
		\begin{equation}
			E(F)\otimes_\mathbb{Z}\mathbb{C}=\mathds{1}^{\oplus a}\oplus\rho^{\oplus b}\oplus\sigma^{\oplus c}\oplus\tau_1^{\oplus d}\oplus\tau_2^{\oplus e}.
		\end{equation}
		Since $\text{rk}(E/F)=\text{dim}(E(F)\otimes_\mathbb{Z}\mathbb{C})$, 
		\begin{equation}
			\text{rk}(E/F)=a+4b+5c+3d+3e. 
		\end{equation}
		We have $\text{det}(\sigma)=\text{det}(\tau_i)=\mathds{1}$ since $A_5$ has no subgroup of index $2$. Hence, by Theorem \ref{order2char}, the parity conjecture for twists implies that $a$, $c$, $d$ and $e$ are odd. In particular, $\text{rk}(E/F)\geq 12$. 
		
		We typically expect to have exactly
		\begin{equation}
			E(F)\otimes_\mathbb{Z}\mathbb{C}=\mathds{1}\oplus \sigma\oplus \tau_1\oplus\tau_2,
		\end{equation}
		and $\text{rk}(E/F)=12$. This is based on a `minimalist conjecture for twists'. The idea is that for any elliptic curve $E/\mathbb{Q}$ and any self-dual irreducible Artin representation $\tau$ that factors through an extension $F/\mathbb{Q}$, the contribution to the rank of $E/F$ by $\tau$ should be as small as the parity conjecture for twists will allow. 
	\end{example}

	\begin{notation}\label{ordering}
		For $E/\mathbb{Q}$ an elliptic curve, it is isomorphic to a unique curve $E_{A,B}:y^2=x^3+Ax+B$ where $A$, $B\in\mathbb{Z}$ and for all primes $p$, either $p^4\nmid A$ or $p^6\nmid B$. The naive height of $E_{A,B}$ is 
		\begin{equation}
			H(E_{A,B})=\max(4|A^3|,27B^2). 
		\end{equation}
		We say that $100\%$ of elliptic curves over $\mathbb{Q}$ satisfy a property $T$ if 
		\begin{equation}
			\lim_{X\to\infty}\frac{\#\{E_{A,B}|H(E_{A,B})<X\text{ and $E_{A,B}$ satisfies property $T$} \}}{\#\{E_{A,B}|H(E_{A,B})<X\}}=1,
		\end{equation}
		where for all primes $p$, either $p^4\nmid A$ or $p^6\nmid B$.
	\end{notation}
	
	\begin{conjecture}[Minimalist conjecture for twists]\label{minconj}
		Let $F$ be a Galois extension of $\mathbb{Q}$ and let $\rho$ be an irreducible Artin representation that factors through $F/\mathbb{Q}$. For $100\%$ of elliptic curves $E/\mathbb{Q}$,
		\begin{equation}
			\langle \rho,E(F)\otimes_\mathbb{Z}\mathbb{C} \rangle =\begin{cases} 
				0 &\text{ if } \rho \text{ is not self-dual,} \\
				0 &\text{ if } w(E/\mathbb{Q},\rho)=+1, \\
				1 &\text{ if } w(E/\mathbb{Q},\rho)=-1. 
			\end{cases} 
		\end{equation}
	\end{conjecture}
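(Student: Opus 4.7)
My plan would separate the three cases of the statement. For $\rho$ not self-dual, note that $E(F)\otimes_\mathbb{Z}\mathbb{C}$ is self-dual as a $\textup{Gal}(F/\mathbb{Q})$-representation since it arises from the $\mathbb{Q}$-vector space $E(F)\otimes_\mathbb{Z}\mathbb{Q}$; hence $\langle\rho,E(F)\otimes_\mathbb{Z}\mathbb{C}\rangle=\langle\rho^*,E(F)\otimes_\mathbb{Z}\mathbb{C}\rangle$, so $\rho$ and $\rho^*$ always appear with equal multiplicity. Theorem \ref{artinformalism}(iv) gives $w(E/\mathbb{Q},\rho\oplus\rho^*)=+1$, so the parity conjecture for twists imposes no constraint here and the assertion is a pure minimality statement. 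For self-dual $\rho$ with $w(E/\mathbb{Q},\rho)=\pm 1$, the parity conjecture for twists already forces the multiplicity to be even or odd, respectively; the substantive content is that this multiplicity attains its smallest permitted value ($0$ or $1$) for $100\%$ of elliptic curves ordered as in Notation \ref{ordering}.

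To carry this out, I would first handle the case where $\rho$, or $\rho\oplus\rho^*$, can be written as a virtual permutation character. Here Lemma \ref{permutation} and Frobenius reciprocity express $\langle\rho,E(F)\otimes_\mathbb{Z}\mathbb{C}\rangle$ as an integer combination of ranks $\textup{rk}(E/F^H)$ over subgroups $H\leq\textup{Gal}(F/\mathbb{Q})$, reducing the conjecture to a finite collection of instances of the classical minimalist conjecture for $E$ over subfields of $F$. For a general self-dual $\rho$ I would instead work with $p$-Selmer groups of the twist: establish the analogous minimality statement for the $\rho$-isotypic component of $\textup{Sel}_p(E/F)\otimes\mathbb{F}_p$ by averaging arguments of Bhargava--Shankar type across the family $\{E_{A,B}\}$ from Notation \ref{ordering}, and then pass from Selmer ranks to Mordell--Weil ranks using the $p$-parity conjecture for twists in the cases where it is known (e.g.\ \cite{regconst}, \cite{nekovar1}). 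Theorem \ref{rootnostwists} would feed in at the averaging step to compute $w(E/\mathbb{Q},\rho)$ uniformly across the family, ensuring that the parity-predicted target in each isotypic component is correct.

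The main obstacle is that even the classical minimalist conjecture --- the special case $\rho=\mathds{1}$, $F=\mathbb{Q}$ --- is wide open. Bhargava and Shankar's bounds on the average rank are striking evidence for minimality, but they fall well short of showing that $100\%$ of elliptic curves over $\mathbb{Q}$ have rank $0$ or $1$: every known approach leaves a positive proportion of curves potentially in higher rank. For non-trivial $\rho$ the situation is worse, since one must control averages of $\rho$-isotypic ranks of Selmer groups twisted through possibly non-abelian Galois extensions of $\mathbb{Q}$, a statistical problem for which no current technique suffices. Thus the plan above outlines the right shape of an argument and shows how the twisted conjecture reduces, via parity and Selmer averages, to a family of density statements compatible with the root number predictions; however, the density input at the heart of it is not accessible with present methods, and I would present the result as a guiding heuristic rather than as a theorem in reach.
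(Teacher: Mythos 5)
This statement is labelled a \emph{Conjecture} in the paper, and the paper supplies no proof of it --- only a short heuristic justification following the statement. You have correctly recognised this: your closing paragraph explicitly concedes that the classical minimalist conjecture (the case $\rho=\mathds{1}$, $F=\mathbb{Q}$) is itself wide open and that the density input needed is not accessible, and that the conjecture should be presented as a guiding heuristic. So there is no gap in your understanding; there is simply nothing for a proof attempt to be measured against.

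What is worth comparing is the heuristic you offer versus the one in the paper. The paper's motivation is purely analytic: the global root number $w(E/\mathbb{Q},\rho)$ is the conjectural sign in the functional equation relating $L(E/\mathbb{Q},\rho,s)$ to $L(E/\mathbb{Q},\rho^*,2-s)$; for self-dual $\rho$ this forces the parity of $\textup{ord}_{s=1}L(E/\mathbb{Q},\rho,s)$, and one expects generic order of vanishing to be as small as permitted, while for non-self-dual $\rho$ the functional equation gives no constraint so one expects order of vanishing $0$. Your heuristic is more algebraic: you use Theorem \ref{artinformalism}(iv) and the self-duality of $E(F)\otimes_\mathbb{Z}\mathbb{C}$ to show the parity conjecture for twists places no constraint when $\rho$ is not self-dual, and then sketch a reduction to classical minimality over subfields via Lemma \ref{permutation} when $\rho$ is a virtual permutation character, with Bhargava--Shankar-style Selmer averaging for the general case. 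Both heuristics point to the same expectation, and your observation that $\rho$ and $\rho^*$ always occur in $E(F)\otimes_\mathbb{Z}\mathbb{C}$ with equal multiplicity is a nice supplement that the paper does not spell out. Neither route, of course, comes close to a proof, which is why the statement is a conjecture.
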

	
	\noindent Since the global root number of the twist is the conjectured sign in the functional equation relating $L(E/\mathbb{Q},\rho,s)$ and $L(E/\mathbb{Q},\rho^*,2-s)$, if $\rho$ is self-dual the parity of the order of vanishing of $L(E/\mathbb{Q},\rho,s)$ at $s=1$ is governed by $w(E/\mathbb{Q},\rho)$. For most elliptic curves, one expects the order of vanishing of the twisted $L$-function to be as small as is allowed by the functional equation. If $\rho$ is not self-dual, the functional equation does not tell us anything about the order of vanishing of $L(E/\mathbb{Q},\rho,s)$ at $s=1$. In this case, for most elliptic curves we expect the order of vanishing to be zero and that $\rho$ does not appear in $E(F)\otimes_\mathbb{Z}\mathbb{C}$.

	\begin{remark}
		If $\rho$ is a complex irreducible representation with Schur index $n$ that factors through $F/\mathbb{Q}$, the multiplicity of $\rho$ in $E(F)\otimes_\mathbb{Z}\mathbb{C}$ is divisible by $n$. The Schur index of an irreducible self-dual character can only be $1$ or $2$ by the Brauer--Speiser theorem \cite{fein, brauer,speiser}. So in the context of root numbers and the minimalist conjecture, Schur indices never force the multiplicity of an irreducible self-dual representation $\rho$ in $E(F)\otimes_\mathbb{Z}\mathbb{C}$ to be greater than $1$. 
	\end{remark}
	
	\begin{lemma}\label{coprimecond}
		Let $F$ be a Galois extension of $\mathbb{Q}$, let $\rho$ be an irreducible Artin representation that factors through $F/\mathbb{Q}$ and let $D$ be a non-zero integer. The minimalist conjecture for twists implies that for $100\%$ of elliptic curves $E/\mathbb{Q}$ with $(N_E,D)=1$,
		\vspace{-9pt}
		\begin{equation}
			\langle \rho,E(F)\otimes_\mathbb{Z}\mathbb{C} \rangle =\begin{cases} 
				0 &\text{ if } \rho \text{ is not self-dual,} \\
				0 &\text{ if } w(E/\mathbb{Q},\rho)=+1, \\
				1 &\text{ if } w(E/\mathbb{Q},\rho)=-1. 
			\end{cases} 
		\end{equation}
	\end{lemma}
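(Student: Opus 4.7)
The plan is to deduce Lemma \ref{coprimecond} from Conjecture \ref{minconj} by a purely measure-theoretic density argument: the condition $(N_E, D) = 1$ cuts out a subset of positive density among elliptic curves ordered by naive height, so a subset of density $0$ in all of $\{E/\mathbb{Q}\}$ remains of density $0$ when intersected with it.

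Write $S$ for the set of elliptic curves $E/\mathbb{Q}$ for which the stated equality fails (the ``bad'' curves for this fixed $F$ and $\rho$), put $T_D = \{E/\mathbb{Q} : (N_E, D) = 1\}$, and let $N(X)$ and $N_D(X)$ denote the counts of curves (respectively of curves in $T_D$) of naive height less than $X$. Conjecture \ref{minconj} asserts $\#\{E \in S : H(E) < X\} = o(N(X))$. The first step is to show $\liminf_{X \to \infty} N_D(X)/N(X) > 0$. For each prime $p \mid D$, the condition of good reduction at $p$ is given by congruence conditions on $(A,B) \bmod p^k$ for some bounded $k$ (explicitly, $p \nmid 4A^3 + 27B^2$ when $p \geq 5$, with a similar but slightly longer mod-$p^k$ description for $p \in \{2,3\}$). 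This carves out a nonempty open subset of $\mathbb{Z}_p^2$, and taking the intersection over the finite set of primes $p \mid D$ keeps the density strictly positive. A standard lattice-point count in the region $\max(4|A|^3, 27 B^2) < X$, combined with the fact that the ``minimality'' condition (no prime $p$ with $p^4 \mid A$ and $p^6 \mid B$) itself has positive density, then gives $N_D(X) \sim c \, N(X)$ for some constant $c = c(D) > 0$.

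The second step is a one-line comparison. Since $S \cap T_D \subseteq S$,
\begin{equation}
\frac{\#\{E \in S \cap T_D : H(E) < X\}}{N_D(X)} \;\leq\; \frac{\#\{E \in S : H(E) < X\}}{N(X)} \cdot \frac{N(X)}{N_D(X)},
\end{equation}
and the right-hand side tends to $0$ because the first factor does (by Conjecture \ref{minconj}) while the second is bounded by $1/c + o(1)$.

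The only genuine content is the density computation in the first step, so that is where any difficulty would lie. It is not really hard — for $p \geq 5$ the condition $p \nmid 4A^3 + 27B^2$ is a transparent Chebotarev-style density calculation on $\mathbb{Z}_p^2$ — but some care is needed for $p = 2, 3$ to combine the minimality condition with the Kodaira-type condition cutting out good reduction. Once positivity of the local densities is checked prime by prime, the finite product over $p \mid D$ immediately delivers $c > 0$ and the lemma follows.
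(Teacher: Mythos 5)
Your proposal is correct and follows essentially the same two-step approach as the paper: first establish that the set of curves with $(N_E,D)=1$ has positive density when ordered by naive height, then observe that intersecting a density-zero exceptional set with a positive-density set still gives density zero relative to the latter. The only difference is that the paper simply cites a known result (Cremona--Sadek, Theorem 4.2(2) of \cite{cremona}) for the positive density fact rather than sketching the local congruence computation at each $p\mid D$ as you do; both are fine, and your density comparison inequality matches the one in the paper.
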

	
	\begin{proof}
		It is known that, for any integer $D$, when ordered by the height of the coefficients there is a positive proportion of elliptic curves over $\mathbb{Q}$ whose conductor is coprime to $D$ (see e.g. Theorem 4.2(2) of \cite{cremona}; their ordering is slightly different, which may result in a different density, but we only need to know that this density is non-zero). In other words, $\lim_{X\to\infty}\frac{\#N}{\#T}=k>0$ where $T$ is the set of curves $E_{A,B}$ such that $H(E_{A,B})<X$ and $N\subset T$ is the set of elliptic curves $E_{A,B}$ such that $(N_E,D)=1$. Let $M\subset T$ be the set of curves $E_{A,B}$ that do not satisfy the conclusion of Conjecture \ref{minconj}, so that 
		\begin{equation}
			\frac{\#N\cap M}{\#N}\leq \frac{\#M}{\#N}=\frac{\#M}{\#T}\cdot \frac{\#T}{\#N}\rightarrow 0\cdot \frac{1}{k}=0 \quad \text{ as } \quad X\rightarrow \infty,
		\end{equation}
		where we have used Conjecture \ref{minconj} for $\#M/\#T\rightarrow 0$. This proves the lemma.
	\end{proof}
	
	\begin{lemma}\label{rootnodim}
		Let $E/\mathbb{Q}$ be an elliptic curve. Let $F$ be a Galois extension of $\mathbb{Q}$ that does not contain any quadratic number field. For any self-dual $\textup{Gal}(F/\mathbb{Q})$-representation $\rho$ whose conductor is coprime to $N_E$, 
		\begin{equation}
			w(E/\mathbb{Q},\rho)=w(E/\mathbb{Q})^{\textup{dim}\rho}. 
		\end{equation}
	\end{lemma}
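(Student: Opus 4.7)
The strategy is to reduce to the case $\det\rho=\mathds{1}$ and then apply Theorem \ref{rootnostwists} directly. The key observation is that the hypothesis on $F$ (no quadratic subfield) forces the determinant of any self-dual representation factoring through $F/\mathbb{Q}$ to be trivial, which makes the correction factor $\text{sign}(\alpha_\rho)\cdot\bigl(\tfrac{\alpha_\rho}{N_E}\bigr)$ in Theorem \ref{rootnostwists} disappear.

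First I would show that $\det\rho=\mathds{1}$. Since $\rho$ is self-dual, $\rho\cong\rho^*$, so $\det\rho=\det\rho^*=(\det\rho)^{-1}$, giving $(\det\rho)^2=\mathds{1}$. Thus $\det\rho$ is a character of order $1$ or $2$. If $\det\rho$ had order $2$, it would cut out a quadratic extension of $\mathbb{Q}$; because $\rho$ factors through $\textup{Gal}(F/\mathbb{Q})$, so does $\det\rho$, and this quadratic extension would sit inside $F$. This contradicts the hypothesis that $F$ contains no quadratic number field, so $\det\rho=\mathds{1}$.

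Next I would apply Theorem \ref{rootnostwists}. Under the convention in that theorem, $\det\rho=\mathds{1}$ gives $\alpha_\rho=1$, so $\text{sign}(\alpha_\rho)=1$ and $\bigl(\tfrac{\alpha_\rho}{N_E}\bigr)=\bigl(\tfrac{1}{N_E}\bigr)=1$. Substituting into the formula yields
\begin{equation}
w(E/\mathbb{Q},\rho)=w(E/\mathbb{Q})^{\dim\rho}\cdot 1\cdot 1=w(E/\mathbb{Q})^{\dim\rho},
\end{equation}
as desired. Theorem \ref{rootnostwists} requires the conductor of $\rho$ to be coprime to $N_E$, which is exactly the hypothesis we assumed.

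There is essentially no obstacle here: once the determinant is forced to be trivial, the conclusion is immediate. The only subtlety is verifying that the assumption of Theorem \ref{rootnostwists} applies (coprimality of conductors), which is part of the hypothesis of the lemma.
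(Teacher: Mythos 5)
Your proof is correct and follows the same route as the paper: the hypothesis that $F$ contains no quadratic subfield (equivalently, $\textup{Gal}(F/\mathbb{Q})$ has no index-$2$ subgroup) forces $\det\rho$ to be trivial, so $\alpha_\rho=1$ and Theorem \ref{rootnostwists} gives the result immediately. The only difference is that you spell out why $\det\rho$ has order at most $2$, which the paper leaves implicit.
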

	
	\begin{proof}
		Since $\text{Gal}(F/\mathbb{Q})$ has no index $2$ subgroup, in the notation of Theorem \ref{rootnostwists}, $\alpha_\rho=1$ for any self-dual $\text{Gal}(F/\mathbb{Q})$-representation $\rho$. So by Theorem \ref{rootnostwists} $w(E/\mathbb{Q},\rho)=w(E/\mathbb{Q})^{\text{dim}\rho}$.
	\end{proof}
	
	\begin{theorem}\label{minconjthm}
		Let $F$ be a Galois extension of $\mathbb{Q}$ that does not contain any quadratic number field. Write $W_G$ for the direct sum of the odd-dimensional irreducible self-dual representations of $G=\textup{Gal}(F/\mathbb{Q})$ and $k=\dim W_G$. Assuming the minimalist conjecture for twists, for $100\%$ of elliptic curves $E/\mathbb{Q}$ with $(N_E,\Delta_F)=1$
		\begin{equation}
			E(F)\otimes_\mathbb{Z}\mathbb{C}=\begin{cases}
				W_G &\text{ if } w(E/\mathbb{Q})=-1, \\
				0 & \text{ if }  w(E/\mathbb{Q})=+1.
			\end{cases} 
		\end{equation}
		In particular, for $100\%$ of elliptic curves $E/\mathbb{Q}$ with $(N_E,\Delta_F)=1$
		\begin{equation}
			\textup{rk}(E/F)=\begin{cases}
				k &\text{ if $w(E/\mathbb{Q})=-1$}, \\
				0 &\text{ if $w(E/\mathbb{Q})=+1$}.
			\end{cases}
		\end{equation} 
	\end{theorem}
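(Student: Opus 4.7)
The plan is to decompose $E(F)\otimes_\mathbb{Z}\mathbb{C}$ into its isotypic components as a $G$-representation and determine each multiplicity via the minimalist conjecture (Lemma \ref{coprimecond} handles the coprime conductor assumption) combined with Lemma \ref{rootnodim}. Fix an enumeration $\rho_1,\dots,\rho_N$ of the irreducible representations of $G$ and write
\begin{equation}
E(F)\otimes_\mathbb{Z}\mathbb{C}\;=\;\bigoplus_{i=1}^{N}\rho_i^{\oplus n_i(E)}.
\end{equation}
Since $G$ has only finitely many irreducibles and the minimalist conjecture applied to each $\rho_i$ (via Lemma \ref{coprimecond} with $D=\Delta_F$) controls $n_i(E)$ outside a set of density zero, it suffices to analyse the three cases below: a finite intersection of density-one sets has density one.

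First I would dispose of the non-self-dual representations. If $\rho_i$ is not self-dual then by minimalist, $n_i(E)=0$ for $100\%$ of $E$ with $(N_E,\Delta_F)=1$. Next, for any self-dual $\rho_i$, Lemma \ref{rootnodim} applies (the hypothesis that $F$ contains no quadratic subfield means $G$ has no index-$2$ subgroup, so the relevant $\alpha_{\rho_i}=1$) and gives
\begin{equation}
w(E/\mathbb{Q},\rho_i)=w(E/\mathbb{Q})^{\dim\rho_i}.
\end{equation}
Thus if $\dim\rho_i$ is even, $w(E/\mathbb{Q},\rho_i)=+1$ unconditionally, so the minimalist conjecture forces $n_i(E)=0$ for $100\%$ of such $E$. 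If $\dim\rho_i$ is odd, then $w(E/\mathbb{Q},\rho_i)=w(E/\mathbb{Q})$, so minimalist gives $n_i(E)=1$ when $w(E/\mathbb{Q})=-1$ and $n_i(E)=0$ when $w(E/\mathbb{Q})=+1$, again for $100\%$ of relevant $E$.

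Taking the intersection over the (finitely many) $i$'s yields a density-one subset of the elliptic curves with $(N_E,\Delta_F)=1$ on which all $n_i(E)$ take the stated values simultaneously. For curves in this set with $w(E/\mathbb{Q})=-1$ we get $E(F)\otimes_\mathbb{Z}\mathbb{C}\cong\bigoplus_{\dim\rho_i\,\text{odd, self-dual}}\rho_i=W_G$, and for those with $w(E/\mathbb{Q})=+1$ we get $E(F)\otimes_\mathbb{Z}\mathbb{C}=0$. Taking dimensions gives the rank statement. The only real subtlety is the density bookkeeping — ensuring we may combine finitely many $100\%$-statements into one — which is handled cleanly by Lemma \ref{coprimecond}; beyond that, the argument is a direct unwinding of Lemma \ref{rootnodim} and the minimalist conjecture.
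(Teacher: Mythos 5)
Your argument is correct and follows essentially the same route as the paper: dispose of non-self-dual irreducibles via the minimalist conjecture (through Lemma \ref{coprimecond}), use Lemma \ref{rootnodim} to reduce the self-dual root numbers to $w(E/\mathbb{Q})^{\dim\rho}$, and conclude that only the odd-dimensional self-dual irreducibles contribute. The only points the paper spells out that you leave implicit are the use of the conductor-discriminant formula to see that $(N_E,\Delta_F)=1$ forces $N_E$ coprime to the conductor of each $\rho$ (the hypothesis needed to invoke Lemma \ref{rootnodim}), and you correctly make explicit the finite-intersection-of-density-one-sets step which the paper passes over silently.
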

	
	\begin{proof}
		Let $\rho$ be a representation of $\text{Gal}(F/\mathbb{Q})$. By the conductor-discriminant formula, this ensures that the conductor of $\rho$ is coprime to $N_E$. If $\rho$ is not self-dual, by Lemma \ref{coprimecond} the minimalist conjecture for twists implies that $\rho$ does not appear in $E(F)\otimes_\mathbb{Z}\mathbb{C}$ for $100\%$ of elliptic curves $E/\mathbb{Q}$ with $(N_E,\Delta_F)=1$. If $\rho$ is self-dual, since $\text{Gal}(F/\mathbb{Q})$ has no index $2$ subgroup, by Lemma \ref{rootnodim} $w(E/\mathbb{Q},\rho)=w(E/\mathbb{Q})^{\text{dim}\rho}$ for $E/\mathbb{Q}$ any elliptic curve with $(N_E,\Delta_F)=1$. By Lemma \ref{coprimecond}, the minimalist conjecture for twists implies that, for $100\%$ of elliptic curves $E/\mathbb{Q}$ with $(N_E,\Delta_F)=1$,
		\begin{equation}
			\langle \rho,E(F)\otimes_\mathbb{Z}\mathbb{C} \rangle =\begin{cases} 
				1 &\text{ if $\rho$ has odd dimension and $w(E/\mathbb{Q},\rho)=-1$,} \\
				0 &\text{ otherwise},
			\end{cases}
		\end{equation}
		which gives us the result.
	\end{proof}
	
	\begin{remark}
		Theorem \ref{minconjthm} says that elliptic curves of rank $1$ over $\mathbb{Q}$ tend to gain many points over Galois extensions with no quadratic subfields, while elliptic curves of rank $0$ over $\mathbb{Q}$ tend not to obtain any new points. It suggests that the equation $y^2+y=x^3-x$ is `easy to solve' whereas the equation $y^2+y=x^3-x^2$ is `hard to solve' over most Galois extensions of $\mathbb{Q}$ with no quadratic subfield.
	\end{remark}
	
	\begin{remark}
		In the setting of Theorem \ref{minconjthm}, we expect even-dimensional irreducible Artin representations to be very rare inside Mordell--Weil groups.
	\end{remark}
	
	\begin{theorem}\label{galmod}
		Let $F$ be a Galois extension of $\mathbb{Q}$. Assuming the minimalist conjecture for twists, for $100\%$ of $E/\mathbb{Q}$ with $(N_E,2\Delta_F)=1$,  the Galois module $E(F)\otimes_\mathbb{Z}\mathbb{C}$ depends only on $N_E\pmod{8\Delta_F}$ and $w(E/\mathbb{Q})$.
	\end{theorem}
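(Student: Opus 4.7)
The plan is to combine the minimalist conjecture for twists with the explicit twisted-root-number formula of Theorem~\ref{rootnostwists}. Set $G=\textup{Gal}(F/\mathbb{Q})$ and decompose $E(F)\otimes_\mathbb{Z}\mathbb{C}=\bigoplus_\rho \rho^{\oplus m_\rho(E)}$, where $\rho$ runs over the finitely many irreducible representations of $G$. It suffices to show that every multiplicity $m_\rho(E)$ is determined by the pair $(w(E/\mathbb{Q}),\,N_E\bmod 8\Delta_F)$ for $100\%$ of $E$ with $(N_E,2\Delta_F)=1$.

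First I would invoke Lemma~\ref{coprimecond} with $D=2\Delta_F$: for each fixed irreducible $\rho$ there is a density-zero exceptional set $S_\rho$ among the curves with $(N_E,2\Delta_F)=1$, outside of which $m_\rho(E)=0$ if $\rho$ is not self-dual, and otherwise $m_\rho(E)\in\{0,1\}$ according to the sign of $w(E/\mathbb{Q},\rho)$. Since $G$ has only finitely many irreducible representations, $S:=\bigcup_\rho S_\rho$ still has density zero, so for $100\%$ of such curves the problem reduces to showing that each $w(E/\mathbb{Q},\rho)$ (for a self-dual irreducible $\rho$ of $G$) depends only on $w(E/\mathbb{Q})$ and $N_E\bmod 8\Delta_F$.

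For this I would apply Theorem~\ref{rootnostwists}:
\begin{equation}
    w(E/\mathbb{Q},\rho) \;=\; w(E/\mathbb{Q})^{\dim\rho}\cdot\textup{sign}(\alpha_\rho)\cdot \left(\tfrac{\alpha_\rho}{N_E}\right),
\end{equation}
in which $\dim\rho$, $\textup{sign}(\alpha_\rho)$ and the integer $\alpha_\rho$ itself are invariants of $\rho$ alone, and since $(N_E,2\Delta_F)=1$ the Jacobi symbol is the standard one with $N_E$ odd and coprime to $\alpha_\rho$. The key sub-claim is that $|\alpha_\rho|$ divides $\Delta_F$: if $\det\rho=\mathds{1}$ this is trivial since $\alpha_\rho=1$; otherwise $\det\rho$ is a nontrivial quadratic character cutting out a subfield $\mathbb{Q}(\sqrt{\alpha_\rho})\subseteq F$, so every prime ramified in $\mathbb{Q}(\sqrt{\alpha_\rho})$---equivalently every prime dividing the discriminant of $\mathbb{Q}(\sqrt{\alpha_\rho})$---must be ramified in $F$ and hence divide $\Delta_F$. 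Combined with the squarefreeness of $\alpha_\rho$ (and the fact that $2$ is ramified in $\mathbb{Q}(\sqrt{\alpha_\rho})$, forcing $2\mid\Delta_F$, whenever $\alpha_\rho\not\equiv 1\pmod 4$) this gives $|\alpha_\rho|\mid\Delta_F$ in every case. Standard periodicity of the Jacobi symbol (quadratic reciprocity and its supplements at $-1$ and $2$) then implies that $\bigl(\tfrac{\alpha_\rho}{N_E}\bigr)$ depends on odd $N_E$ coprime to $\alpha_\rho$ only modulo $4|\alpha_\rho|$, which divides $4\Delta_F$ and hence divides $8\Delta_F$.

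The main obstacle is this last step: nailing down the divisibility $|\alpha_\rho|\mid\Delta_F$ cleanly, handling the $2$-adic behaviour of $\mathbb{Q}(\sqrt{\alpha_\rho})$ separately from the odd-prime case, and then verifying that the Jacobi symbol's periodicity in $N_E$ is indeed captured by the modulus $8\Delta_F$ (the extra factor of $4$ beyond $|\alpha_\rho|$ coming from the reciprocity supplements, and absorbed by the factor of $8$ in $8\Delta_F$). Everything else---the reduction from the Galois module to multiplicities of irreducibles, and the finite union of density-zero exceptional sets---is bookkeeping.
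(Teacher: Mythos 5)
Your proposal is correct and follows essentially the same route as the paper's own proof: reduce via Lemma~\ref{coprimecond} to the claim that each twisted root number $w(E/\mathbb{Q},\rho)$ depends only on $w(E/\mathbb{Q})$ and $N_E\bmod 8\Delta_F$, invoke Theorem~\ref{rootnostwists}, observe that $\alpha_\rho\mid\Delta_F$ because $\mathbb{Q}(\sqrt{\alpha_\rho})\subseteq F$, and conclude by periodicity of the Jacobi symbol via quadratic reciprocity. Your version is slightly more explicit than the paper's in spelling out the finite union of density-zero exceptional sets and the exact modulus $4|\alpha_\rho|$, but these are exactly the steps the paper compresses into a few lines, and no new idea is introduced or missing.
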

	
	\begin{proof}
		By Lemma \ref{coprimecond}, it is enough to show that $w(E/\mathbb{Q},\rho)$ only depends on $N_E\pmod{8\Delta_F}$ and $w(E/\mathbb{Q})$ for every self-dual representation $\rho$ of $\text{Gal}(F/\mathbb{Q})$. Let $\rho$ be any self-dual representation of $\text{Gal}(F/\mathbb{Q})$ and suppose we have $E$ and $E'$ elliptic curves over $\mathbb{Q}$ with $N_E\equiv N_{E'}\pmod{8\Delta_F}$, $(N_E,2\Delta_F)=1=(N_{E'},2\Delta_F)$ and $w(E/\mathbb{Q})=w(E'/\mathbb{Q})$. By the conductor-discriminant formula this ensures that $N_E$ and $N_{E'}$ are coprime to the conductor of $\rho$, so we can apply Theorem \ref{rootnostwists} to calculate $w(E/\mathbb{Q},\rho)$. In the notation of Theorem~\ref{rootnostwists}, $\alpha_\rho\mid\Delta_F$. Since $N_E\equiv N_{E'}\pmod{8\alpha_\rho}$ and $N_E$ and $N_{E'}$ are odd, by quadratic reciprocity we have $ \left(\frac{\alpha_\rho}{N_E}\right)=\left(\frac{\alpha_\rho}{N_{E'}}\right)$. Hence, by Theorem \ref{rootnostwists}, $w(E/\mathbb{Q},\rho)=w(E'/\mathbb{Q},\rho)$.
	\end{proof}
	
	\begin{example}
		Let $F$ be the splitting field of the polynomial 
		\begin{equation}
			x^{10} + 5x^8 + 15x^6 + 20x^4 + 25x^2 + 15.
		\end{equation}
		Then $\text{Gal}(F/\mathbb{Q})=D_{10}$ and $\Delta_F=-3^{5}\cdot 5^{13}$. Theorem \ref{galmod} tells us that, for $100\%$ of elliptic curves $E/\mathbb{Q}$ with $(N_E,2\Delta_F)=1$, the Galois module structure of $E(F)\otimes_\mathbb{Z}\mathbb{Q}$ depends on $N_E$ modulo $8\cdot 3^{5}\cdot 5^{13}$ and $w(E/\mathbb{Q})$. In fact, we will show that it only depends on $N_E\pmod{15}$ and $w(E/\mathbb{Q})$.
		
		The representations of $\textup{Gal}(F/\mathbb{Q})$ are the trivial representation $\mathds{1}$, one $1$-dimensional representation $\epsilon$ that factors through $\mathbb{Q}(\sqrt{-15})$ and two $2$-dimensional representations $\rho_1$ and $\rho_2$, both of which have determinant~$\epsilon$. Thus when $(N_E,2\Delta_F)=1$, by Theorem \ref{rootnostwists}, 
		\begin{equation}
			w(E/\mathbb{Q},\epsilon)=\begin{cases}
				-w(E/\mathbb{Q}) &\text{ if } \left(\frac{-15}{N_E}\right)=+1, \\
				w(E/\mathbb{Q}) &\text{ if } \left(\frac{-15}{N_E}\right)=-1,
			\end{cases} \hspace{30pt} w(E/\mathbb{Q},\rho_i)=\begin{cases}
				-1 &\text{ if } \left(\frac{-15}{N_E}\right)=+1, \\
				+1 &\text{ if } \left(\frac{-15}{N_E}\right)=-1. 
			\end{cases}
		\end{equation}
		We have 
		\begin{equation}
			\left(\frac{-15}{N_E}\right)=\begin{cases}
				+1 &\text{ if } N_E\equiv 1, \ 2, \ 4 \text{ or } 8 \pmod{15}; \\
				-1 &\text{ if } N_E\equiv 7, \ 11, \ 13 \text{ or } 14 \pmod{15}.
			\end{cases}
		\end{equation}
		Note that when $(N_E,2\Delta_F)=1$ these congruence classes are the only possibilities for $N_E$ modulo $15$. Hence, assuming the minimalist conjecture for twists, for $100\%$ of elliptic curves $E/\mathbb{Q}$ with $(N_E,2\Delta_F)=1$ 
		
		\begin{equation}
			E(F)\otimes_\mathbb{Z}\mathbb{C}=\begin{cases}
				\epsilon\oplus\rho_1\oplus\rho_2 &\text{ if }N_E\equiv 1, \ 2, \ 4 \text{ or } 8 \pmod{15} \text{ and } w(E/\mathbb{Q})=+1; \\
				\mathds{1}\oplus\rho_1\oplus\rho_2 &\text{ if } N_E\equiv 1, \ 2, \ 4 \text{ or } 8 \pmod{15} \text{ and } w(E/\mathbb{Q})=-1; \\
				0 &\text{ if } N_E\equiv 7, \ 11, \ 13 \text{ or } 14 \pmod{15} \text{ and } w(E/\mathbb{Q})=+1; \\
				\mathds{1}\oplus\epsilon&\text{ if } N_E\equiv 7, \ 11, \ 13 \text{ or } 14 \pmod{15} \text{ and } w(E/\mathbb{Q})=-1.
			\end{cases}
		\end{equation}
	\end{example}

	\appendix
	
	\section{Local root numbers of elliptic curves over $\mathbb{Q}_2$}\label{rootnosat2table}
	
	The following table can be used to calculate $w(E/\mathbb{Q}_2)$, including when $E/\mathbb{Q}_2$ has additive reduction. It is based on Table III in \cite{rizzo}, which generalises results of Halberstadt \cite{halberstadt} to non-minimal Weierstrass equations. For a discussion on what can be done to calculate root numbers over extensions of $\mathbb{Q}_2$, see \S\ref{localrootnos}.
	
	\begin{notation}\label{cprime}
		Let $E/\mathbb{Q}_2$ be an elliptic curve given by a Weierstrass equation.
		\begin{itemize}
			\item Let $m=\text{min}\{\lfloor v_2(\Delta_E)/12 \rfloor, \lfloor v_2(c_6)/6 \rfloor, \lfloor v_2(c_4)/4 \rfloor\}$. Define
			\begin{equation}
				(C_{\Delta},C_6,C_4)=(v_2(\Delta_E)-12m,v_2(c_6)-6m,v_2(c_4)-4m). 
			\end{equation}
			In other words, take $(v_2(\Delta_E),v_2(c_6),v_2(c_4))$ and subtract multiples of $(12,6,4)$ entry-wise until reaching the smallest triple of non-negative integers $(C_\Delta,C_6,C_4)$. By convention, $v_2(0)=\infty$.
			\item $x'=x/2^{v_2(x)}$.
			\item $c_{6,e}=c_6/2^{v_2(c_6)-C_6+e}$; if $c_6=0$ then $c_{6,e}=0$. 
		\end{itemize}
	\end{notation}
	
	\begin{example}
		If $E:y^2=x^3-64x-128$ then $\Delta_E=2^{18}\cdot 37$, $c_6=2^{12}\cdot 3$ and $c_4=2^{10}\cdot 3$ so $(C_\Delta,C_6,C_4)=(6,6,6)$. We have $c_6'=3$ and the table below tells us that $w(E/\mathbb{Q}_2)=-1$. 
	\end{example}

	\begin{center}
		
		{\setlength{\LTleft}{-0.25in}
			\setlength{\LTright}{0in}
			\begin{longtable}{r r r | l | c }
				$C_\Delta$  & $C_6$ &  $C_4$ &   Extra conditions & $w(E/\mathbb{Q}_2)$  \\ 
				\hline\hline
				$0$ & $0$ & $0$  & $c_6'\equiv 3\pmod{4}$ & $+1$\\ 
				\hline
				$\geq 0$  & $0$ & $0$ & $c_6'\equiv 1\pmod{4}$ & $-1$\\
				\hline 
				$0$ & $3$ & $3$ & $c_4'\equiv 1\pmod{4}$ and $c_6'\equiv \pm 1\pmod{8}$ or $c_4'\equiv 3 \pmod{4}$ and $c_6'\equiv 1 \text{ or } 3 \pmod{8}$ & $+1$ \\
				\hline
				$0$& $3$& $3$ & $c_4'\equiv 1\pmod{4}$ and $c_6'\equiv 3 \text{ or } 5 \pmod{8}$ or $c_4'\equiv 3 \pmod{4}$ and $c_6'\equiv 5 \text{ or } 7 \pmod{8}$ & $-1$ \\
				\hline
				$0$ & $3$ & $\geq 4$ & $c_6'\equiv 1\pmod{4}$ & $+1$\\
				\hline
				$0$ & $3$ & $\geq 4$ & $c_6'\equiv 3\pmod{4}$ & $-1$ \\  
				\hline
				$0$ & $4$ & $2$ & $c_4'\equiv 1 \pmod{4}$ and $c_4'+4c_6'\equiv 9 \text{ or } 13\pmod{16}$ & $+1$ \\  
				\hline 
				$0$ & $4$ & $2$ & $c_4'\equiv 1 \pmod{4}$ and $c_4'+4c_6'\not \equiv 9 \text{ or } 13\pmod{16}$ & $-1$ \\  
				\hline
				$0$ & $\geq 4$ & $2$ &  $c_4'\equiv 3 \pmod{4} $ and $C_6=4$&
				$+1$ \\ 
				\hline 
				$0$ & $\geq 4$ & $2$ &  $c_4'\equiv 3 \pmod{4} $ and $C_6\neq 4$ & $-1$\\  
				\hline
				$0$ & $5$ & $2$ & $c_4'\equiv 1\pmod{4}$ and $c_4'+4c_{6,4}\equiv 5 \text{ or } 9 \pmod{16}$ & $-1$\\   
				\hline
				$0$ & $5$ & $2$ & $c_4'\equiv 1\pmod{4}$ and $c_4'+4c_{6,4}\not\equiv 5 \text{ or } 9 \pmod{16}$  & $+1$\\ 
				\hline
				$0$ & $\geq 6$ & $2$ & $c_4'\equiv 1\pmod{4}$ and $c_4'+4c_{6,4}\equiv 5 \text{ or } 9 \pmod{16}$ & $+1$\\   
				\hline
				$0$ & $\geq 6$ & $2$ & $c_4'\equiv 1\pmod{4}$ and $c_4'+4c_{6,4}\not\equiv 5 \text{ or } 9 \pmod{16}$  & $-1$\\ 
				\hline
				$\geq 1$ & $0$ & $0$ &  $c_6'\equiv 3\pmod{4}$ and $c_6'\equiv 3\pmod{8}$ & $+1$\\  
				\hline
				$\geq 1$ &  $0$ & $0$ & $c_6'\equiv 3\pmod{4}$ and $c_6'\not\equiv 3\pmod{8}$ & $-1$\\  
				\hline
				$1$ & $3$ & $2$ & $c_4'+4c_6'\equiv 3 \pmod{16}$ or $c_4'\equiv 11\pmod{16}$ & $+1$\\  
				\hline
				$1$ & $3$ & $2$ & $c_4'+4c_6'\not\equiv 3 \pmod{16}$ and $c_4'\not\equiv 11\pmod{16}$ & $-1$\\ 
				\hline
				$2$ & $3$ & $2$ & $\Delta_E'\equiv c_6'\pmod{4}$ & 
				$+1$\\  
				\hline
				$2$ & $3$ & $2$ & $\Delta_E'\not\equiv c_6'\pmod{4}$ & 
				$-1$\\  
				\hline
				$2$ & $4$ & $3$ & $c_4'+c_6'\equiv 0 \text{ or } 6\pmod{8}$  &
				$+1$ \\ 
				\hline 	
				$2$ & $4$ & $3$ & $c_4'+c_6'\not\equiv 0 \text{ or } 6\pmod{8}$  &
				$-1$ \\ 
				\hline
				$2$ & $4$ & $\geq 4$ & $c_6'\equiv 1\pmod{4}$ & $+1$ \\  
				\hline
				$2$  & $4$ & $\geq 4$ & $c_6'\not\equiv 1\pmod{4}$ & $-1$ \\  
				\hline
				$3$ & $3$ & $2$  & $\Delta_E'\equiv 3\pmod{4}$ & 
				$+1$ \\  
				\hline
				$3$ & $3$ & $2$ & $\Delta_E'\not\equiv 3\pmod{4}$ & 
				$-1$ \\  
				\hline
				$3$ & $5$ & $3$ & $2c_6'+c_4'\equiv 1\text{ or } 3\pmod{8}$ & $+1$ \\  
				\hline
				$3$ & $5$ & $3$ & $2c_6'+c_4'\equiv 5\text{ or } 7\pmod{8}$ & $-1$ \\ 
				\hline
				$3$ & $\geq 6$ & $3$ & $c_4'\equiv 5\text{ or } 7 \pmod{8}$ &
				$+1$ \\  
				\hline
				$3$ & $\geq 6$ & $3$ & $c_4'\equiv 1\text{ or } 3 \pmod{8}$ &
				$-1$ \\  
				\hline
				$\geq 4$ & $3$ & $2$ & $c_6'\equiv 3\pmod{4}$ & $+1$ \\  
				\hline
				$\geq 4$ & $3$ & $2$ & $c_6'\not\equiv 3\pmod{4}$ & $-1$ \\  
				\hline
				$4$ & $5$ & $4$ & $c_4'\equiv c_6'\pmod{4}$ and $c_4'\equiv 1\pmod{4}$ &
				$+1$ \\  
				\hline
				$4$ & $5$ & $4$ & $c_4'\equiv c_6'\pmod{4}$ and $c_4'\equiv 3\pmod{4}$ &
				$-1$ \\ 
				\hline
				$4$ & $5$ & $4$ & $c_4'\equiv 1\equiv -c_6'\pmod{4}$ and $c_4'c_6'\equiv 3\pmod{8}$ & 
				$+1$ \\  
				\hline
				$4$ & $5$ & $4$ & $c_4'\equiv 1\equiv -c_6'\pmod{4}$ and $c_4'c_6'\not\equiv 3\pmod{8}$ & 
				$-1$ \\
				\hline 
				$4$ & $5$ & $4$ & $c_6'\equiv 1\equiv -c_4'\pmod{4}$ & $-1$\\  
				
				\hline
				$4$ & $5$ & $5$ & $c_6'\equiv 1\pmod{4}$ and  $c_6'\equiv 5\pmod{8}$ &
				$+1$\\  
				\hline
				$4$ & $5$ & $5$ & $c_6'\equiv 1\pmod{4}$ and  $c_6'\equiv 1\pmod{8}$ &
				$-1$\\  
				\hline
				$4$ & $5$ & $\geq 5$ & $c_6'\equiv 3\pmod{4}$ and $C_4=5$ &
				$+1$\\  
				\hline
				$4$  & $5$ & $\geq 5$ & $c_6'\equiv 3\pmod{4}$ and $C_4\neq 5$ &
				$-1$\\ 
				\hline
				$4$ & $5$ & $\geq 6$ & $c_6'\equiv 1\pmod{4}$  &
				$-1$ \\  
				\hline
				$6$ & $6$ & $5$ & $c_4'\equiv 3\pmod{4}$ & $+1$\\ 
				\hline
				$6$ & $6$ & $5$ & $c_4'\equiv 1\pmod{4}$ & $-1$\\ 
				\hline
				$6$ & $6$ & $\geq 6$ & $c_6'\equiv 1\pmod{4}$; & $+1$\\
				\hline
				$6$ & $6$ & $\geq 6$ & $c_6'\equiv 3\pmod{4}$; & $-1$\\
				\hline
				$6$ & $\geq 7$ & $4$ & $c_4'\equiv 1\pmod{4}$ and $C_6=7$ & 
				$+1$\\
				\hline
				$6$ & $\geq 7$ & $4$ & $c_4'\equiv 1\pmod{4}$ and $C_6\neq 7$ & 
				$-1$\\
				\hline
				$6$ & $\geq 7$ & $4$  & $c_4'\equiv 3\pmod{4}$ and $c_4'-4c_{6,7}\equiv 7\text{ or }11\pmod{16}$ & $+1$\\
				\hline
				$6$ & $\geq 7$ & $4$ & $c_4'\equiv 3\pmod{4}$ and $c_4'-4c_{6,7}\not\equiv 7\text{ or }11\pmod{16}$ & $-1$ \\
				\hline 
				$7$ & $6$ & $4$  & $c_6'\equiv 5\text{ or } 5c_4'\pmod{8}$ &
				$+1$ \\  
				\hline 
				$7$ & $6$ & $4$   & $c_6'\not\equiv 5\text{ or } 5c_4'\pmod{8}$ &
				$-1$ \\ 
				\hline
				$8$ & $6$ & $4$ & $2c_6'+c_4'\equiv 3 \pmod{16}$ &
				$+1$ \\
				\hline
				$8$ & $6$ & $4$  & $2c_6'+c_4'\equiv 15 \pmod{16}$ &
				$-1$ \\
				\hline
				$8$ & $6$ & $4$ & $2c_6'+c_4'\equiv 23 \pmod{32}$  & $+1$ \\  
				\hline
				$8$ & $6$ & $4$  & $2c_6'+c_4'\equiv 7 \pmod{32}$  & $-1$ \\  
				\hline
				$8$ & $6$ & $4$ & $2c_6'+c_4'\equiv 11 \pmod{16} $ & 
				$-1$ \\  
				\hline
				$8$ & $7$ & $5$  & $2c_4'+c_6'\equiv 7 \pmod{8}$ or $c_6'\equiv 3\pmod{8}$ & $+1$\\  
				\hline
				$8$ & $7$ & $5$  & $2c_4'+c_6'\not\equiv 7 \pmod{8}$ and $c_6'\not\equiv 3\pmod{8}$ & $-1$\\ 
				\hline
				$8$ & $7$ & $6$ & $c_6'\equiv 1\pmod{4}$ and $2c_4'+c_6'\equiv 3 \pmod{8}$ &
				$+1$ \\  
				\hline
				$8$ & $7$ & $6$ & $c_6'\equiv 1\pmod{4}$ and $2c_4'+c_6'\not\equiv 3 \pmod{8}$ &
				$-1$ \\  
				\hline
				$8$ & $7$ & $\geq 6$ & $c_6'\equiv 3\pmod{4}$ and $C_4=6$ & $+1$\\  
				\hline
				$8$ & $7$ & $\geq 6 $ & $c_6'\equiv 3\pmod{4}$ and $C_4\neq 6$ & $-1$\\  
				\hline 
				$8$ & $7$ & $\geq 7$ & $c_6'\equiv 1\pmod{4}$ & 
				$-1$ \\  
				\hline
				$9$ & $6$ & $4$  & $2c_6'+c_4'\equiv 11 \pmod{32}$ or $c_6'\equiv 7\pmod{8}$ &  $+1$\\  
				\hline
				$9$ & $6$ & $4$  & $2c_6'+c_4'\not\equiv 11 \pmod{32}$ and $c_6'\not\equiv 7\pmod{8}$ &  $-1$ \\  
				\hline
				$9$ & $8$ & $5$ & $2c_6'+c_4'\equiv \pm 1 \pmod{8}$ & 
				$+1$ \\  
				\hline
				$9$ & $8$ & $5$  & $2c_6'+c_4'\equiv 3\text{ or } 5 \pmod{8}$ & 
				$-1$ \\  
				\hline
				$9$ & $\geq 9$ & $5$  & $c_4'\equiv 1\text{ or } 3\pmod{8}$ & $+1$ \\  
				\hline
				$9$ & $\geq 9$ & $5$  & $c_4'\equiv 5\text{ or } 7\pmod{8}$ & $-1$ \\ 
				\hline
				$10$ & $6$ & $4$ & $c_6'\equiv 1\pmod{4}$  & 
				$+1$\\  
				\hline
				$10$  & $6$ & $4$ & $c_6'\equiv 3\pmod{4}$ and $c_4'-2c_6'\equiv 3\text{ or } 19 \pmod{64}$ & $+1$\\  
				\hline
				$10$ & $6$ & $4$ & $c_6'\equiv 3\pmod{4}$ and $c_4'-2c_6'\not\equiv 3\text{ or } 19 \pmod{64}$ & $-1$\\ 
				\hline
				$10$ & $8$ & $6$  & $c_4'c_6'\equiv 3\pmod{4}$ & 
				$+1$ \\ 
				\hline
				$10$ & $8$ & $6$  & $c_4'c_6'\equiv 1\pmod{4}$ & 
				$-1$ \\  
				\hline
				$10$ & $8$ & $\geq 7$ & $c_6'\equiv 1\pmod{4}$ &
				$+1$ \\  
				\hline
				$10$  & $8$ & $\geq 7$  & $c_6'\equiv 3\pmod{4}$ &
				$-1$ \\  
				\hline
				$11$ & $6$ & $4$  & $c_6'\equiv 1\pmod{4}$ &
				$+1$ \\  
				\hline
				$11$ & $6$ & $4$  & $c_6'\equiv 3\pmod{4}$ and $c_6'\equiv 3\pmod{8}$ & $+1$\\  
				\hline
				$11$ & $6$ & $4$ & $c_6'\equiv 3\pmod{4}$ and $c_6'\equiv 7 \pmod{8}$ & $-1$ \\  
				\hline
				
			\end{longtable}
		}
	\end{center}
	\vspace{-18pt}
	There appear to be a couple of typos in Table III of \cite{rizzo}: it shows $(>0,0,0)$ instead of $(\geq 0,0,0)$ and the definition of $c_{6,e}$ is not consistent with Halberstadt's definition. We have checked that all possible cases for $(C_\Delta,C_6,C_4)$ are covered by the table, using the equation $1728\Delta_E=c_4^3-c_6^2$. We have also numerically checked that the table is consistent with the implementation in Magma \cite{magma} of Halberstadt's table in \cite{halberstadt} for over one million curves (including examples from each entry).

\end{document}